\theoremstyle{plain}
\newtheorem{theorem}{Theorem}[section]
\newtheorem{maintheorem}{Theorem}
\newtheorem{lemma}[theorem]{Lemma}
\newtheorem{proposition}[theorem]{Proposition}
\newtheorem{corollary}[theorem]{Corollary}
\newtheorem{maincorollary}[maintheorem]{Corollary}
\theoremstyle{definition}
\newtheorem{definition}{Definition}
\theoremstyle{remark}
\newtheorem{remark}[theorem]{Remark}
\newtheorem{claim}{Claim}[section]
\newcommand{\um}{{\bf 1}}
\newcommand{\leb}{\operatorname{Leb}}
\newcommand{\dist}{\operatorname{dist}}
\newcommand{\var}{\operatorname{var}}
\def\R{\ensuremath{\mathbb R}}
\def\N{\ensuremath{\mathbb N}}
\def\I{\ensuremath{{\bf 1}}}
\def\F{\ensuremath{\mathcal F}}
\def\B{\ensuremath{\mathcal BC}}
\def\L{\ensuremath{\mathcal L}}
\def\PP{\ensuremath{\mathbb P}}
\def\f{\ensuremath{F}}
\def\fb{\ensuremath{\overline{F}}}
\def\l0{\ensuremath{\leb_{\hat\gamma_0}}}
\def\ln{\ensuremath{\leb_{\hat\gamma_n}}}
\def\dist{\ensuremath{\mbox{dist}}}
\newcommand{\qand}{\quad\text{and}\quad}
\numberwithin{equation}{section}
\begin{document}

\title[Statistical stability and continuity of SRB entropy]{Statistical stability and continuity of SRB entropy  for systems with Gibbs-Markov structures}
\author[J. F. Alves]{Jos\'{e} F. Alves}
\address{Jos\'{e} F. Alves\\ Centro de Matem\'{a}tica da Universidade do Porto\\ Rua do Campo Alegre 687\\ 4169-007 Porto\\ Portugal}
\email{jfalves@fc.up.pt} \urladdr{http://www.fc.up.pt/cmup/jfalves}

\author[M. Carvalho]{Maria Carvalho}
\address{Maria Carvalho\\ Centro de Matem\'{a}tica da Universidade do Porto\\ Rua do
Campo Alegre 687\\ 4169-007 Porto\\ Portugal}
\email{mpcarval@fc.up.pt}

\author[J. M. Freitas]{Jorge Milhazes Freitas}
\address{Jorge M. Freitas\\ Centro de Matem\'{a}tica da Universidade do Porto\\ Rua do
Campo Alegre 687\\ 4169-007 Porto\\ Portugal}
\email{jmfreita@fc.up.pt}
\urladdr{http://www.fc.up.pt/pessoas/jmfreita}

\date{\today}
\thanks{}
\keywords{H\'enon attractor,  SRB measure, SRB entropy, statistical stability}
\subjclass[2000]{37A35, 37C40, 37C75, 37D25}

\begin{abstract} We present conditions on families of diffeomorphisms that guarantee statistical stability and SRB entropy continuity. They rely on the existence of horseshoe-like sets with infinitely many branches and variable return times.
As an application we consider  the family of H\'enon maps  within
the set of Benedicks-Carleson parameters.

\end{abstract}

\maketitle

\setcounter{tocdepth}{2}

\tableofcontents 

\section{Introduction}

A \textit{physical measure} for a smooth map $f:M\rightarrow M$ on
a manifold $M$ is a Borel probability measure $\mu$ on $M$ for
which there is a positive Lebesgue measure set of points $x\in M$,
called the \emph{basin} of~$\mu$, such that
\begin{equation}
\label{eq:def-physical-measure}
\lim_{n\rightarrow\infty}\frac{1}{n}\sum_{j=0}^{n-1}\delta_{f^j(x)}\stackrel{n\to \infty}\longrightarrow \mu
\end{equation}
in the weak* topology, where $\delta_z$ stands for the Dirac measure on $z\in M$. Sinai, Ruelle and Bowen showed the existence of physical measures for Axiom A smooth dynamical systems. These were obtained as \emph{equilibrium states} for the logarithm of the Jacobian along the unstable direction. Besides, such probability measures exhibit positive Lyapunov exponents and
conditionals which are absolutely continuous  with respect to Lebesgue measure on local unstable
leaves;
probability measures with the latter  properties are nowadays known as
\textit{Sinai-Ruelle-Bowen measures}\index{SRB measures} (\emph{SRB measures}, for
short).

Statistical properties and their stability have met with wide interest, particularly in the context of dynamical systems which  do not satisfy classical structural stability. This may be checked through the continuous variation of the SRB measures, referred in \cite{AV}  as  \textit{statistical stability}.
Another characterization of stability addresses the continuity of the metric entropy of SRB measures.
Although an old issue, going back to \cite{N} and \cite{Yom} for example, this continuity (topological or metric) is in general a hard problem. Notice that for families of smooth diffeomorphisms verifying the \emph{entropy formula}, see \cite{LY2}, and whose Jacobian along the unstable direction depends continuously on the map, the entropy continuity is an immediate consequence of the statistical stability. This holds for instance in the setting of Axiom~A attractors whose statistical stability was established in \cite{R2} and \cite{Man90}. The regularity of the SRB entropy for Axiom A flows was proved in \cite{C}.
Analiticity of metric entropy for Anosov diffeomorphisms was proved in \cite{Po}.

More recently, statistical stability for families of partially hyperbolic diffeomorphisms with non-uniformly expanding centre-unstable direction was established in \cite{Va}. Due to the continuous variation of the centre-unstable direction in the  partial hyperbolicity context, the entropy continuity follows as in the Axiom A case. Statistical stability for Hénon maps within Benedicks-Carleson parameters have been proved in \cite{ACF}; the entropy continuity for this family is a more delicate issue, since the lack of partial hyperbolicity, mostly due to the presence of ``critical'' points, originates a highly irregular behavior of the unstable direction.
In the endomorphism setting, many advances have been obtained for important families of maps, for instance in \cite{RS,T2,T1,AV,A,F,FT} concerning statistical stability, and in \cite{AOT} for the entropy continuity.  Actually, our main theorem may be regarded as a version for diffeomorphisms of the entropy continuity result in  \cite{AOT}.

In this work we give sufficient conditions on families of smooth diffeomorphisms for the statistical stability and the continuous variation
of the SRB entropies.  The families we study here, though having directions of non-uniform expansion, 
do not allow the approach of the hyperbolic case, since no continuity assumptions on these directions with the map will be assumed.
Instead, we consider diffeomorphisms admitting Gibbs-Markov structures as in \cite{Yo98} that may be thought as ``horseshoes'' with infinitely many branches and variable return times. This is mainly motivated by the important class of Hénon maps presented in the next paragraph. Our assumptions, which have a geometrical and dynamical nature, ensure in particular the existence of SRB measures. Gibbs-Markov structures were used in \cite{Yo98} to derive decay of correlations and the validity of the Central Limit Theorem for the SRB measure. Here we prove that under some additional uniformity requirements on the family we obtain statistical stability and SRB entropy continuity.

The major  application of our main result concerns the Benedicks-Carleson family of H\'enon maps,
\index{H\'enon maps}
\begin{equation} \label{def:Henon-map}f_{a,b}:
\begin{array}[t]{ccc}
\R^2 & \longrightarrow & \R^2\\
(x,y)& \longmapsto & (1-ax^2+y,bx).
\end{array}
\end{equation}
For small $b>0$ values, $f_{a,b}$
is strongly dissipative, and may be seen as an ``unfolded''
version of a quadratic interval map. It is known
that for small $b$ there is a trapping  region whose
topological attractor coincides with the closure of the
unstable manifold $W$ of a fixed point $z^*_{a,b}$ of $f_{a,b}$. In \cite{BC91} it was shown that for each sufficiently small $b>0$ there is
 a positive Lebesgue measure set
of parameters $a\in[1,2]$ for which $f_{a,b}$ has
a dense orbit in $\overline W$ with a positive Lyapunov exponent, which makes this a non-trivial and strange attractor. We denote by $\B$ the set of those parameters $(a,b)$ and call it
the \emph{Benedicks-Carleson family of H\'enon maps}.
As shown in \cite{BY93}, each of these non-hyperbolic attractors
supports a unique SRB measure $\mu_{a,b}$, whose main features were further studied
in
\cite{BY00,BV01,BV06}.  In \cite{BY00}
 a Gibbs-Markov structure was built for each $f_{a,b}$ with $(a,b)\in\B$, which has been used to obtain  statistical behavior  of H\"older observables. These structures have also been used in \cite{ACF} to  deduce the statistical stability of this family.
In this work we add the metric entropy continuity with respect to these measures.

 \subsection{Gibbs-Markov structure}\label{subsec:hypstructures}
Let $f\colon M\to M$ be $C^k$ diffeomorphism ($k\geq2$) defined on a
finite dimensional Riemannian manifold $M$, endowed with a
normalized volume form on the Borel sets that we denote by $\leb$ and call {\em
Lebesgue measure.} Given a submanifold $\gamma\subset M$   we use
$\leb_\gamma$ to denote the measure on $\gamma$ induced by the
restriction of the Riemannian structure to~$\gamma$.

An embedded disk $\gamma\subset M$ is called an {\em unstable
manifold} if $\dist(f^{-n}(x),f^{-n}(y))\to0$  as
$n\to\infty$ for every $x,y\in\gamma$. Similarly, $\gamma$ is called
a {\em stable manifold} if $\dist(f^{n}(x),f^{n}(y))\to0$
 as $n\to\infty$ for every $x,y\in\gamma$.

\begin{definition}
\label{def:cont-families-of-(un)stable-curves} Let
 $D^u$ be the  unit disk in some Euclidean space and
$\text{Emb}^1(D^u,M)$ be the space of $C^1$ embeddings from $D^u$
into $M$. We say that $\Gamma^u=\{\gamma^u\}$ is a \emph{continuous
family of $C^1$ unstable manifolds} if there is a compact set~$K^s$ and $\Phi^u\colon K^s\times
D^u\to M$ such that
\begin{itemize}
\item[i)] $\gamma^u=\Phi^u(\{x\}\times D^u)$ is an unstable
manifold;
\item[ii)] $\Phi^u$ maps $K^s\times D^u$ homeomorphically onto its
image; \item[iii)] $x\mapsto \Phi^u\vert(\{x\}\times D^u)$ defines a
continuous map from $K^s$ into $\text{Emb}^1(D^u,M)$.
\end{itemize}
Continuous families of $C^1$ stable manifolds are defined similarly.
\end{definition}


\begin{definition}
\label{def:horseshoe}
 We say that $\Lambda\subset M$ has a \emph{hyperbolic product
structure} if there exist a continuous family of unstable manifolds
$\Gamma^u=\{\gamma^u\}$ and a continuous family of stable manifolds
$\Gamma^s=\{\gamma^s\}$ such that
\begin{itemize}
    \item[i)] $\Lambda=(\cup \gamma^u)\cap(\cup\gamma^s)$;
    \item[ii)] $\dim \gamma^u+\dim \gamma^s=\dim M$;
    \item[iii)] each  $\gamma^s$ meets each $\gamma^u$  in exactly
    one point;
    \item[iv)] stable and unstable manifolds meet with angles
    larger than some $\theta>0$.
\end{itemize}
\end{definition}

Let $\Lambda\subset M$ have a hyperbolic product structure, whose
defining families are $\Gamma^s$ and $\Gamma^u$. A subset
$\Upsilon_0\subset \Lambda$ is called an {\em $s$-subset} if
$\Upsilon_0$ also has a hyperbolic product structure and its
defining families $\Gamma_0^s$ and $\Gamma_0^u$ can be chosen with
$\Gamma_0^s\subset\Gamma^s$ and $\Gamma_0^u=\Gamma^u$; {\em
$u$-subsets} are defined analogously. Given $x\in\Lambda$, let
$\gamma^{*}(x)$ denote the element of $\Gamma^{*}$ containing $x$,
for $*=s,u$. For each $n\ge 1$, let $(f^n)^u$ denote the restriction
of the map $f^n$ to $\gamma^u$-disks, and let $\det D(f^n)^u$ be the
Jacobian of $D(f^n)^u$. In the  sequel $C>0$ and $0<\beta<1$ are constants, and we require the following properties from the hyperbolic product
structure~$\Lambda$:

\renewcommand{\theenumi}{\textbf{P$_{\arabic{enumi}}$}}

\begin{enumerate}
\setcounter{enumi}{-1}
    \item
    \label{P-positive-measure}\emph{Positive measure}: for every $\gamma\in\Gamma^u$
    we have $\leb_\gamma(\Lambda\cap\gamma)>0$.
    \item
\label{P-Markov}  \emph{Markovian}: there are pairwise disjoint
$s$-subsets $\Upsilon_1,\Upsilon_2,\dots\subset\Lambda$ such
    that
    \begin{enumerate}
 \item $\leb_{\gamma}\big((\Lambda\setminus\cup\Upsilon_i)\cap\gamma\big)=0$ on each $\gamma\in\Gamma^u$;
 \item for each $i\in\N$ there is $\tau_i\in\N$ such that $f^{\tau_i}(\Upsilon_i)$ is a $u$-subset,
         and for all $x\in \Upsilon_i$
         $$
         f^{\tau_i}(\gamma^s(x))\subset \gamma^s(f^{\tau_i}(x))\qand
         f^{\tau_i}(\gamma^u(x))\supset \gamma^u(f^{\tau_i}(x));
         $$
 \item for each $n\in\N$ there are finitely many $i$'s with
 $\tau_i=n$.

    \end{enumerate}
\end{enumerate}

 \begin{enumerate}
 \setcounter{enumi}{1}
\item
\label{P-contraction-s-leaves} \emph{Contraction on  stable leaves}:
 for each $\gamma^s\in\Gamma^s$ and each $ y\in\gamma^s(x)$
 $$\dist(f^n(y),f^n(x))\le C\beta^n,\quad  \forall n\ge
 1. $$
 \end{enumerate}
For the last two properties we introduce the \emph{return time} $R\colon
\Lambda\to\N$ and the \emph{induced map}  $F=f^R\colon
\Lambda\to\Lambda$, which are defined for each
$i\in\N$ as $$
 R\vert_{\Upsilon_i}=\tau_i\qand
f^R\vert_{\Upsilon_i}=f^{\tau_i}\vert_{\Upsilon_i}, $$
and, for each   $x,y\in\Lambda$,
the {\em separation time} $s(x,y)$ is given by
$$
s(x,y)=\min\left\{n\ge0 \colon \textrm{$(f^R)^n(x)$ and $(f^R)^n(y)$
lie in distinct }\Upsilon_{i}'s
\right\}. $$

\begin{enumerate}
\setcounter{enumi}{2}
\item
\label{P-regularity-s-foliation} {\em Regularity of the stable
foliation}:
\begin{enumerate}
\item for $y\in\gamma^s(x)$ and $n\ge0$
  $$
    \log\prod_{i=n}^\infty\frac{\det Df^u(f^i(x))}{\det
        Df^u(f^i(y))}\le C\beta^n;$$
 \item given
$\gamma,\gamma'\in\Gamma^u$, we define
$\Theta\colon\gamma'\cap\Lambda\to\gamma\cap\Lambda$  by
$\Theta(x)=\gamma^s(x)\cap \gamma$. Then $\Theta$ is absolutely continuous and
        $$\displaystyle \frac{d(\Theta_*\leb_{\gamma'})}{d\leb_{\gamma}}(x)=
        \prod_{i=0}^\infty\frac{\det Df^u(f^i(x))}{\det
        Df^u(f^i(\Theta^{-1}(x)))};$$
  \item letting $v(x)$ denote the density in item (b),  we have
 $$\log \frac{v(x)}{v(y)}\le C\beta^{s(x,y)},\quad\text{for
 $x,y\in\gamma'\cap\Lambda$}.$$
\end{enumerate}
\end{enumerate}

\begin{enumerate}
\setcounter{enumi}{3}
 \item
 \label{P-bounded-distortion}
 {\em Bounded distortion}:
 for $\gamma\in\Gamma^u$ and
    $x,y\in\Lambda\cap\gamma$
    $$
    \log\frac{\det D(f^R)^u(x)}{\det D(f^R)^u(y)}\le
    C\beta^{s(f^R(x),f^R(y))}.
    $$
 \end{enumerate}

\begin{remark}\label{rem:LSY-slight-changes} We do not assume uniform backward
contraction along unstable leaves as (P4)(a) in \cite{Yo98}.
Properties \eqref{P-regularity-s-foliation}(c) and
\eqref{P-bounded-distortion} are new if comparing our setup to that in
\cite{Yo98}. However,
these are consequence of (P4) and  (P5) of
 \cite{Yo98} as done in \cite[Lemma~1]{Yo98}.\end{remark}

In spite of the uniform contraction on stable leaves demanded in \eqref{P-contraction-s-leaves}, this
is not too restrictive in systems having regions where the
contraction fails to be uniform, since we are allowed to remove
stable leaves, provided a subset with positive
measure of leaves remains in the end. This has been carried
out for H\'{e}non maps in \cite{BY00}.

\subsection{Uniform families}
\label{subsec:unif-fam-maps}
Let $\F$ be a a family of $C^k$ maps ($k\geq2$) from the finite
dimensional Riemannian manifold $M$ into itself, and endow $\F$ with
the $C^k$ topology. Assume that each map $f\in\F$ admits a Gibbs-Markov structure $\Lambda_f$ as described in
Section~\ref{subsec:hypstructures}. Let $\Gamma^u_f=\{\gamma^u_f\}$
and $\Gamma^s_f=\{\gamma^s_f\}$ be its defining families of unstable
and stable curves. Denote by $R_f:\Lambda_f\to\N$ the corresponding
return time function. 

Given
    $f_0\in\F$, take a sequence $f_n\in\F$ such that $f_n\to f_0$ in
    the $C^1$ topology as $n\to\infty$. For the sake of notational simplicity, for each $n\ge 0$ we will indicate the dependence of the previous objects  on $f_n$  just by means of the index or supra-index $n$.
    If $\gamma^u_n\in \Gamma^u_n$ is sufficiently close to $\gamma^u_0\in \Gamma^u_0$ in the $C^k$ topology,
    we may define a projection by
    sliding through the stable manifolds of $\Lambda_0$
    $$
    \begin{array}{cccc}
      H_n: & \gamma^u_{ n}\cap\Gamma^s_{0} & \longrightarrow & \gamma^u_{ 0} \\
      & z & \longmapsto & \gamma^s_0(z)\cap\gamma^u_{ 0}
    \end{array}
    $$
and set

 \begin{equation}\label{eq.omegan}
 \Omega_{0}=\gamma^u_{0}\cap\Lambda_{0},\quad\Omega_n^0=H_n^{-1}(\Omega_0)
 ,\quad\Omega_{n}=\gamma^u_{n}\cap\Lambda_{n},\quad\Omega^n_0=H_n(\Omega_n\cap\Omega_n^0).
 \end{equation}
Given $k\in\N$ and positive integers $i_1,\ldots,i_k$,
we denote by $\Upsilon_{i_1,\ldots,i_k}^n$ the $s$-sublattice that
satisfies $F^j_n(\Upsilon_{i_1,\ldots,i_k}^n)\subset\Upsilon_{i_j}^n$
for every $1\le j<k$ and
$F^k_n(\Upsilon_{i_1,\ldots,i_k}^n)=\Upsilon_{i_k}^n$.

\begin{definition}\label{def:uniform-family}
$\F$  is called a \emph{uniform family} if the conditions \eqref{u-uniform-constants}--\eqref{u-uniform-tail}
below hold:

\renewcommand{\theenumi}{\textbf{U$_{\arabic{enumi}}$}}

\begin{enumerate}
\setcounter{enumi}{-1}
    \item
\label{u-uniform-constants}
    \emph{Absolute constants:} the constants $C$ and
$\beta$   in
\eqref{P-contraction-s-leaves},\eqref{P-regularity-s-foliation} and
\eqref{P-bounded-distortion}
can be chosen the same for all
$f\in\F$.
\end{enumerate}

\begin{enumerate}
    \item
\label{u-prox-unstable-direction}
    \emph{Proximity of unstable leaves:} there are  unstable leaves
    $\hat\gamma_{0}\in\Gamma^u_{ 0}$ and
    $\hat\gamma_{ n}\in\Gamma_{ n}$ such that
    $\hat\gamma_{ n}\to\hat\gamma_{ 0}$ in the $C^1$
    topology as $n\to\infty$.
\end{enumerate}

\begin{enumerate}
\setcounter{enumi}{1}
\item
\label{u-matching-cantor-sets}
    \emph{Matching of structures:} defining the objects of \eqref{eq.omegan} with $\hat\gamma_n$ replacing $\gamma_n^u$, we have
    $$\ln\left(\Omega_{n}\triangle
    \Omega_n^0\right)\to0,\quad \text{as $n\to\infty$}.$$
    \item
\label{u-proximity-stable-direction} \emph{Proximity of stable
directions:} for every
    $z\in \Omega_0^n\cap\Omega_{0}$ we have $\gamma^s_{n}(z)\to
    \gamma^s_{0}(z)$ in the $C^1$ topology as $n\to\infty$.
\end{enumerate}
\begin{enumerate}
\setcounter{enumi}{3}
    \item
\label{u-matching-s-sublattices}
    \emph{Matching of s-sublattices:} given $N,k\in\N$ and
    $\Upsilon_{i_1,\ldots,i_k}^{ 0}$ with
    $R_{ 0}\big(\Upsilon_{i_j}^{ 0}\big)\leq N$ for $1\le j\le
    k$,
    there is
    $\Upsilon_{\ell_1,\ldots,\ell_k}^{ n}$ such that
    $R_{ n}\big(\Upsilon_{\ell_j}^{ n}
    \big)=R_{ 0}\big(\Upsilon_{i_j}^{ 0}\big)$ for
    $1\le j\le k$
    and
    $$\leb_{\hat\gamma_{ 0}}\left(H_n\big(
    \Upsilon_{\ell_1,\ldots,\ell_k}^{ n}\cap
    \hat\gamma_{ n}\big)\triangle\big(\Upsilon_{i_1,\ldots,i_k}^{ 0}
    \cap
    \hat\gamma_{ 0}\big) \right)\to 0,\quad\text{as $n\to\infty$.}$$
\end{enumerate}

\begin{enumerate}
\setcounter{enumi}{4}
    \item
\label{u-uniform-tail}
    \emph{Uniform tail}:  given $\varepsilon>0$, there are $N=N(\varepsilon)$
and $J=J(\varepsilon,N)$ such that
 $$
 \sum_{j=N}^\infty j\ln \{R_n=j\}<\varepsilon, \quad\forall n>J.
 $$
\end{enumerate}

\end{definition}
This last property ensures in particular that
$\int_{\hat\gamma_n}R_n\,d\leb_{\hat\gamma_n}<\infty$ for large $n$, which by
\cite[Theorem~1]{Yo98} implies
the existence of an SRB measure  for each $f_n$.

\renewcommand{\theenumi}{\arabic{enumi}}

  \begin{remark}
    \label{rem:Leb-Hm*ln-l0}
    Using that stable and unstable manifolds of $f_0$ meet with angles
    uniformly bounded away from zero at points in $\Lambda_0$, and the proximities given by \eqref{u-prox-unstable-direction} and \eqref{u-proximity-stable-direction}, it follows that there is some $\theta>0$ such that, for $n$ large enough, the stable manifolds through points in $\Omega_n^0$ meet $\hat\gamma_n$ with an angle bigger than $\theta$. Together with  \eqref{P-regularity-s-foliation} and \eqref{u-prox-unstable-direction}, this implies that:
    \begin{itemize}
      \item[\emph{i)}] $(H_n)_*\ln\ll\l0$ with uniformly bounded density;
      \item[\emph{ii)}] $\frac{d(H_n)_*\ln}{\l0}\to 1$
    on $L^1(\l0)$, as $n\to\infty$.
    \end{itemize}

\end{remark}

\subsection{Statement of results} %

Consider a family $\F$  such that each $f\in\F$ admits a unique
SRB measure $\mu_f$.
Letting $\PP(M)$ denote the space of probability measures on $M$ endowed
with the weak* topology, we say that $\F$ is
\emph{statistically stable} if the map
 \[\begin{array}{ccc}
                 \F & \longrightarrow &
                 $\PP(M)$\\
                    f  & \mathbf{\longmapsto} &
                    \mu_f,
                    \end{array}
\]
is continuous. In the sequel $h_{\mu_f}$ denotes the metric entropy
of $f$ with respect to the measure~$\mu_f$.
\begin{maintheorem}
Let $\F$ be a uniform family such that
each $f\in\F$ admits a unique SRB
measure. Then
\begin{enumerate}
  \item $\F$ is statistically stable;
\item $\F\ni f\mapsto h_{\mu_f}$ is
continuous.
\end{enumerate}
\end{maintheorem}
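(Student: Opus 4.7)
The plan is to represent each SRB measure $\mu_n$ via the Young tower construction on its induced Gibbs-Markov system, and then to compare $\mu_n$ with $\mu_0$ by splitting every relevant series into a finite-return-time head, handled by the matching conditions \eqref{u-prox-unstable-direction}--\eqref{u-matching-s-sublattices} together with Remark~\ref{rem:Leb-Hm*ln-l0}, and a tail controlled uniformly in $n$ by \eqref{u-uniform-tail}. To set up, for each $n$ I would take the $F_n$-invariant probability $\nu_n$ on $\Lambda_n$ whose conditionals on unstable leaves come from the density $v_n$ of \eqref{P-regularity-s-foliation}(b), set $\bar R_n=\int R_n\,d\nu_n$ (finite by \eqref{u-uniform-tail}), and use the Young representation together with the uniqueness hypothesis to identify
$$\mu_n=\frac{1}{\bar R_n}\sum_{j=0}^{\infty}(f_n^{\,j})_*\bigl(\nu_n\vert_{\{R_n>j\}}\bigr).$$
The bridging fact, following from \eqref{P-bounded-distortion}, \eqref{P-regularity-s-foliation} and the uniform constants of \eqref{u-uniform-constants}, is that $\nu_n(\Upsilon_{i_1,\dots,i_k}^{n})$ is comparable, up to a multiplicative constant independent of $n$, to $\leb_{\hat\gamma_n}(\Upsilon_{i_1,\dots,i_k}^{n}\cap\hat\gamma_n)$; this is what converts combinatorial matching into measure-theoretic matching.

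For part (1), fix $\phi\in C^0(M)$ and $\ve>0$, and pick $N=N(\ve)$ from \eqref{u-uniform-tail}. I would split the series above at index $N$. The tail $j\geq N$ is dominated by $\|\phi\|_\infty\int R_n\,\um_{\{R_n\geq N\}}\,d\nu_n$ which, by the bridging comparability, is bounded by a constant times $\sum_{j\geq N}j\,\ln\{R_n=j\}<\ve$ for all large $n$. For each $0\leq j<N$, \eqref{u-matching-s-sublattices} pairs every $\Upsilon_{i_1,\dots,i_k}^0$ with $R_0\leq N$ to a $\Upsilon_{\ell_1,\dots,\ell_k}^n$ of identical return pattern whose $\leb_{\hat\gamma_n}$-mass agrees in the limit; combined with \eqref{u-prox-unstable-direction}, \eqref{u-proximity-stable-direction} and Remark~\ref{rem:Leb-Hm*ln-l0}, this yields $(f_n^{\,j})_*(\nu_n\vert_{\Upsilon_{\ell_1,\dots,\ell_k}^n})\to(f_0^{\,j})_*(\nu_0\vert_{\Upsilon_{i_1,\dots,i_k}^0})$ weak-$*$. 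Summing over $j<N$ and adding the tail bound gives $\int\phi\,d\mu_n\to\int\phi\,d\mu_0$.

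For part (2), I would apply Abramov's formula $h_{\mu_n}(f_n)=h_{\nu_n}(F_n)/\bar R_n$. The convergence $\bar R_n\to\bar R_0$ comes from the same head/tail split applied to $\bar R_n=\sum_{j\geq 0}\nu_n\{R_n>j\}$. For the numerator, $\{\Upsilon_i^n\}$ is a one-sided Markov generator for $F_n$ (modulo stable leaves) with full branches, $\nu_n$ is SRB for $F_n$, so Pesin's entropy formula yields $h_{\nu_n}(F_n)=\int\log\det DF_n^u\,d\nu_n$. By \eqref{P-bounded-distortion}, the restriction of $\log\det DF_n^u$ to $\Upsilon_i^n$ differs from $-\log\leb_{\hat\gamma_n}(\Upsilon_i^n\cap\hat\gamma_n)$ by a uniformly bounded additive error, trading a sensitive geometric integrand for a combinatorial one directly matched by \eqref{u-matching-s-sublattices}. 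Splitting the cell sum at return-time level $N$: the head converges term-by-term via the matching, while for the tail I would use $|\log\det DF_n^u|\leq C\,R_n$ on each cell to dominate it by $C\sum_{j\geq N}j\,\ln\{R_n=j\}<C\ve$.

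The main obstacle will be the entropy convergence in part~(2): $\log\det DF_n^u$ does \emph{not} vary continuously in $n$, which is exactly why the hyperbolic-case argument breaks down. The way out, built into the hypotheses, is to invoke \eqref{P-bounded-distortion} to replace the Jacobian by combinatorial cell masses and \eqref{u-uniform-tail} to absorb the linear-in-return-time growth of the Jacobian on cells with large $R_n$; calibrating the head/tail split so that both $\bar R_n$ and $h_{\nu_n}(F_n)$ converge simultaneously is the delicate point.
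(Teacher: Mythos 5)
Your architecture (induce, saturate, split every series at a return-time level $N$, use \eqref{u-matching-s-sublattices} for the head and \eqref{u-uniform-tail} for the tail) is in the right spirit, but the two convergence statements the whole argument rests on are asserted rather than proved. The first gap is the convergence of the induced invariant measures. Your ``bridging fact'' --- that the $\nu_n$-mass of a cylinder is comparable, with constants uniform in $n$, to its $\leb_{\hat\gamma_n}$-mass --- does follow from \eqref{P-bounded-distortion} and Lemma~\ref{lem:small-adjust}, but comparability up to a fixed multiplicative constant $K$ is far from convergence: the invariant densities $\bar\rho_n$ all lie in $[K^{-1},K]$, and nothing you invoke prevents them from oscillating without converging to $\bar\rho_0$. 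The matching conditions \eqref{u-matching-cantor-sets} and \eqref{u-matching-s-sublattices} compare \emph{Lebesgue} masses of cylinders, not $\nu_n$-masses, so the claimed weak* convergence $(f_n^{\,j})_*(\nu_n\vert_{\Upsilon^{n}_{\ell_1,\dots,\ell_k}})\to(f_0^{\,j})_*(\nu_0\vert_{\Upsilon^{0}_{i_1,\dots,i_k}})$ does not follow from what you wrote. This missing step is precisely the core of the paper's proof of statistical stability: extract a weak* limit $\varrho_\infty$ of the densities by Banach--Alaoglu, show the limit measure is $\bar F_0$-invariant (Proposition~\ref{lem:rho-infty-invariance}, which needs Lemmas~\ref{lem:rel-integration-unstab-leaves} and~\ref{lem:bar-F0-bar-Fni-proximity}), and identify it with $\bar\rho_0$ by uniqueness of the absolutely continuous invariant measure (Corollary~\ref{co:conv-densities-weak*}). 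Note also that the measure you call $\nu_n$, ``with conditionals given by the density of \eqref{P-regularity-s-foliation}(b)'', is the \emph{reference} measure $\bar m_n$ lifted to leaves, not an $F_n$-invariant measure; the invariant one differs from it exactly by the factor $\bar\rho_n$ whose dependence on $n$ is the issue.

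The second gap is in the entropy part. Replacing $\log\det DF_n^u$ on $\Upsilon_i^n$ by $-\log\leb_{\hat\gamma_n}(\Upsilon_i^n\cap\hat\gamma_n)$ via \eqref{P-bounded-distortion} is valid only up to an additive error that is uniformly \emph{bounded} but does not tend to zero; integrating an $O(1)$ error over the head cells leaves an $O(1)$ discrepancy, so ``term-by-term convergence via the matching'' does not follow --- and even if it did, you would need convergence of $\nu_n$-masses of cells, which returns you to the first gap. The paper does not (and cannot) trade away the geometric integrand: after the reduction $h_{\mu_n}=\sigma_n^{-1}\int\log J\bar F_n\,d\bar\mu_n$ (Proposition~\ref{prop:entropy-formula}), it proves directly that $\log J\bar F_n\circ\hat\pi_n\circ H_n^{-1}$ converges to $\log J\bar F_0\circ\hat\pi_0$ in $L^1$ on cells of bounded return time, by writing $JF=|\det DF^u|\cdot\hat u\circ F/\hat u$, truncating the infinite product $\hat u$ with \eqref{P-regularity-s-foliation}(a), and using $C^1$ proximity of the maps and of the stable and unstable leaves (Remark~\ref{rem:chato}, Lemma~\ref{lem:control-E1-entropy}); the tail is absorbed by the bound $\log J\bar F_n\le C_2R_n$ (Lemma~\ref{lem:aux-lemma-bound-JF}) together with \eqref{u-uniform-tail} and the $L^1$ estimate $\int|R_n-R_0|\,d\leb_{\hat\gamma_0}\to0$ (Lemma~\ref{le.impu1}). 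Your tail treatment matches the paper's, but without a proof of convergence of the densities and of the Jacobians on matched cells the argument does not close.
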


\begin{maincorollary}\label{thm:statstab-entcont-henon}
The family $\B$ is statistically stable and the map  $\B \ni (a,b)  \mapsto h_{\mu_{a,b}}$  is continuous.
\end{maincorollary}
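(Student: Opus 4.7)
The plan is to deduce the Corollary directly from the Main Theorem by verifying that the Benedicks--Carleson family $\B$ meets its hypotheses. The existence and uniqueness of the SRB measure $\mu_{a,b}$ for each $(a,b)\in\B$ is already established in \cite{BY93}, and each $f_{a,b}$ with $(a,b)\in\B$ is equipped with a Gibbs--Markov structure $\Lambda_{a,b}$ satisfying (P0)--(P5) by the construction of Benedicks--Young in \cite{BY00}. What remains is to show that $\B$, with these structures, is a \emph{uniform family} in the sense of Definition~\ref{def:uniform-family}.

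For \eqref{u-uniform-constants} I would argue that the contraction rate along stable leaves, the bounded-distortion constant along unstable leaves, and the regularity constants of the stable holonomy produced in the Benedicks--Young construction can all be made uniform over $\B$ once $b$ is taken small enough, since they are controlled by the strong-dissipation parameter $b$ and by the expansion estimates used in the Benedicks--Carleson inductive construction. For \eqref{u-prox-unstable-direction} I would take $\hat\gamma_{a,b}$ to be a long local piece of the unstable manifold $W^u(z^*_{a,b})$ of the fixed point, which depends continuously on $(a,b)$ in the $C^1$ topology. For \eqref{u-proximity-stable-direction} I would use that the stable manifolds of $\Lambda_{a,b}$ are graphs obtained as the intersection of backward iterates of a uniform cone field with sufficient transversality; since cones and finite pre-images vary $C^1$-continuously with $(a,b)$, the local stable leaves through matched points converge in~$C^1$.

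For \eqref{u-matching-cantor-sets} and \eqref{u-matching-s-sublattices}, the key observation is that an $s$-sublattice $\Upsilon_{i_1,\ldots,i_k}^{0}$ whose pieces have return times bounded by $N$ is determined by at most $Nk$ iterates of $f_{a_0,b_0}$ acting on finitely many geometric objects (curves in the reference leaf, intersected with cones). For $(a,b)$ sufficiently close to $(a_0,b_0)$, one can then locate in $\Lambda_{a,b}$ an $s$-sublattice with the same return-time vector whose image under the holonomy $H_n$ differs from $\Upsilon_{i_1,\ldots,i_k}^{0}\cap\hat\gamma_0$ only on the boundaries of these finitely many pieces; since these boundaries have zero Lebesgue measure on $\hat\gamma_0$, the symmetric differences tend to zero. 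This combinatorial matching is exactly the content of the statistical stability proof in \cite{ACF}, and I would quote it here.

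Finally, for \eqref{u-uniform-tail} I would rely on the exponential tail estimate $\leb_{\hat\gamma_{a,b}}\{R_{a,b}>j\}\le C\theta^{j}$ established in \cite{BY00} with constants $C$ and $\theta<1$ that can be chosen uniformly on $\B$ (this uniformity is already used in \cite{ACF}); then $\sum_{j\ge N}j\,\leb_{\hat\gamma_{n}}\{R_n=j\}\le C\sum_{j\ge N}j\theta^{j}$, which is less than any prescribed $\varepsilon$ once $N$ is large, uniformly in $n$. Once \eqref{u-uniform-constants}--\eqref{u-uniform-tail} are all in place, the Main Theorem gives both statements of the Corollary simultaneously. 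The main obstacle is \eqref{u-matching-s-sublattices}: translating the geometric $C^1$-proximity of $f_{a,b}$ to $f_{a_0,b_0}$ into a matching of the \emph{symbolic labelling} of Gibbs--Markov rectangles requires a careful bookkeeping of the Benedicks--Young construction that is essentially already carried out in \cite{ACF}.
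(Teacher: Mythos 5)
Your proposal is correct and follows essentially the same route as the paper: the Corollary is obtained by applying the Main Theorem after checking that the Benedicks--Young construction in \cite{BY00} yields structures satisfying \eqref{P-positive-measure}--\eqref{P-bounded-distortion} and that the uniformity conditions \eqref{u-uniform-constants}--\eqref{u-uniform-tail} hold, with \eqref{u-prox-unstable-direction} coming from the hyperbolic fixed point and the remaining conditions quoted from \cite{ACF}, exactly as in the paper's reference table. Your additional heuristic sketches (uniform constants from strong dissipation, exponential tails, combinatorial matching of sublattices) are consistent with, and correctly deferred to, those references.
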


This corollary follows immediately after building Gibbs-Markov structures satisfying \eqref{P-positive-measure}--\eqref{P-bounded-distortion}, as was done in \cite{BY00}, and verifying the uniformity conditions \eqref{u-uniform-constants}--\eqref{u-uniform-tail}, as in \cite{ACF}. For the sake of clearness, the following list specifies exactly where each property is obtained.

\begin{center}
\begin{tabular}{|l|l|}
\hline
\eqref{P-positive-measure}&\cite[Proposition~A(3)]{BY00}\\
\hline
\eqref{P-Markov}&\cite[Proposition~A(1),(2)]{BY00}\\
\hline
\eqref{P-contraction-s-leaves}&\cite[Proposition~A(2)]{BY00}\\
\hline
\eqref{P-regularity-s-foliation}(a)&\cite[Sublemma~8]{BY00}\\
\hline
\eqref{P-regularity-s-foliation}(b)&\cite[Sublemma~10]{BY00}\\
\hline
\eqref{P-regularity-s-foliation}(c)& \cite[Sublemma~11]{BY00}\\
\hline
\eqref{P-bounded-distortion}& \cite[Sublemma~9]{BY00}\\
\hline
\eqref{u-uniform-constants}&\cite[Sections~6,7,8]{ACF}\\
\hline
\eqref{u-prox-unstable-direction}&Hyperbolicity of the fixed point $z^*$\\
\hline
\eqref{u-matching-cantor-sets}&
\cite[Section~6 in particular Corollary~6.4]{ACF}\\
\hline
\eqref{u-proximity-stable-direction}&\cite[Section~7 in particular Proposition~7.3]{ACF}\\
\hline
\eqref{u-matching-s-sublattices}&\cite[Section~8 in particular Proposition~8.9]{ACF}\\
\hline
\eqref{u-uniform-tail}&\cite[Proposition~A(4)]{BY00} \\
\hline
\end{tabular}
\end{center}
\medskip

 Concerning \eqref{u-uniform-constants} and \eqref{u-uniform-tail}, observe that the constants depend exclusively on the maximum value for $b>0$  and the minimum for  $a<2$  in the choice of Benedicks-Carleson parameters.

\section{Quotient dynamics and lifting back}\label{sec:construction-SRB}

In this section we shall analyze some dynamical features of a diffeomorphism $f$ admitting $\Lambda$ with a Gibbs-Markov structure that verifies properties \eqref{P-positive-measure}-\eqref{P-bounded-distortion}.
Consider a quotient space
$\bar{\Lambda}$ obtained by collapsing the stable curves of
$\Lambda$; i.e. $\bar{\Lambda}=\Lambda/\sim$, where $z\sim z'$ if
and only if $z'\in\gamma^s(z)$. Since by \eqref{P-Markov}(b) the
induced map $F=f^R:\Lambda\to\Lambda$ takes $\gamma^s$ leaves to
$\gamma^s$ leaves, then the \emph{quotient induced map}
$\overline{F}:\bar\Lambda\to\bar\Lambda$ is well defined and if
$\bar\Upsilon_i$ is the quotient of $\Upsilon_i$, then $\overline{F}$ takes
the sets $\bar\Upsilon_i$ homeomorphically onto $\bar\Lambda$.  Given an unstable leaf $\gamma$, the set
$\gamma\cap \Lambda$ suits as a model for $\bar{\Lambda}$ through the canonical projection $\bar\pi:\Lambda\to\bar\Lambda$. We will see in
Section~\ref{subsec:quotient-natural-measure} that we may define a natural
reference measure $\bar m$ on $\bar\Lambda$. Besides, $\overline{F}$ is
an expanding Markov map (see Lemma~\ref{lem:jac-def}), thus having an absolutely continuous (w.r.t $\bar m$),
$\overline{F}$-invariant probability measure~$\bar\mu$. Moreover,
if $\tilde\mu$ denotes the $\f$-invariant measure supported on
$\Lambda$ then $\bar\mu=\bar\pi_*(\tilde\mu)$.

  To build an SRB measure $\mu$ out of $\tilde\mu$ is just a matter of
saturating the measure $\tilde\mu$.
The existence of the measures $\bar\mu$, $\tilde\mu$ and the fact that
$\bar\mu=\bar\pi_*(\tilde\mu)$ follows from standard methods, which can be found for instance in \cite{Yo98}. For the sake of
completeness we will present the construction of the SRB measure, also having in mind how
some properties can be carried up through the lifting. We will
accomplish this by adapting some ideas used in the
construction of Gibbs states; see \cite{Bo75}.

\subsection{The natural measure}
\label{subsec:quotient-natural-measure} The purpose of this
subsection is to introduce a natural probability measure $\bar m$ on
$\bar\Lambda$  and establish some properties of the Jacobian of $\overline{F}$
with respect to $\bar m$. Moreover,
we show the existence of an $\overline{F}$-invariant density
$\bar\rho$ with respect to the measure $\bar m$.

Fix an arbitrary $\hat\gamma\in\Gamma^u$. The
restriction of $\bar\pi$ to $\hat\gamma\cap\Lambda$ gives a
homeomorphism that we denote by
$\hat\pi:\hat\gamma\cap\Lambda\to \bar\Lambda$.
Given $\gamma\in\Gamma^u$ and $x\in\gamma\cap\Lambda$ let~$\hat x$
be the point in $\gamma^s(x)\cap \hat\gamma$. Defining for
$x\in\gamma\cap\Lambda$
\begin{equation}\label{eq:def-u-hat}\hat
u(x)=\prod_{i=0}^\infty\frac{\det Df^u(f^i(x))}{\det
        Df^u(f^i(\hat x))}
\end{equation}
we have that $\hat u$ satisfies the bounded distortion property
\eqref{P-regularity-s-foliation}(c). For each $\gamma\in\Gamma^u$
let $m_\gamma$ be the measure in $\gamma$ such that
 $$\frac{dm_\gamma}{d\leb_\gamma}=\hat u\,\um_{\gamma\cap\Lambda},$$
where $\um_{\gamma\cap\Lambda}$  is the characteristic function of
the set ${\gamma\cap\Lambda}$. These measures have been defined in
such a way that if $\gamma,\gamma'\in\Gamma^u$ and $\Theta$ is
obtained by sliding along stable leaves from $\gamma\cap\Lambda$ to
$\gamma'\cap\Lambda$, then
 \begin{equation}\label{eq:bar-m-def} \Theta_*m_{\gamma}=m_{\gamma'}.
\end{equation}
To verify this let us show that the densities of these two measures
with respect to $\leb_{\gamma}$ coincide. Take
$x\in\gamma\cap\Lambda$ and  $x'\in\gamma'\cap\Lambda$ such that
$\Theta(x)=x'$.  By
 \eqref{P-regularity-s-foliation}(b) one has
   $$\frac{d\Theta_*\leb_{\gamma}}{d\leb_{\gamma'}}(x')=\frac{\hat u(x')}{\hat u(x)},$$
   which implies that
  $$\frac{d\Theta_*m_{\gamma}}{d\leb_{\gamma'}}(x')={\hat u(x)}
  \frac{d\Theta_*\leb_\gamma}{d\leb_{\gamma'}}(x')={\hat u(x')}=
 \frac{dm_{\gamma'}}{d\leb_{\gamma'}}(x').$$
 Conditions \eqref{P-positive-measure} and \eqref{eq:bar-m-def} allow
 us to define the reference probability measure $\bar m$ whose
 representative in each unstable leaf $\gamma\in\Gamma^u$ is
 exactly
 $\frac{1}{\leb_{\hat\gamma}(\Lambda)}m_\gamma$.

 Let $T:(X_1,m_1)\to(X_2,m_2)$ be a  measurable bijection
 between two probability measure spaces. $T$ is called
 \emph{nonsingular}  if it maps sets of zero $m_1$ measure to sets of
 zero $m_2$ measure. For a nonsingular transformation $T$ we define the
 Jacobian of $T$ with respect to $m_1$ and $m_2$, denoted by
 $J_{m_1,m_2}(T)$, as the Radon-Nikodym derivative
 $\frac{dT_*^{-1}(m_2)}{dm_1}$. By assertion~(1) of the following lemma
 it makes sense to consider the Jacobian of the quotient map $\overline{F}:(\overline\Lambda,\overline m)\to (\overline\Lambda,\overline m)$ that we simply denote $J\overline{F}$.

\begin{lemma}\label{lem:jac-def}  Assuming that
$F(\gamma\cap\Upsilon_i)\subset\gamma'$ for
$\gamma,\gamma'\in\Gamma^u$, let $JF(x)$ denote the Jacobian of
$F$ with respect to the measures $m_\gamma$ and $m_{\gamma'}$.
Then
\begin{enumerate}
    \item $JF(x)=JF(y)$ for every $y\in\gamma^s(x)$;
    \item there is $C_0>0$ such that for every
    $x,y\in\gamma\cap\Upsilon_i$
    $$
    \left|\frac{JF(x)}{JF(y)}-1\right|\le
    C_0\beta^{s(F(x),F(y))};
    $$
    \item for every $k\in\N$ and any $k$ positive integers
    $i_1,\ldots i_k$, there is $C_1>0$ such that for every $x,y\in
    \Upsilon_{i_1,\ldots,i_k}\cap\gamma$
    $$
    \left|\frac{JF^k(x)}{JF^k(y)}\right|\leq
    C_1.
    $$
\end{enumerate}
\end{lemma}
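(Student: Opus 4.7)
The plan is to first derive an explicit formula for $JF$ via the Radon--Nikodym definition and then extract each of the three assertions from that single expression. Since $dm_\gamma=\hat u\cdot\um_{\gamma\cap\Lambda}\,d\leb_\gamma$, a direct change of variables along unstable leaves gives, for $x\in\gamma\cap\Upsilon_i$ with $F(\gamma\cap\Upsilon_i)\subset\gamma'$,
\[
JF(x)=\frac{\hat u(F(x))}{\hat u(x)}\,\bigl|\det DF^u(x)\bigr|.
\]

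For assertion (1), I would take $y\in\gamma^s(x)$; since $\Upsilon_i$ is an $s$-subset we have $y\in\Upsilon_i$, and by \eqref{P-Markov}(b), $F(y)\in\gamma^s(F(x))$. Hence the projections to $\hat\gamma$ satisfy $\hat x=\hat y$ and $\widehat{F(x)}=\widehat{F(y)}$. Substituting this into the definition \eqref{eq:def-u-hat} and forming $JF(x)/JF(y)$, the factors at $\hat x$ and $\widehat{F(x)}$ cancel, and the remaining infinite products combine with $|\det DF^u(x)|/|\det DF^u(y)|=\prod_{n=0}^{\tau_i-1}\det Df^u(f^n(x))/\det Df^u(f^n(y))$ to yield $1$ by an index-shift telescoping.

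For assertion (2), I would decompose
\[
\log\frac{JF(x)}{JF(y)}=\log\frac{|\det DF^u(x)|}{|\det DF^u(y)|}+\log\frac{\hat u(F(x))}{\hat u(F(y))}-\log\frac{\hat u(x)}{\hat u(y)}
\]
and bound the three summands separately. The first is controlled directly by \eqref{P-bounded-distortion}, giving $C\beta^{s(F(x),F(y))}$. Identifying $\hat u$ with the density $v$ of \eqref{P-regularity-s-foliation}(b) corresponding to sliding onto $\hat\gamma$, the bound \eqref{P-regularity-s-foliation}(c) controls the other two summands by $C\beta^{s(F(x),F(y))}$ and $C\beta^{s(x,y)}$ respectively. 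Because $x,y\in\Upsilon_i$ forces $s(x,y)\ge s(F(x),F(y))+1$, all three contributions are absorbed into a single multiple of $\beta^{s(F(x),F(y))}$; exponentiating and applying $|e^t-1|\le 2|t|$ for $|t|$ small yields the desired estimate with some $C_0>0$.

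For assertion (3), I would iterate via the chain rule
\[
\frac{JF^k(x)}{JF^k(y)}=\prod_{j=0}^{k-1}\frac{JF(F^j(x))}{JF(F^j(y))}.
\]
At each step $F^j(x)$ and $F^j(y)$ lie on a common unstable manifold inside the $s$-sublattice $\Upsilon_{i_{j+1},\dots,i_k}$, so (2) applies and provides $|\log(JF(F^j(x))/JF(F^j(y)))|\le C_0\beta^{s(F^{j+1}(x),F^{j+1}(y))}$. Since $x,y\in\Upsilon_{i_1,\dots,i_k}$ forces $s(F^{j+1}(x),F^{j+1}(y))\ge k-j-1$, the geometric sum $\sum_{j=0}^{k-1}\beta^{k-j-1}\le(1-\beta)^{-1}$ bounds $|\log(JF^k(x)/JF^k(y))|$ by a constant independent of $k$, from which the desired $C_1$ is produced by exponentiation. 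The main obstacle is \emph{(2)}: one must carefully separate the three contributions to $JF$ and exploit the inequality $s(x,y)\ge s(F(x),F(y))+1$ so that the $\hat u$-distortion at both $\{x,y\}$ and $\{F(x),F(y)\}$ collapses into the single exponent driven by the separation time after the return; once this is secured, part (1) is algebraic telescoping and part (3) is a routine iteration.
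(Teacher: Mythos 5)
Your proposal is correct and follows essentially the same route as the paper: the same explicit formula $JF=\lvert\det DF^u\rvert\,\hat u\circ F/\hat u$, part (1) via equality of the projections $\hat x$, $\widehat{F(x)}$ along stable leaves, and part (2) via the same three-term decomposition controlled by \eqref{P-bounded-distortion}, \eqref{P-regularity-s-foliation}(c) and the inequality $s(x,y)>s(F(x),F(y))$. The only cosmetic difference is in (3), where you iterate the one-return estimate (2) along the orbit instead of applying \eqref{P-bounded-distortion} to $F^k$ directly and treating the two $\hat u$-terms with \eqref{P-regularity-s-foliation}(c); both give the same geometric series bound.
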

\begin{proof} (1) For $\leb_\gamma$ almost every $x\in\gamma\cap\Lambda$ we
 have
  \begin{equation}\label{eq:jacobians-rel}
  JF(x)=\left|\det D F^u(x)\right|\cdot \frac{\hat u(F(x))}{\hat u(x)}.
  \end{equation}
 Denoting $\varphi(x)=\log|\det Df^u(x)|$ we may write
 $$
 \begin{array}{lllll}
  \log JF(x) &=& \displaystyle\sum_{i=0}^{R-1}\varphi(f^i(x))&+&
  \displaystyle\sum_{i=0}^{\infty}\left(\varphi(f^i(F(x)))- \varphi(f^i(\widehat{F(x)})\right)\\
     & &   & - & \displaystyle\sum_{i=0}^{\infty}\left(\varphi(f^i(x))- \varphi(f^i(\hat
     x)\right)\\
     &=& \displaystyle\sum_{i=0}^{R-1}\varphi(f^i(\hat x))&+&
  \displaystyle\sum_{i=0}^{\infty}\left(\varphi(f^i(F(\hat x)))- \varphi(f^i(\widehat{F(x)})\right).
 \end{array}
  $$
Thus we have shown that $JF(x)$ can be expressed just in terms of
$\hat x$ and $\widehat{F(x)}$, which is enough for proving the
first part of the lemma.
\smallskip

 (2) It follows from \eqref{eq:jacobians-rel} that
  $$
  \log\frac{JF(x)}{JF(y)}=\log\frac{\det DF^u(x)}{\det
  DF^u(y)}+
  \log\frac{\hat u(F(x))}{\hat u(F(y))}+\log\frac{\hat u(y)}{\hat u(x)}.
  $$
Observing that $s(x,y)> s(F(x),F(y))$ the conclusion follows
from \eqref{P-regularity-s-foliation}(c) and
\eqref{P-bounded-distortion}.

(3) Again, from \eqref{eq:jacobians-rel}, we obtain
  $$
  \log\frac{JF^k(x)}{JF^k(y)}=\log\frac{\det D\left(F^k
  \right)^u(x)}{\det
  D\left(F^k
  \right)^u(y)}+
  \log\frac{\hat u(F^k(x))}{\hat u(F^k(y))}+
  \log\frac{\hat u(y)}{\hat u(x)}.
  $$
By \eqref{P-bounded-distortion} we have
$$\log\frac{\det D\left(F^k
  \right)^u(x)}{\det
  D\left(F^k
  \right)^u(y)}\leq\sum_{l=1}^k C\beta^{s(F^l(x),F^l(y))}\leq
C\sum_{l=0}^\infty \beta^l<\infty.$$
 The remaining terms are easily controlled once again due to \eqref{P-regularity-s-foliation}(c).
 \end{proof}

\begin{lemma}\label{lem:small-adjust}
The map $\overline{F}:\bar\Lambda\to\bar\Lambda$ has an invariant
probability measure $\bar\mu$ with $d\bar\mu=\bar\rho d\bar
m$, where $K^{-1}\leq\bar\rho\leq K$, for some $K=K(C_1,\beta)>0$.
\end{lemma}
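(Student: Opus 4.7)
The plan is to produce $\bar\mu$ via the transfer (Perron--Frobenius) operator $\mathcal{L}$ associated with $\overline{F}$ and $\bar m$, defined formally by
$$\mathcal{L}\varphi(x)=\sum_{\bar y\in \overline{F}^{-1}(x)}\frac{\varphi(\bar y)}{J\overline{F}(\bar y)}.$$
Since $\overline{F}$ restricted to each $\bar\Upsilon_i$ is a measurable bijection onto $\bar\Lambda$, and $J\overline{F}$ is the Radon--Nikodym derivative of $\overline{F}_*^{-1}\bar m$ with respect to $\bar m$ on each such piece, $\mathcal{L}$ is the dual of composition with $\overline{F}$ in $L^2(\bar m)$, and in particular an $\overline{F}$-invariant density is exactly a fixed point of $\mathcal{L}$.

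The key step is a uniform bound $K^{-1}\leq \mathcal{L}^n\um\leq K$ on $\bar\Lambda$ for every $n\in\mathbb{N}$, with $K=C_1$ given by Lemma~\ref{lem:jac-def}(3). To obtain it I would fix a cylinder $\Upsilon_{i_1,\ldots,i_n}$ and use the bijection $\overline{F}^n:\bar\Upsilon_{i_1,\ldots,i_n}\to\bar\Lambda$ together with the change of variables
$$1=\bar m(\bar\Lambda)=\int_{\bar\Upsilon_{i_1,\ldots,i_n}}J\overline{F}^n(y)\,d\bar m(y).$$
Picking any $y_0\in\bar\Upsilon_{i_1,\ldots,i_n}$ and applying Lemma~\ref{lem:jac-def}(3) yields
$$C_1^{-1}\bar m(\bar\Upsilon_{i_1,\ldots,i_n})\le\frac{1}{J\overline{F}^n(y_0)}\le C_1\,\bar m(\bar\Upsilon_{i_1,\ldots,i_n}).$$
Writing $\mathcal{L}^n\um(x)=\sum_{\bar y\in\overline{F}^{-n}(x)}1/J\overline{F}^n(\bar y)$, where each $\bar y$ lies in a distinct cylinder $\bar\Upsilon_{i_1,\ldots,i_n}$, and summing the previous estimate over all such cylinders (which partition $\bar\Lambda$ up to $\bar m$-null sets thanks to \eqref{P-Markov}), one obtains
$$C_1^{-1}=C_1^{-1}\sum_{i_1,\ldots,i_n}\bar m(\bar\Upsilon_{i_1,\ldots,i_n})\le \mathcal{L}^n\um(x)\le C_1\sum_{i_1,\ldots,i_n}\bar m(\bar\Upsilon_{i_1,\ldots,i_n})=C_1.$$

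Finally, the Cesàro averages
$$\bar\rho_n=\frac{1}{n}\sum_{k=0}^{n-1}\mathcal{L}^k\um$$
are contained in the order interval $[C_1^{-1},C_1]$ of $L^\infty(\bar m)\subset L^1(\bar m)$. By Banach--Alaoglu applied to this ball (which is weak*-sequentially compact because $L^1(\bar m)$ is separable), there is a subsequence $\bar\rho_{n_j}\to\bar\rho$ in the weak* topology of $L^\infty(\bar m)$, with the pointwise a.e.\ bounds $C_1^{-1}\le\bar\rho\le C_1$ preserved. A telescoping computation $\mathcal{L}\bar\rho_n-\bar\rho_n=(\mathcal{L}^n\um-\um)/n$ shows that the right-hand side tends to $0$ in norm, and since $\mathcal{L}$ is weak*-continuous (being the dual of $\varphi\mapsto\varphi\circ\overline{F}$ acting on $L^1(\bar m)$), passing to the limit gives $\mathcal{L}\bar\rho=\bar\rho$. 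Thus $\bar\mu=\bar\rho\,d\bar m$ is an $\overline{F}$-invariant probability measure with density in the desired range, and we take $K=C_1$. The only delicate point is the uniform distortion estimate feeding the bound on $\mathcal{L}^n\um$, which is exactly the content of Lemma~\ref{lem:jac-def}(3); everything else is standard functional-analytic manipulation.
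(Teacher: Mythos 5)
Your proposal is correct and takes essentially the same route as the paper: both bound the density of $\overline{F}^n_*\bar m$ (your $\mathcal{L}^n\mathbf{1}$) uniformly by $K^{\pm1}$ via bounded distortion on $n$-cylinders together with $\int_{\bar\Upsilon_{i_1,\ldots,i_n}}J\overline{F}^n\,d\bar m=\bar m(\bar\Lambda)$, and then extract an invariant density as a limit point of the Ces\`aro averages. The only differences are presentational: the paper works with push-forwards $\frac1n\sum_i \overline{F}^i_*\bar m$ and re-derives the distortion estimate from Lemma~\ref{lem:jac-def}(2) rather than quoting part (3), while you phrase the same computation through the transfer operator and make the weak* limiting step explicit.
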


\begin{proof}
We construct $\bar{\rho}$ as the density with respect to $\bar{m}$
of an accumulation point of
$\bar{\mu}^{(n)}=1/n\sum_{i=0}^{n-1}\fb_*^i(\bar{m})$. Let
$\bar{\rho}^{(n)}$ denote the density of $\bar{\mu}^{(n)}$ and
$\bar{\rho}^i$ the density of $\fb^i_*(\bar{m})$. Also, let
$\bar{\rho}^i=\sum_{j}\bar{\rho}_j^i$, where $\bar{\rho}_j^i$ is the
density of $\fb_*^i(\bar{m}|\sigma_j^i)$ and the $\sigma_j^i$'s
range over all components of $\bar{\Lambda}$ such that
$\fb^i(\sigma_j^i)=\bar{\Lambda}$.

Consider the normalized  density
$\tilde{\rho}_j^i=\bar{\rho}_j^i/\bar{m}(\sigma_j^i)$. We have for
$\bar{x}'\in\sigma_j^i$ such that $\bar{x}=\fb^i(\bar{x}')$ and for
some $\bar{y}'\in\sigma_j^i$
$$
\tilde{\rho}_j^i(\bar{x})=\frac{J\fb^i(\bar{y}')} {J\fb^i(\bar{x}')}
(\bar{m}(\bar{\Lambda}))^{-1}=\prod_{k=1}^{i}\frac{J\fb(\fb^{k-1}
(\bar{y}'))}{J\fb(\fb^{k-1}(\bar{x}'))}.
$$
By Lemma~\ref{lem:jac-def}(2) we have for every $k=1,\ldots,i$
\[
\frac{J\fb(\fb^{k-1} (\bar{y}'))}{J\fb(\fb^{k-1}(\bar{x}'))}\leq
\exp\left\{C_1\beta^{s\big(\fb^k(\bar y'),\fb^k(\bar
x')\big)}\right\}\leq \exp\left\{C_1\beta^{(i-k)+s(\bar x,\bar
y)}\right\},
\]
from where we conclude that
\[
\tilde{\rho}_j^i(\bar{x})\leq\exp\left\{C_1\beta^{s(\bar x,\bar
y)}\sum_{j\geq0}
\beta^j\right\}\leq\exp\left\{C_1/(1-\beta)\right\}=K.
\]
Observe that we also get
\[
\frac1{\tilde{\rho}_j^i(\bar{x})}=\frac{J\fb^i(\bar{x}')}
{J\fb^i(\bar{y}')} (\bar{m}(\bar{\Lambda}))\leq K,
\]
which yields $\tilde{\rho}_j^i(\bar{x})\geq K^{-1}$.
  Now, since \( \bar{\rho}^i=\sum_j
\bar{m}(\sigma_j^i)\tilde{\rho}_j^i\), we have
$K^{-1}\leq\bar{\rho}^i\leq K$ which implies that
$K^{-1}\leq\bar{\rho}^{(n)}\leq K$, from where we obtain that
$K^{-1}\leq\bar{\rho}\leq K$.
\end{proof}

\subsection{Lifting to the Gibbs-Markov
structure} \label{subsec:stat-stab-raising-measures} We now adapt standard techniques for lifting the $\fb$- invariant measure on the quotient
space  to an $\f$- invariant measure on the initial
Gibbs-Markov structure.

Given an $\fb$-invariant probability measure $\bar{\mu}$, we define
a probability measure $\tilde{\mu}$ on $\Lambda$ as follows. For
each bounded $\phi:\Lambda\rightarrow\R$  consider its
discretizations
$\phi^\bullet:\hat \gamma\cap\Lambda\rightarrow \R$ and
$\phi^*:\bar{\Lambda}\rightarrow \R$ defined by
\begin{equation}
\label{eq:def-discretization}
\phi^\bullet(x)=\inf\{\phi(z):z\in\gamma^s(x)\},\quad \mbox{and}\quad
\phi^*=\phi^\bullet\circ\hat\pi^{-1}.
\end{equation}
If $\phi$ is continuous, as its domain is compact, we may define
$$\var\phi(k)=\sup\left\{|\phi(z)-\phi(\zeta)|:|z-\zeta|\leq
C\beta^k\right\},$$ in which case $\var\phi(k)\rightarrow 0$
as $k\rightarrow\infty$.

\begin{lemma}
\label{lem:bowen-cauchy-sequence} Given any continuous
$\phi:\Lambda\rightarrow\R$,  for all $k,l\in\N$  we have
\[\left|\int(\phi\circ
F^k)^*d\bar{\mu}-\int(\phi\circ
F^{k+l})^*d\bar{\mu}\right|\leq \var\phi(k).
\]
\end{lemma}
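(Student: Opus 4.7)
The plan is to combine $\fb$-invariance of $\bar\mu$ with a pointwise comparison on $\bar\Lambda$. By invariance, $\int(\phi\circ F^k)^*\,d\bar\mu=\int(\phi\circ F^k)^*\circ\fb^l\,d\bar\mu$, so after integration it is enough to establish, for every $\bar x\in\bar\Lambda$,
$$0\le (\phi\circ F^{k+l})^*(\bar x)-(\phi\circ F^k)^*(\fb^l(\bar x))\le \var\phi(k).$$

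To unwind the two discretizations, fix $\bar x$, set $x=\hat\pi^{-1}(\bar x)\in\hat\gamma\cap\Lambda$, and observe that $\hat\pi^{-1}(\fb^l(\bar x))$ is the unique intersection of $\hat\gamma$ with $\gamma^s(F^l(x))$. Directly from \eqref{eq:def-discretization},
$$(\phi\circ F^{k+l})^*(\bar x)=\inf_{w\in\gamma^s(x)}\phi(F^{k+l}(w)),\qquad (\phi\circ F^k)^*(\fb^l(\bar x))=\inf_{z\in\gamma^s(F^l(x))}\phi(F^k(z)).$$
By the Markov property \eqref{P-Markov}(b), $F^l$ sends $\gamma^s(x)$ into $\gamma^s(F^l(x))$, so the first infimum ranges over a subset of the set indexing the second and is therefore not smaller; this gives the lower inequality.

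For the upper inequality, pick any $z\in\gamma^s(F^l(x))$. Since all points on a common stable leaf lie in the same $s$-subsets along their orbit, one can write $F^k=f^{R_k}$ uniformly on $\gamma^s(F^l(x))$, with $R_k\ge k$ as $R\ge 1$. Applying \eqref{P-contraction-s-leaves} to the pair $F^l(x),z\in\gamma^s(F^l(x))$ gives $\dist(F^{k+l}(x),F^k(z))\le C\beta^{R_k}\le C\beta^k$, whence $\phi(F^{k+l}(x))\le\phi(F^k(z))+\var\phi(k)$. Combining with the trivial bound $(\phi\circ F^{k+l})^*(\bar x)\le\phi(F^{k+l}(x))$ and taking the infimum over $z\in\gamma^s(F^l(x))$ finishes the pointwise comparison, and integration against $\bar\mu$ yields the lemma.

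The argument is largely routine; the one subtle point is recognising that the discretization of $\phi\circ F^k$ at the base point on $\hat\gamma$ corresponding to $\fb^l(\bar x)$ is an infimum over the \emph{entire} leaf $\gamma^s(F^l(x))$ rather than over the (generally smaller) image $F^l(\gamma^s(x))$. This asymmetry is precisely what produces the $\var\phi(k)$ gap via the uniform stable contraction \eqref{P-contraction-s-leaves}.
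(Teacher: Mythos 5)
Your proof is correct and follows essentially the same route as the paper: use $\fb$-invariance of $\bar\mu$, then compare the two discretizations pointwise via the inclusion $F^{k+l}(\gamma^s(x))\subset F^k(\gamma^s(F^l(x)))$ together with the stable contraction \eqref{P-contraction-s-leaves} and the fact that $F^k=f^{S_k}$ with $S_k\ge k$ on a stable leaf. The only (harmless) difference is that you record the signed inequality $0\le(\phi\circ F^{k+l})^*-(\phi\circ F^k)^*\circ\fb^l\le\var\phi(k)$, slightly sharper than the paper's absolute-value estimate.
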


\begin{proof}
Since $\bar{\mu}$ is $\fb$-invariant
\begin{equation*}
\begin{split}
\left|\int(\phi\circ F^k)^*d\bar{\mu}-\int(\phi\circ
F^{k+l})^*d\bar{\mu}\right|&= \left|\int(\phi\circ
F^k)^*\circ\fb^ld\bar{\mu}-\int(\phi\circ
F^{k+l})^*d\bar{\mu}\right|\\
&\leq \int\left|(\phi\circ F^k)^*\circ\fb^l-(\phi\circ
F^{k+l})^*\right|d\bar{\mu}.
\end{split}
\end{equation*}
By definition of the discretization we have
\[
(\phi\circ F^k)^*\circ\fb^l(x)=\inf\left\{\phi(z) :z\in
F^k\left(\gamma^s(\fb^l(x))\right)\right\}
\]
and
\[
(\phi\circ F^{k+l})^*(x)=\inf\left\{\phi(\zeta) :\zeta\in
F^{k+l}\left(\gamma^s\left(x\right)\right)\right\}.
\]
Observe that $F^{k+l}\left(\gamma^s\left(x\right)\right)\subset
F^k\left(\gamma^s(\fb^l(x))\right)$ and by
\eqref{P-contraction-s-leaves}
$$\mbox{diam}\, F^k\left(\gamma^s(\fb^l(x)
)\right)~\leq C\beta^{k}.$$  Thus, $\left|(\phi\circ
F^k)^*\circ\fb^l-(\phi\circ
F^{k+l})^*\right|\leq\var\phi(k)$.
\end{proof}
By the Cauchy criterion the sequence $\left(\int(\phi\circ
F^k)^*d\bar{\mu}\right)_{k\in\N}$ converges. Hence, Riesz
Representation Theorem yields a probability measure $\tilde{\mu}$ on
$\Lambda$
\begin{equation}
\label{eq:def-bowen-measure} \int\phi
d\tilde{\mu}:=\lim_{k\rightarrow\infty}\int(\phi\circ
F^k)^*d\bar{\mu}
\end{equation}
for every continuous function $\phi:\Lambda\rightarrow\R$.
\begin{proposition}
\label{prop:properties-bowen-measure} The probability measure
$\tilde{\mu}$ is $\f$-invariant and has absolutely continuous
conditional measures on $\gamma^u$  leaves. Moreover, given any
continuous $\phi:\Lambda\rightarrow\R$ we have
\begin{enumerate}

\item \label{properties-bowen-measure-2-varfik} $\left|\int\phi d\tilde{\mu}-\int(\phi\circ
F^k)^*d\bar{\mu}\right|\leq\var\phi(k)$;

\item \label{properties-bowen-measure-3-discrete}
if $\phi$ is constant in each $\gamma^s$, then $\int\phi
d\tilde{\mu}=\int\bar{\phi} d\bar{\mu}$, where
$\bar{\phi}:\bar{\Lambda}\rightarrow\R$ is defined by
$\bar{\phi}(x)=\phi(z)$, where $z\in\bar\pi^{-1}(x)$;

\item \label{properties-bowen-measure-4-discrete-times-cont}
if $\phi$ is constant in each $\gamma^s$ and
$\psi:\Lambda\rightarrow\R$ is continuous, then
$$\left|\int\psi.\phi d\tilde{\mu}-\int(\psi\circ F^k)^*(\phi\circ
F^k)^*d\bar{\mu}\right|\leq\|\phi\|_1\var\psi(k).$$

\end{enumerate}
\end{proposition}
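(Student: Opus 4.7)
Item~(1) is the tail-of-Cauchy consequence of Lemma~\ref{lem:bowen-cauchy-sequence}: its right-hand side $\var\phi(k)$ is independent of~$l$, while by~\eqref{eq:def-bowen-measure} the integral $\int(\phi\circ F^{k+l})^*d\bar\mu$ converges to $\int\phi\,d\tilde\mu$ as $l\to\infty$. The $\f$-invariance is then automatic, since shifting the index of a convergent sequence does not alter its limit:
\[\int\phi\circ F\,d\tilde\mu=\lim_{k\to\infty}\int(\phi\circ F^{k+1})^*d\bar\mu=\int\phi\,d\tilde\mu.\]
For item~(2), the Markov property~\eqref{P-Markov}(b) gives $F^k(\gamma^s)\subset\gamma^s$, so if $\phi$ is constant on each $\gamma^s$ then so is $\phi\circ F^k$; the infimum in the discretization is then trivial, $(\phi\circ F^k)^*=\bar\phi\circ\fb^k$ on $\bar\Lambda$, and the $\fb$-invariance of $\bar\mu$ from Lemma~\ref{lem:small-adjust} yields $\int(\phi\circ F^k)^*d\bar\mu=\int\bar\phi\,d\bar\mu$ for every $k$. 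Letting $k\to\infty$ and invoking item~(1) closes item~(2).

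Item~(3) is the computational core. Writing $c(\bar x):=\bar\phi\circ\fb^k(\bar x)$ for the common value of $\phi\circ F^k$ on $\gamma^s(\hat\pi^{-1}(\bar x))$, and using that \eqref{P-contraction-s-leaves} forces $F^k$ to contract each stable leaf to diameter $\le C\beta^k$, a comparison of $\inf(c\cdot\psi\circ F^k)$ with $c\cdot\inf(\psi\circ F^k)$ on $\{c\ge 0\}$ and $\{c<0\}$ gives the pointwise estimate
\[\bigl|(\psi\phi\circ F^k)^*-(\phi\circ F^k)^*(\psi\circ F^k)^*\bigr|(\bar x)\le |c(\bar x)|\,\var\psi(k).\]
To extract the fixed-$k$ bound claimed in item~(3), I would work with the sequence $c_k:=\int(\phi\circ F^k)^*(\psi\circ F^k)^*d\bar\mu=\int\bar\phi\circ\fb^k\,(\psi\circ F^k)^*d\bar\mu$: applying $\fb$-invariance of $\bar\mu$ converts $c_k-c_{k+l}$ into an integral of $\bar\phi\circ\fb^{k+l}$ against $(\psi\circ F^{k+l})^*-(\psi\circ F^k)^*\circ\fb^l$. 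The latter is nonnegative and bounded by $\var\psi(k)$, since the two infima are taken over nested subsets of the same stable leaf, on which $\psi\circ F^k$ oscillates by at most $\var\psi(k)$. Integration gives $|c_k-c_{k+l}|\le\var\psi(k)\int|\bar\phi|\,d\bar\mu=\var\psi(k)\|\phi\|_1$, where the last equality uses item~(2) applied to $|\phi|$; letting $l\to\infty$ and matching $\lim_l c_l$ with $\int\psi\phi\,d\tilde\mu$ via the displayed pointwise bound completes item~(3).

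The absolute continuity of the conditionals of $\tilde\mu$ on $\gamma^u$-leaves is the part I expect to be most delicate. The natural candidate conditional on $\gamma\in\Gamma^u$ is $(\hat\pi_\gamma)^{-1}_*\bar\mu$, where $\hat\pi_\gamma\colon\gamma\cap\Lambda\to\bar\Lambda$ is the analogue of $\hat\pi$ for the leaf~$\gamma$; its absolute continuity is immediate, because by the construction surrounding \eqref{eq:bar-m-def} each $m_\gamma$ has density $\hat u\,\mathbf{1}_{\gamma\cap\Lambda}$ with respect to $\leb_\gamma$, while $\bar\mu=\bar\rho\,d\bar m$ with $\bar\rho$ uniformly bounded by Lemma~\ref{lem:small-adjust}. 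The genuine work lies in identifying this candidate with the true conditional of the abstractly-defined $\tilde\mu$ from~\eqref{eq:def-bowen-measure}. My plan is to test $\tilde\mu$ against functions of the form $\psi\cdot(\chi\circ\bar\pi)$, use items~(2)--(3) to rewrite $\int\psi\phi\,d\tilde\mu$ as an iterated integral of $\chi$ against a $\bar m$-weighted average of $\psi$, and use the absolute continuity of the stable holonomy~\eqref{P-regularity-s-foliation}(b) to glue the conditionals consistently across the family $\Gamma^u$. Concretely, I expect to realise $(\hat\pi_\gamma)^{-1}_*\bar\mu$ as a weak-$*$ limit of normalised $F^k$-pushforwards of $\leb_{\hat\gamma}$ restricted to cylinders $\hat\gamma\cap\Upsilon_{i_1,\ldots,i_k}$, and then to transport absolute continuity up to $\tilde\mu$ via the bounded distortion~\eqref{P-bounded-distortion}.
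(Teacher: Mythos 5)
Your treatment of the $\f$-invariance and of items \eqref{properties-bowen-measure-2-varfik}--\eqref{properties-bowen-measure-4-discrete-times-cont} is correct and follows essentially the paper's route: item (1) and invariance from Lemma~\ref{lem:bowen-cauchy-sequence}, item (2) from $(\phi\circ F^k)^*=\bar\phi\circ\fb^k$ and $\fb$-invariance of $\bar\mu$, item (3) by comparing $\int(\psi\circ F^{k+l})^*\,\bar\phi\circ\fb^{k+l}\,d\bar\mu$ with $\int\big((\psi\circ F^k)^*\circ\fb^l\big)\,\bar\phi\circ\fb^{k+l}\,d\bar\mu$ and using that the two infima are over nested pieces of a stable leaf of diameter at most $C\beta^k$; your explicit handling of the sign issue in the factorization $(\psi\phi\circ F^k)^*=(\psi\circ F^k)^*(\phi\circ F^k)^*$ is in fact more careful than the paper, which uses that identity tacitly. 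The genuine gap is the absolute continuity of the conditional measures on unstable leaves. There you only state a plan: you propose to identify the conditional of $\tilde\mu$ on each $\gamma\in\Gamma^u$ with $(\hat\pi_\gamma)^{-1}_*\bar\mu$ and to realise it as a weak* limit of normalised $F^k$-pushforwards of $\leb_{\hat\gamma}$ on cylinders. That identification is a local-product-structure statement strictly stronger than what the proposition asserts, it is not a consequence of testing $\tilde\mu$ against functions $\psi\cdot(\chi\circ\bar\pi)$ via items (2)--(3) (those only determine certain integrals, not the disintegration: the marginal condition $\bar\pi_*\tilde\mu=\bar\mu$ says the conditionals project to $\bar\mu$ \emph{on average} over $\hat\mu$, not leaf by leaf), and conditional measures are not in general preserved under the weak* limits you invoke. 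None of these steps is carried out, so the absolute continuity claim remains unproved in your proposal.

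The paper's argument for this part is much softer and avoids identifying the conditionals altogether. Fix $\gamma^u\in\Gamma^u$ and $E\subset\gamma^u$ with $\leb_{\gamma^u}(E)=0$. Since $m_{\gamma^u}=\hat u\,\I_{\gamma^u\cap\Lambda}\leb_{\gamma^u}$, the set $\bar E=\bar\pi(E)$ satisfies $\bar m(\bar E)=0$, hence $\bar\mu(\bar E)=0$ by Lemma~\ref{lem:small-adjust}. Approximating $\I_{\bar E}$ by continuous functions $\bar\phi_n$, setting $\phi_n=\bar\phi_n\circ\bar\pi$ (which are constant on stable leaves) and applying item \eqref{properties-bowen-measure-3-discrete} together with dominated convergence gives $\tilde\mu\big(\bar\pi^{-1}(\bar E)\big)=0$; then Rokhlin disintegration with respect to the partition $\L$ into unstable leaves yields $\tilde\mu_{\gamma^u}\big(\bar\pi^{-1}(\bar E)\cap\gamma^u\big)=0$ for $\hat\mu$-almost every leaf, which is exactly the asserted absolute continuity. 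To close your proof you should either supply this saturation argument or genuinely prove the product-structure identification, which is substantially more work than the sentence you devote to it.
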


\begin{proof}
Regarding the $F$-invariance property, note that for any
continuous $\phi:\Lambda\rightarrow\R$,
\[
\int\phi\circ
Fd\tilde{\mu}=\lim_{k\rightarrow\infty}\int\left(\phi\circ
F^{k+1}\right)^*d\bar{\mu}=\int\phi d\tilde{\mu},
\]
by Lemma \ref{lem:bowen-cauchy-sequence}. Assertion
\eqref{properties-bowen-measure-2-varfik} is an immediate
consequence of Lemma \ref{lem:bowen-cauchy-sequence}. Property
\eqref{properties-bowen-measure-3-discrete} follows from
\[
\int\phi d\tilde{\mu}= \lim_{k\rightarrow\infty}\int\left(\phi\circ
F^k\right)^*d\bar{\mu}=\lim_{k\rightarrow\infty}\int
\bar{\phi}\circ\bar{F}^k d\bar{\mu}=\int \bar{\phi}d\bar{\mu},
\]
which holds by definition of $\tilde{\mu}$, $\phi^*$ and the
$\bar{F}$-invariance of $\bar{\mu}$.  For statement
\eqref{properties-bowen-measure-4-discrete-times-cont} let
$\bar{\phi}:\bar{\Lambda}\rightarrow\R$ be defined by
$\bar{\phi}(x)=\phi(z)$, where $z\in\bar\pi^{-1}(x)$.
For any
 $k,l$ positive integers observe
that
\[
\int(\psi.\phi\circ F^k)^*d\bar{\mu}=\int(\psi\circ F^k)^*(\phi\circ
F^k)^*d\bar{\mu}
\]
and
\begin{equation*}
\begin{split}
\left|\int(\psi\phi\circ F^{k+l})^*d\bar{\mu}-\int(\psi\phi\circ
F^k)^*d\bar{\mu}\right|&= \left|\int(\psi\circ
F^{k+l})^*\bar\phi\circ\bar F^{k+l}d\bar{\mu}-\int(\psi\circ
F^k)^*\bar\phi\circ\bar{F}^kd\bar{\mu}\right|\\
&\leq \int\left|(\psi\circ F^{k+l})^*-(\psi\circ
F^k)^*\circ\bar{F}^l\right||\phi\circ\bar F^{k+l}|d\bar{\mu}\\
&\leq \var\psi(k)\|\phi\|_1.
\end{split}
\end{equation*}
Inequality \eqref{properties-bowen-measure-4-discrete-times-cont}
follows letting $l$ go to $\infty$.

We are then left to verify the absolute continuity. While
the properties proved above are intrinsic to the lifting
technique, the disintegration into absolutely continuous conditional
measures on unstable leaves depends on the definition of the
reference measure $\bar{m}$ and the fact that
$\bar{\mu}=\bar{\rho}\bar{m}$. Fix an unstable leaf
$\gamma^u\in\Gamma^u$. Denote by $\lambda_{\gamma^u}$ the conditional Lebesgue measure
on $\gamma^u$. Consider a set
$E\subset\gamma^u$ such that $\lambda_{\gamma^u}(E)=0$. We will show
that $\tilde{\mu}_{\gamma^u}(E)=0$, where $\tilde{\mu}_{\gamma^u}$
denotes the conditional measure of
$\tilde{\mu}$ on $\gamma^u$, except for a few choices 
of $\gamma^u$. To be more precise, the family of curves $\Gamma^u$
induces a partition of $\Lambda$ into unstable leaves which we
denote by~$\L$. Let $\pi_\L:\Lambda\rightarrow \L$ be the natural
projection on the quotient space $\L$, i.e. $\pi_\L(z)=\gamma^u(z)$.
We say that $Q\subset\L$ is measurable if and only if
$\pi_\L^{-1}(Q)$ is measurable.  Let
$\hat{\mu}=(\pi_\L)_*(\tilde{\mu})$, which means that
$\hat{\mu}(Q)=\tilde{\mu}\left(\pi_\L^{-1}(Q)\right)$. We assume
that by definition of $\Gamma^u$ there is a non-decreasing sequence
of finite partitions $\L_1\prec\L_2\prec\ldots\prec\L_n\prec\ldots$
such that $\L=\bigvee_{i=1}^\infty\L_n$. Thus, by Rokhlin
disintegration theorem (see \cite[Appendix C.6]{BDV05}) there is a system
$\left(\tilde{\mu}_{\gamma^u}\right)_{\gamma^u\in\L}$ of conditional
probability measures of $\tilde{\mu}$ with respect to $\L$ such that
\begin{itemize}
\item $\tilde{\mu}_{\gamma^u}(\gamma^u)=1$ for $\hat{\mu}$- almost
every $\gamma^u\in\L$;

\item given any bounded measurable map
$\phi:\Lambda\rightarrow\R$, the map $\gamma^u\mapsto \int\phi
d\tilde{\mu}_{\gamma^u}$ is measurable and $\int\phi
d\tilde{\mu}=\int\left(\int\phi
d\tilde{\mu}_{\gamma^u}\right)d\hat{\mu}$.
\end{itemize}

Let $\bar{E}=\bar{\pi}(E)$. Since the reference measure $\bar{m}$
has a representative $m_{\gamma^u}$ on $\gamma^u$ which is
equivalent to $\lambda_{\gamma^u}$, we have $m_{\gamma^u}(E)=0$ and
$\bar{m}(\bar{E})=0$. As $\bar{\mu}=\bar{\rho} \bar{m}$, then
$\bar{\mu}(\bar{E})=0$. Let
$\bar{\phi}_n:\bar{\Lambda}\rightarrow\R$ be a sequence of
continuous functions such that $\bar{\phi}_n\rightarrow
\I_{\bar{E}}$ as $n\rightarrow\infty$. Consider also the sequence of
continuous functions $\phi_n:\Lambda\rightarrow\R$ given by
$\phi_n=\bar{\phi}_n\circ\bar{\pi}$. Clearly $\phi_n$ is constant in
each $\gamma^s$ stable leaf and
$\phi_n\rightarrow\I_{\bar{E}}\circ\bar{\pi}=
\I_{\bar{\pi}^{-1}(\bar{E})}$ as $n\rightarrow\infty$. By Lebesgue
dominated convergence theorem we have
$\int\phi_nd\tilde{\mu}\rightarrow\int
\I_{\bar{\pi}^{-1}(\bar{E})}d\tilde{\mu}=
\tilde{\mu}\left(\bar{\pi}^{-1}(\bar{E})\right)$ and
$\int\bar{\phi}_nd\bar{\mu}\rightarrow \int\I_{\bar{E}}d\bar{\mu}=
\bar{\mu}(\bar{E})=0$. By
\eqref{properties-bowen-measure-3-discrete} we have
$\int\phi_n\tilde{\mu}=\int\bar{\phi}_nd\bar{\mu}$. Hence, we must
have $\tilde{\mu}\left(\bar{\pi}^{-1}(\bar{E})\right)=0$.
Consequently,
\[
0=\int\I_{\bar{\pi}^{-1}(\bar{E})}d\tilde{\mu}=
\int\left(\int\I_{\bar{\pi}^{-1}(\bar{E})}
d\tilde{\mu}_{\gamma^u}\right)d\hat{\mu}(\gamma^u),
\]
which implies that
$\tilde{\mu}_{\gamma^u}\left(\bar{\pi}^{-1}(\bar{E})\cap\gamma^u\right)=0$
for $\hat{\mu}$-almost every $\gamma^u$.
\end{proof}

\begin{remark}
\label{rem:propeties-bowen-L1} Since the continuous functions are
dense in $L^1$, properties
\eqref{properties-bowen-measure-3-discrete} and
\eqref{properties-bowen-measure-4-discrete-times-cont} also hold when $\phi\in L^1$, by dominated convergence.
\end{remark}

\subsection{Entropy formula}
\label{subsec:stat-stab-saturation-measures} Let $\tilde{\mu}$ be the
SRB measure for $F$ obtained from
$\bar{\mu}=\bar{\rho}\bar{m}$ as in~\eqref{eq:def-bowen-measure}.
We define the saturation of $\tilde{\mu}$ by
\begin{equation}
\label{eq:saturation-definition} \mu^*=\sum_{l=0}^\infty
f^l_*\left(\tilde{\mu}|\{R>l\}\right).
\end{equation}
It is well known that $\mu^*$ is $f$-invariant and that the
finiteness of $\mu^*$ is equivalent to $\int R\,d\tilde{\mu}=\int
R\,d\bar{\mu}<~\infty$. By construction of and $\bar{m}$ and $\bar{\mu}$, the finiteness of $\mu^*$ is also
equivalent to $\int_{\gamma\cap\Lambda} R\,d\leb_\gamma<~\infty$.
Clearly, each $f^l_*\left(\tilde{\mu}|\{R>l\}\right)$  has
absolutely continuous conditional measures on $\{f^l\gamma^u\}$,
which are Pesin unstable manifolds.
Consequently $$\mu=\frac1{\mu^*(M)}\mu^*$$ is an SRB
measure for $f$.

\begin{lemma} \label{lem:lyapunov-rel} If $\lambda$ is a Lyapunov exponent of $\tilde\mu$,
 then $\lambda/\sigma$ is a Lyapunov
exponent of $\mu$, where $\sigma=\int_\Lambda R d\tilde\mu$.
 \end{lemma}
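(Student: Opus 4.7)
The plan is to exploit the Abramov-type relation $F=f^R$ together with the Multiplicative Ergodic Theorem. First I would clarify how $\mu$ and $\tilde\mu$ relate inside $\Lambda$: since $R$ is the first return time to $\Lambda$, the sets $f^l(\{R>l\})$ for $l\ge 1$ are disjoint from $\Lambda$, so the only contribution to $\mu^*$ on $\Lambda$ comes from the index $l=0$, giving $\mu^*|_\Lambda=\tilde\mu$. Combined with the standard identity $\mu^*(M)=\sum_{l\ge 0}\tilde\mu\{R>l\}=\int R\,d\tilde\mu=\sigma$, normalization yields $\mu|_\Lambda=\tilde\mu/\sigma$, so $\tilde\mu$-typical points in $\Lambda$ are automatically $\mu$-typical.

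Next I would apply Birkhoff's theorem to the integrable function $R$ and the (ergodic) $F$-invariant probability $\tilde\mu$: for $\tilde\mu$-a.e.\ $x\in\Lambda$,
$$\frac{S_nR(x)}{n}:=\frac{1}{n}\sum_{j=0}^{n-1}R\bigl(F^j(x)\bigr)\longrightarrow\sigma\quad\text{as } n\to\infty.$$
Writing $m_n=S_nR(x)$, one has $F^n(x)=f^{m_n}(x)$ and $DF^n(x)=Df^{m_n}(x)$. For a nonzero vector $v$ in an Oseledets subspace of $(f,\mu)$ at $x$, with $f$-Lyapunov exponent $\lambda^f(x,v)=\lim_{m\to\infty}\frac{1}{m}\log\|Df^m(x)v\|$, I would decompose
$$\frac{1}{n}\log\|DF^n(x)v\|=\frac{m_n}{n}\cdot\frac{1}{m_n}\log\|Df^{m_n}(x)v\|$$
and pass to the limit, obtaining $\sigma\cdot\lambda^f(x,v)$ as the corresponding Lyapunov exponent of $(F,\tilde\mu)$.

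Reading the identity in reverse, if $\lambda$ is a Lyapunov exponent of $\tilde\mu$ realised along $v$ at a point $x$ from a positive $\tilde\mu$-measure set, then $\frac{1}{m_n}\log\|Df^{m_n}(x)v\|\to\lambda/\sigma$. Since the Multiplicative Ergodic Theorem applied to $(f,\mu)$ guarantees the existence of the full $m$-limit $\lambda^f(x,v)$, any subsequential limit must agree with it, forcing $\lambda^f(x,v)=\lambda/\sigma$, as claimed. The delicate point in the argument is precisely this matching of the two Oseledets filtrations at $x$, which is settled by the uniqueness of the subspace decomposition together with the existence of the $f$-growth rate along all of $\N$ (and not merely along $\{m_n\}$). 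If one wishes to avoid the ergodicity assumption on $\tilde\mu$, the same reasoning applies on each ergodic component, with $\sigma$ replaced by the component-wise average of $R$.
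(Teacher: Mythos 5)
Your argument is essentially the paper's: write $F^n(z)=f^{S_n(z)}(z)$ with $S_n(z)=\sum_{j=0}^{n-1}R(F^j(z))$, use Birkhoff's theorem for the ergodic measure $\tilde\mu$ to get $S_n(z)/n\to\sigma$, and rescale the growth rates, the Oseledets limit for $(f,\mu)$ pinning down the subsequential limit along the times $S_n(z)$. One inaccuracy: your claim that the sets $f^l(\{R>l\})$, $l\ge 1$, are disjoint from $\Lambda$ (hence $\mu^*|_\Lambda=\tilde\mu$) presupposes that $R$ is the \emph{first} return time to $\Lambda$, which is not part of the Gibbs--Markov hypotheses --- $R$ is only a return time with the Markov property \eqref{P-Markov}, and iterates before time $R$ may well re-enter $\Lambda$. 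This does not damage the proof, because all you actually use is that $\tilde\mu$-typical points of $\Lambda$ are $\mu$-typical, and that already follows from the $l=0$ term of the saturation \eqref{eq:saturation-definition}, which gives $\mu^*\ge\tilde\mu$ on $\Lambda$ and hence $\tilde\mu\ll\mu|_\Lambda$; this is in substance how the paper argues, via $\mu(\Lambda)>0$ and ergodicity.
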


 \begin{proof}As $\mu$ is obtained by saturating $\tilde\mu$ in~\eqref{eq:saturation-definition}, one easily gets
$\mu^*(\Lambda)\geq\tilde\mu(\Lambda)=1$, and so $\mu(\Lambda)>0$.
By ergodicity, it is enough to compare the Lyapunov exponents for
points $z\in\Lambda$.
 Let $n$ be a positive integer.  We have for each $z\in\Lambda$
  $$F^n(z)=f^{S_n(z)}(z),\quad\text{where $S_n(z)=
  \sum_{i=0}^{n-1}R(F^i(z))$}.$$
 As $S_n(z) = S_n(\zeta)$ for Lebesgue almost every $z\in \Lambda$
 and $\zeta$
 close to $z$, we have for
 $v\in T_zM$
  \begin{equation}\label{eq.ly}
  \frac{1}{S_n(z)}\log\|Df^{S_n(z)}(z)v\|=
  \frac{n}{nS_n(z)}\log\|DF^{n}(z)v\|.
  \end{equation}
Since $\tilde\mu$ is ergodic, Birkhoff ergodic theorem yields
 \begin{equation}\label{eq.bi}
 \lim_{n\to\infty}\frac{S_n(z)}{n}=\int_\Lambda R\,d\tilde\mu=\sigma
 \end{equation}
for $\tilde\mu$ almost every $z\in\Lambda$.
\end{proof}

\begin{proposition}\label{prop:entropy-formula}
Let $J\bar F$ be the
Jacobian of $\bar F$ with respect to the
measure $\bar m$ on $\bar\Lambda$. Then
\begin{equation*}
\label{eq:entropy-formula} h_{\mu}=\sigma^{-1}\int_{\bar\Lambda}\log
J\bar Fd\bar m.
\end{equation*}
\end{proposition}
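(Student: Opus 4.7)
The plan is to combine three standard ingredients: Abramov's theorem for induced systems, entropy-preservation under the factor projection $\bar\pi\colon\Lambda\to\bar\Lambda$, and Rokhlin's entropy formula for the expanding Markov map $\bar F$. As a preliminary reduction, I would observe that
\[
\mu^*(M)=\sum_{l=0}^\infty\tilde\mu(\{R>l\})=\int_\Lambda R\,d\tilde\mu=\sigma,
\]
so $\mu=\sigma^{-1}\mu^*$, and Abramov's formula for induced systems yields $h_{\tilde\mu}(F)=\sigma\cdot h_\mu(f)$. This reduces the proof to the computation of $h_{\tilde\mu}(F)$.

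Next I would show $h_{\tilde\mu}(F)=h_{\bar\mu}(\bar F)$. The fibers of $\bar\pi$ are contained in stable leaves, and by \eqref{P-contraction-s-leaves} the map $F$ contracts these leaves exponentially. Consequently, for any finite measurable partition $\xi$ of $\Lambda$, the forward refinement $\bigvee_{i=0}^{n-1}F^{-i}\xi$ has elements whose stable diameters tend uniformly to zero, so that the pulled-back $\sigma$-algebra $\bar\pi^{-1}\mathcal B_{\bar\Lambda}$ together with one such refinement generates the full Borel $\sigma$-algebra modulo $\tilde\mu$-null sets. A classical argument of Rokhlin–Sinai type then gives that the conditional entropy $h_{\tilde\mu}(F\mid\bar\pi^{-1}\mathcal B_{\bar\Lambda})$ vanishes, whence the two entropies agree.

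Finally I would apply Rokhlin's entropy formula to the expanding Markov map $\bar F\colon(\bar\Lambda,\bar\mu)\to(\bar\Lambda,\bar\mu)$. The partition $\{\bar\Upsilon_i\}$ is one-sided generating, and Lemma~\ref{lem:jac-def}(2)--(3) supplies the distortion control required, yielding
\[
h_{\bar\mu}(\bar F)=\int_{\bar\Lambda}\log J_{\bar\mu}\bar F\,d\bar\mu,
\]
where $J_{\bar\mu}\bar F$ denotes the Jacobian of $\bar F$ with respect to $\bar\mu$. Since $d\bar\mu=\bar\rho\,d\bar m$ with $\bar\rho$ bounded between $K^{-1}$ and $K$ by Lemma~\ref{lem:small-adjust}, a direct calculation yields the coboundary identity $\log J_{\bar\mu}\bar F=\log J\bar F+\log\bar\rho-\log\bar\rho\circ\bar F$, and the last two terms cancel upon integration against the $\bar F$-invariant measure $\bar\mu$. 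Thus $h_{\bar\mu}(\bar F)=\int_{\bar\Lambda}\log J\bar F\,d\bar\mu$, and combining this with the previous two steps produces the claimed entropy formula.

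The step I expect to be most delicate is the factor-map identity $h_{\tilde\mu}(F)=h_{\bar\mu}(\bar F)$. Although uniform stable contraction forces the conditional entropy to vanish in principle, a rigorous verification in this Gibbs-Markov setting requires constructing measurable generating partitions compatible with the hyperbolic product structure of $\Lambda$ and carefully handling the boundaries of the $s$-subsets $\Upsilon_i$ so that refinements under $F^{-i}$ do not destroy the Markov structure.
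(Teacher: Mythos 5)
Your strategy (Abramov for the induced system, entropy preservation under the stable-leaf quotient, then Rokhlin's formula for $\bar F$) is genuinely different from the paper's proof, which never touches generating partitions: the paper applies the entropy formula of \cite{LY2} to the SRB measure $\mu$, rescales the Lyapunov exponents of $\tilde\mu$ by $\sigma$ via Lemma~\ref{lem:lyapunov-rel}, identifies $\sum_{\tilde\lambda_i>0}\tilde\lambda_i\dim E_i$ with $\int\log\det DF^u\,d\tilde\mu$ by Oseledets, and then transfers the integral to the quotient using the cocycle identity \eqref{eq:jacobians-rel} and Proposition~\ref{prop:properties-bowen-measure}(\ref{properties-bowen-measure-3-discrete}). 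Your outline, however, has genuine gaps at two of its three steps as written.

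The serious one is the Rokhlin step. You assert that $\{\bar\Upsilon_i\}$ is one-sided generating, but nothing in \eqref{P-positive-measure}--\eqref{P-bounded-distortion} gives this: by Remark~\ref{rem:LSY-slight-changes} the authors deliberately drop Young's backward contraction along unstable leaves, so there is no expansion estimate forcing the cylinders $\bar\Upsilon_{i_1,\ldots,i_k}$ to shrink to points (equivalently, finiteness a.e.\ of the separation time is not among the hypotheses), and the distortion bounds of Lemma~\ref{lem:jac-def} are no substitute. Moreover, even granting a generator, Rokhlin's formula for a countably infinite partition requires something like $H_{\bar\mu}(\{\bar\Upsilon_i\})<\infty$ or a Shannon--McMillan--Breiman argument tailored to the structure; this does not follow from $\int R\,d\bar\mu<\infty$ together with \eqref{P-Markov}(c), since the number of branches with a given return time, though finite, is not controlled. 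A second, lesser gap is the Abramov step: $F=f^R$ is not a first-return map and $\tilde\mu$ is not $\mu(\cdot\,|\,\Lambda)$ (the saturation \eqref{eq:saturation-definition} can charge $\Lambda$ at intermediate times), so the classical theorem does not apply directly; you need to pass through the tower over $(F,\tilde\mu)$ with roof $R$, apply Abramov there, and use that the projection onto $(f,\mu)$ has countable fibers (by invertibility of $f$) and hence preserves entropy --- standard, but it must be said. The quotient step $h_{\tilde\mu}(F)=h_{\bar\mu}(\bar F)$, which you flag as delicate, is indeed the sound part: it follows from \eqref{P-contraction-s-leaves} by the usual argument. Finally, note that your chain ends with $\int\log J\bar F\,d\bar\mu$ rather than $\int\log J\bar F\,d\bar m$; this is not a defect on your side, since Proposition~\ref{prop:properties-bowen-measure}(\ref{properties-bowen-measure-3-discrete}) in the paper's own proof also produces $d\bar\mu$, and Section~\ref{sec:entropy-continuity} works with $\int\log J\bar F_n\,d\bar\mu_n$, so the $d\bar m$ in the statement appears to be a slip.
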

\begin{proof}
By \cite[Corollary~7.4.2]{LY2}
we have
\begin{equation}
\label{eq:entropy-formula-1}
h_{\mu}=\sum_{\lambda_i>0}\lambda_i\dim E_i,
\end{equation}
where $\lambda_i$ are Lyapunov exponents of $\mu$ and $E_i$ the
corresponding  linear spaces given by Oseledets' decomposition.
By Lemma~\ref{lem:lyapunov-rel} we have
\[
h_{\mu}=\sigma^{-1}\sum_{\tilde\lambda_i>0}\tilde\lambda_i\dim E_i,
\]
where $\tilde\lambda_i$ are Lyapunov exponents of $\tilde\mu$. As a consequence of Oseledets theorem
 we may also write
\[
\sum_{\tilde\lambda_i>0}\tilde\lambda_i\dim E_i=\int_\Lambda\log\det D
F^u d\tilde\mu.
\]
According to \eqref{eq:jacobians-rel},
\begin{equation*}
\begin{split}
\int_\Lambda\log JF d\tilde\mu&=\int_\Lambda\log\det D F^u d\tilde\mu+
\int_\Lambda\log \hat u\circ F d\tilde\mu-\int_\Lambda\log \hat u d\tilde\mu \\
&=\int_\Lambda\log\det D F^u d\tilde\mu,
\end{split}
\end{equation*}
where the last equality follows from the $F$-invariance of $\tilde\mu$.
Finally, since by Lemma~\ref{lem:jac-def} $JF$ is constant in each
$\gamma^s$-leaf it follows from
Proposition~\ref{prop:properties-bowen-measure}~
\eqref{properties-bowen-measure-3-discrete} that
\[
\int_\Lambda\log JF d\tilde\mu=\int_{\bar\Lambda}\log J\bar Fd\bar
m.
\]
\end{proof}

\section{Statistical Stability}\label{se:tres}

Let $\F$ be a uniform family of maps. Fix $f_0\in\F$ and take any
sequence $(f_n)_{n\geq1}$ in $\F$ such that $f_n\to f_0$, as
$n\to\infty$, in the $C^k$ topology. For each $n\ge0$, let $\mu_n$ denote the (unique) SRB measure for $f_n$. Given $n\ge0$, the map
$f_n\in\F$ admits a Gibbs-Markov structure $\Lambda_n$  with $\Gamma^u_n=\{\gamma^u_n\}$
and $\Gamma^s_n=\{\gamma^s_n\}$ its defining families of unstable
and stable leaves. Consider $R_n:\Lambda_n\to\N$ the
return time,
$F_n:\Lambda_n\rightarrow\Lambda_n$ the induced map,
 $\hat\gamma_n$ the special unstable leaf given
by condition \eqref{u-prox-unstable-direction} and
$H_n:\hat\gamma_n\cap\Gamma^s_0\to\hat\gamma_0$  obtained by sliding through the
stable leaves of~$\Lambda_0$. Recall that
$\Omega^n_0=H_n(\hat\gamma_n\cap\Lambda_n)$ and
$\Omega_0=\hat\gamma_0\cap\Lambda_0$.

\begin{remark}
\label{rem:iterates-cont} Since $f_n\to f_0$, as $n\to\infty$, in
the $C^k$ topology and \eqref{u-prox-unstable-direction} holds, then
for every $\varepsilon>0$ and $\ell\in\N$, there exists $N_0\in\N$
such that for every $n\geq N_0$ we have
$$\|\hat\gamma_n-\hat\gamma_0\|_1<\varepsilon,$$
\[
\max_{x\in\Omega_0\cap\Omega^n_0}
\left\{|(f_n\circ H_n^{-1}-f_0)(x)|,\ldots,|(f_n^\ell\circ
H_n^{-1}-f_0^\ell)(x)|\right\}<\varepsilon,
\]
and
\[
\max_{x\in\Omega_0\cap\Omega^n_0}
\left\{\left|\log\frac{\det Df_n^u(f_n\circ H_n^{-1}(x))}{\det
        Df_0^u(f_0(x))}\right|,\ldots,\left|\log\frac{\det Df_n^u(f_n^\ell\circ H_n^{-1}(x))}{\det
        Df_0^u(f_0^\ell(x))}\right|\right\}<\varepsilon.
\]
\end{remark}

 Our goal is to show that $\mu_n\rightarrow\mu_0$
in the weak* topology, i.e. for each continuous function
$g:M\rightarrow\R$ the sequence $\int g\,d\mu_n$  converges to $\int
g \,d\mu_0$. We will show that given any continuous
$g:M\rightarrow\R$, each subsequence of $\int g\,d\mu_n$ admits a
subsequence converging  to $\int g\, d\mu_0$.

\subsection{Convergence of  the densities on the reference leaf}
\label{subsec:cont-quotient}
In Section \ref{subsec:quotient-natural-measure} we built a family
of  holonomy invariant measures on unstable leaves that gives rise
to a measure $\bar m_n$ on $\bar{\Lambda}_n$. Moreover,
\begin{equation}\label{eq:pi-chapeu}(\hat\pi_n )_*
m_{\hat\gamma_n}=\bar m_n\quad\text{and}\quad m_{\hat\gamma_n}=
\I_{\hat\gamma_n\cap\Lambda_n}\ln
 ,
 \end{equation} where $\I_{(\cdot)}$ stands for the indicator function.
By Lemma~\ref{lem:small-adjust}, for each $n\ge0$ there is an
$\bar{F}_n$-invariant measure $\bar\mu_n=\bar{\rho}_n \bar m_n$ with  $\|\bar\rho_n\|_\infty\leq K$ for
all $n\ge0$. We define the sequence $(\varrho_n)_{n\ge0}$ of functions in~$\hat\gamma_0$ as
  \begin{equation}
  \label{eq:def-varrho}\varrho_n=\bar\rho_n\circ\hat\pi_n\circ H_n^{-1}
  \cdot\I_{\Omega^n_0},\end{equation}
which in particular gives
 $$\varrho_0=\bar\rho_0\circ\hat\pi_0.$$

 The main purpose of this section is to prove that \emph{the sequence $\left({\varrho}_{n}\right)_{n\in\N}$ converges to
${\varrho}_{0}$ in the weak*
topology}.
By Banach-Alaoglu theorem there is a subsequence
$\left({\varrho}_{n_i}\right)_{i\in\N}$ converging to some
${\varrho}_{\infty}\in
L^{\infty}(\l0)$ in the weak*
topology, i.e.
\begin{equation}
\label{eq:banach-alaoglu-sequence}
\int\phi{\varrho}_{n_i}d\leb_{\hat\gamma_0}
\xrightarrow[i\rightarrow\infty]{}
 \int\phi{\varrho}_\infty d\leb_{\hat\gamma_0},\quad
\forall \phi\in L^1(\l0).
\end{equation}
The following lemma establishes that integration with respect to $\bar m_n$ is close to integration with respect to $\varrho_n\l0$, up to a small error.

\begin{lemma}
\label{lem:rel-integration-unstab-leaves}
Let $\bar\phi\in L^\infty(\bar m_n)$. If $n$ is sufficiently large, then
\begin{equation*}
\left|\int_{\bar\Lambda_n}\bar\phi \bar\rho_n\;d\bar m_n-
\int_{\Omega^n_0}(\bar\phi \circ\hat\pi_n\circ
H_n^{-1})\varrho_n\,d\l0\right|\leq
K\|\bar\phi\|_\infty Q_n,
\end{equation*}
where
$Q_n=\ln(\Omega_n^0\bigtriangleup\Omega_n)+\left|\int_{\Omega_{0}^n} d(H_{n})_*\ln -
\int_{\Omega_{0}^n} d\l0\right|$.
\end{lemma}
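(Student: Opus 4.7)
The plan is to unfold the LHS via the pushforward identity $(\hat\pi_n)_*m_{\hat\gamma_n}=\bar m_n$, to transport the resulting integral from $\hat\gamma_n$ to $\hat\gamma_0$ through $H_n$, and then to account separately for the two mismatches captured by $Q_n$. Starting from~\eqref{eq:pi-chapeu}, the pushforward gives
\begin{equation*}
\int_{\bar\Lambda_n}\bar\phi\,\bar\rho_n\,d\bar m_n
=\int_{\Omega_n}(\bar\phi\circ\hat\pi_n)(\bar\rho_n\circ\hat\pi_n)\,d\ln,
\end{equation*}
and the uniform bounds $|\bar\phi\circ\hat\pi_n|\leq\|\bar\phi\|_\infty$ and $\bar\rho_n\leq K$ furnished by Lemma~\ref{lem:small-adjust} will be used repeatedly.

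I would next split the domain as $\Omega_n=(\Omega_n\cap\Omega_n^0)\sqcup(\Omega_n\setminus\Omega_n^0)$. The piece over $\Omega_n\setminus\Omega_n^0$ is bounded in absolute value by $K\|\bar\phi\|_\infty\,\ln(\Omega_n\triangle\Omega_n^0)$, which accounts for the first summand of $Q_n$. On the remaining piece the map $H_n$ is a homeomorphism onto $\Omega_0^n=H_n(\Omega_n\cap\Omega_n^0)$ from~\eqref{eq.omegan}, so the change-of-variables formula, together with the definition~\eqref{eq:def-varrho} of $\varrho_n$, turns the integral into
\begin{equation*}
\int_{\Omega_n\cap\Omega_n^0}(\bar\phi\circ\hat\pi_n)(\bar\rho_n\circ\hat\pi_n)\,d\ln
=\int_{\Omega_0^n}(\bar\phi\circ\hat\pi_n\circ H_n^{-1})\,\varrho_n\,d(H_n)_*\ln.
\end{equation*}

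To close the argument I would replace $(H_n)_*\ln$ by $\l0$ in this last integral. By Remark~\ref{rem:Leb-Hm*ln-l0}(i) the measure $(H_n)_*\ln$ is absolutely continuous with respect to $\l0$ with a uniformly bounded density, and the integrand is pointwise bounded by $K\|\bar\phi\|_\infty$; combining these facts, the difference of the two integrals is controlled by $K\|\bar\phi\|_\infty$ times the mass discrepancy $\bigl|\int_{\Omega_0^n}d(H_n)_*\ln-\int_{\Omega_0^n}d\l0\bigr|$, which is exactly the second summand of $Q_n$. The main subtlety I foresee is precisely this last step: the $Q_n$ prescribed in the statement involves only a scalar mass discrepancy rather than an $L^1$-distance between densities, so one must exploit that $\varrho_n\leq K$ and that the density of $(H_n)_*\ln$ relative to $\l0$ is uniformly bounded on $\Omega_0^n$, in order to reduce the $L^\infty$-weighted signed-measure comparison to a comparison of total masses. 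Summing the two error terms with the main identity then yields the claimed inequality.
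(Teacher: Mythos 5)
Your argument is the paper's proof almost verbatim: push forward via $(\hat\pi_n)_*m_{\hat\gamma_n}=\bar m_n$, bound the contribution of $\Omega_n\setminus\Omega_n^0$ by $K\|\bar\phi\|_\infty\,\ln(\Omega_n^0\triangle\Omega_n)$, transport the intersection piece to $\hat\gamma_0$ through $H_n$ using the definition \eqref{eq:def-varrho} of $\varrho_n$, and finally compare integration against $(H_n)_*\ln$ with integration against $\l0$. The subtlety you flag in that last step is present in the paper's own proof as well, which makes exactly the same jump from the weighted difference to $K\|\bar\phi\|_\infty$ times the scalar mass discrepancy; strictly one only obtains the bound $K\|\bar\phi\|_\infty\int_{\Omega_0^n}\bigl|\tfrac{d(H_n)_*\ln}{d\l0}-1\bigr|\,d\l0$ (boundedness of the density alone does not reduce a signed comparison to a comparison of total masses), but this is harmless since every subsequent use of $Q_n$ controls it through item ii) of Remark~\ref{rem:Leb-Hm*ln-l0}, which makes this $L^1$ quantity small as well.
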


\begin{proof}
  By \eqref{eq:pi-chapeu}, we have
  $\int_{\bar\Lambda_n}\bar\phi \bar\rho_n\;d\bar m_n
  =\int_{\Omega_n}(\bar\phi\circ\hat\pi_n)( \bar\rho_n\circ\hat\pi_n)\;
  d\ln.$ It follows that
  \begin{align*}
    \left|\int_{\bar\Lambda_n}\bar\phi \bar\rho_n\;d\bar m_n
    \right.&\left.-\int_{\Omega^n_0}(\bar\phi
\circ\hat\pi_n\circ
H_n^{-1})\varrho_n\,d\l0\right|\leq\left|\int_{\Omega_n^0
\bigtriangleup \Omega_n}(\bar\phi\circ\hat\pi_n)(
\bar\rho_n\circ\hat\pi_n)\;d\ln\right|\\
  &\quad+\left|\int_{\Omega_n^0
\cap\Omega_n}(\bar\phi\circ\hat\pi_n)( \bar\rho_n\circ\hat\pi_n)\;
  d\ln-\int_{\Omega^n_0}(\bar\phi
\circ\hat\pi_n\circ H_n^{-1})\varrho_n\,d\l0\right|\\
&\leq
K\|\bar\phi\|_\infty\ln(\Omega_n^0\bigtriangleup\Omega_n)\\
&\quad+\left|\int_{\Omega^n_0}(\bar\phi\circ\hat\pi_n\circ
H_n^{-1})\varrho_n\;
  d(H_n)_*\ln-\int_{\Omega^n_0}(\bar\phi
\circ\hat\pi_n\circ H_n^{-1})\varrho_n\,d\l0\right|\\
&\leq K\|\bar\phi\|_\infty\ln(\Omega_n^0\bigtriangleup\Omega_n)+
K\|\bar\phi\|_\infty\left|\int_{\Omega_{0}^n} d(H_{n})_*\ln -
\int_{\Omega_{0}^n} d\l0\right|.
  \end{align*}

\end{proof}

Consider the maps $G_0:\hat\gamma_0\to\hat\gamma_0$ and $G_{n}:\hat\gamma_0\to\hat\gamma_n$
defined by $$G_{0}=\hat\pi_{0}^{-1}\circ\bar F_{0}\circ\hat\pi_{0}
\quad\text{and}\quad G_{n}=\hat\pi_{n}^{-1}\circ\bar F_{n}\circ\hat\pi_{n}\circ H_{n}^{-1}.$$

\begin{lemma}
\label{lem:bar-F0-bar-Fni-proximity} For every $\varepsilon>0$,
$n\in\N$ sufficiently large and $\leb_{\hat\gamma_0}$-almost every
$x\in\Omega_0\cap\Omega_0^n\cap\{R_{n}=\ell\}\cap\{R_0=\ell\}$
we have $|G_{n}(x)-G_0(x)|<\varepsilon$.
\end{lemma}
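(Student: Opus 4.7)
I would begin by unpacking the definitions geometrically. For $\leb_{\hat\gamma_0}$-a.e.\ $x$ in the stated set, let $y = H_n^{-1}(x) \in \hat\gamma_n \cap \Lambda_n$. Since $R_n(y) = R_0(x) = \ell$, the induced maps reduce to honest $\ell$-th iterates, and each projection $\hat\pi_n^{-1}$ (resp.\ $\hat\pi_0^{-1}$) slides along the stable foliation of $\Lambda_n$ (resp.\ $\Lambda_0$) back to $\hat\gamma_n$ (resp.\ $\hat\gamma_0$). Thus
\[
G_n(x) = \hat\gamma_n \cap \gamma^s_n\bigl(f_n^\ell(y)\bigr),\qquad G_0(x) = \hat\gamma_0 \cap \gamma^s_0\bigl(f_0^\ell(x)\bigr),
\]
and the lemma reduces to comparing two holonomy intersections in nearby Gibbs--Markov structures.

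The proof then rests on three proximity ingredients. First, by Remark~\ref{rem:iterates-cont} applied with the fixed $\ell$, we have $|f_n^\ell(y) - f_0^\ell(x)| < \varepsilon$ for $n$ large. Second, by \eqref{u-prox-unstable-direction} the landing curves satisfy $\hat\gamma_n \to \hat\gamma_0$ in the $C^1$ topology. Third, the stable leaf $\gamma^s_n(f_n^\ell(y))$ must be shown to converge, in an appropriate $C^1$ sense, to $\gamma^s_0(f_0^\ell(x))$. Granted these three, the uniform transversality $\theta > 0$ between stable and unstable directions (see Remark~\ref{rem:Leb-Hm*ln-l0}) forces the intersection points $G_n(x)$ and $G_0(x)$ to be within $\varepsilon$ of one another, up to an absolute multiplicative constant depending only on $\theta$.

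The main obstacle is the third ingredient, because \eqref{u-proximity-stable-direction} a priori only supplies stable-leaf convergence at points of $\Omega_0 \cap \Omega_0^n \subset \hat\gamma_0$, and $f_0^\ell(x)$ generally does not lie on $\hat\gamma_0$. To bypass this I would observe that by construction $G_0(x) \in \hat\gamma_0 \cap \Lambda_0 = \Omega_0$, and that $G_0(x)$ and $f_0^\ell(x)$ lie on a common stable leaf of $\Lambda_0$; hence $\gamma^s_0(G_0(x)) = \gamma^s_0(f_0^\ell(x))$, so applying \eqref{u-proximity-stable-direction} at the legitimate basepoint $G_0(x)$ yields $\gamma^s_n(G_0(x)) \to \gamma^s_0(f_0^\ell(x))$ in $C^1$. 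It remains to slide from $\gamma^s_n(G_0(x))$ to $\gamma^s_n(f_n^\ell(y))$ within the stable foliation of $\Lambda_n$; this is handled by the uniform H\"older regularity of each such foliation, which follows from uniform exponential contraction \eqref{P-contraction-s-leaves} together with the uniform constants in \eqref{u-uniform-constants} and the transversality bound $\theta$, and by the proximity $|f_n^\ell(y) - f_0^\ell(x)| < \varepsilon$ from Step~1.

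Combining the three proximities, the intersection $\hat\gamma_n \cap \gamma^s_n(f_n^\ell(y))$ lies within $O(\varepsilon/\sin\theta)$ of $\hat\gamma_0 \cap \gamma^s_0(f_0^\ell(x)) = G_0(x)$, which after adjusting $\varepsilon$ gives the stated bound. The only subtlety is the off-curve stable-foliation comparison, which is resolved by the ``pivot'' through $G_0(x) \in \Omega_0$; the rest of the argument is a straightforward combination of uniform continuity of the family and transversality.
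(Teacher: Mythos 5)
Your overall scheme---compare the $\Lambda_n$-stable leaf through $F_n(H_n^{-1}(x))$ with the $\Lambda_0$-stable leaf through $F_0(x)$, then intersect with the $C^1$-close curves $\hat\gamma_n$ and $\hat\gamma_0$ and use the transversality angle $\theta$---is the same as the paper's, and you correctly isolated the crux: \eqref{u-proximity-stable-direction} only gives leaf proximity at points of $\Omega_0^n\cap\Omega_0$, while $f_0^\ell(x)$ lies off the reference leaf. But your fix for this has a genuine gap. You apply \eqref{u-proximity-stable-direction} at the base point $G_0(x)$. That point lies in $\Omega_0$, but nothing guarantees it lies in $\Omega_0^n=H_n(\Omega_n\cap\Omega_n^0)$; worse, the object ``$\gamma_n^s(G_0(x))$'' need not even be defined, since $G_0(x)$ is a point of $\Lambda_0$ which generically does not sit on any leaf of the Cantor-like lamination $\Gamma^s_n$. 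Knowing only that $\ln(\Omega_n\triangle\Omega_n^0)\to0$ does not yield, for a.e.\ $x$ and all large $n$, that $G_0(x)\in\Omega_0^n$ (one would need summable rates for a Borel--Cantelli type argument, which the uniformity conditions do not provide). This is precisely where the paper's proof does something you do not: it places the pivot on the $\Lambda_n$ side, choosing $\tilde y\in\Omega_n^0\cap\Omega_n$ close to $G_n(x)$; the existence of such $\tilde y$ comes from $G_n(x)$ being a Lebesgue density point of $\Omega_n$ combined with \eqref{u-matching-cantor-sets}---and this density-point step is exactly where the ``$\leb_{\hat\gamma_0}$-almost every $x$'' in the statement is used, a restriction your argument never invokes, which is a symptom of the gap. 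With that pivot, \eqref{u-proximity-stable-direction} is applied legitimately at the corresponding point of $\Omega_0^n\cap\Omega_0$, and one returns to $\gamma_0^s(f_0^\ell(x))$ via Remark~\ref{rem:iterates-cont} and the continuity of the stable foliation, before intersecting with $\hat\gamma_n$, $\hat\gamma_0$ as you do.

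A secondary objection: you assert ``uniform H\"older regularity'' of the stable foliations of $\Lambda_n$, derived from \eqref{P-contraction-s-leaves}, \eqref{u-uniform-constants} and $\theta$. No such H\"older control follows from uniform contraction and uniform constants alone, and it is not part of the hypotheses; what is actually used (in the paper as well) is only the continuity of the defining families in Definition~\ref{def:cont-families-of-(un)stable-curves}(iii), i.e.\ that leaves through nearby base points are $C^1$-close. Once your pivot is replaced by the density-point pivot described above, that continuity of $\Gamma^s_n$ and $\Gamma^s_0$, together with Remark~\ref{rem:iterates-cont}, \eqref{u-prox-unstable-direction} and the angle bound, completes the argument; the extra regularity you postulate is neither available nor needed.
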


\begin{proof}
Consider a point
$x\in\Omega_0\cap \Omega_0^n\cap\{R_{n}=\ell\}\cap\{R_0=\ell\}$.
We may assume that $G_n(x)$ 
is a Lebesgue density point of $\Omega_n$. Then, using
\eqref{u-matching-cantor-sets} and the continuity of the stable
foliation (see
Definition~\ref{def:cont-families-of-(un)stable-curves}~(iii)), for
sufficiently large $n\in\N$ we may guarantee the existence of a
point $\tilde y\in \Omega_n^0\cap\Omega_{n}$ such that
$\gamma_n^s(\tilde y)$ is at most $\varepsilon\sin(\theta)/4$ apart
from $\gamma_n^s(G_n(x))$ in the $C^1$-norm; recall Remark~\ref{rem:Leb-Hm*ln-l0}.
Using \eqref{u-proximity-stable-direction} we may assume that
$n\in\N$ is also sufficiently large so that the distance in the
$C^1$ norm between $\gamma_{n}^s(\tilde y)$ and $\gamma_0^s(\tilde
y) $  is at most $\varepsilon\sin(\theta)/4$.

Taking into account Remark~\ref{rem:iterates-cont} and the
continuity of the stable foliation, we may assume that $n\in\N$ is large
enough so that $|f_{n}^l(H_{n}^{-1}(x))-f_0^l(x)|$ is sufficiently
small in order to $\gamma_{0}^s(f_{0}^l(x))$ belong to a
$\varepsilon\sin(\theta)/4$-neighborhood of $\gamma_{0}^s(\tilde
y)$, in the $C^1$-norm. It follows that
$\gamma_n^s(f_n^l(H_n^{-1}(x)))$ and $\gamma_{0}^s(f_{0}^l(x))$ are
at most $3\varepsilon\sin(\theta)/4$ apart, in the $C^1$-norm.
Finally, observing  that $G_n(x)=\gamma_n^s(f_n^l(H_n^{-1}(x)))\cap
\gamma^u_n$, $G_0(x)=\gamma_{0}^s(f_{0}^l(x))\cap\gamma^u_0$ and
$\gamma^u_n$ can be made
 arbitrarily close to $\gamma^u_0$, in the $C^1$-norm
  (by \eqref{u-prox-unstable-direction}),
 then, as long as $n$ is sufficiently large, we have
 $|G_n(x)-G_0(x)|<\varepsilon $.
\begin{figure}[h]
\includegraphics[scale=0.9]{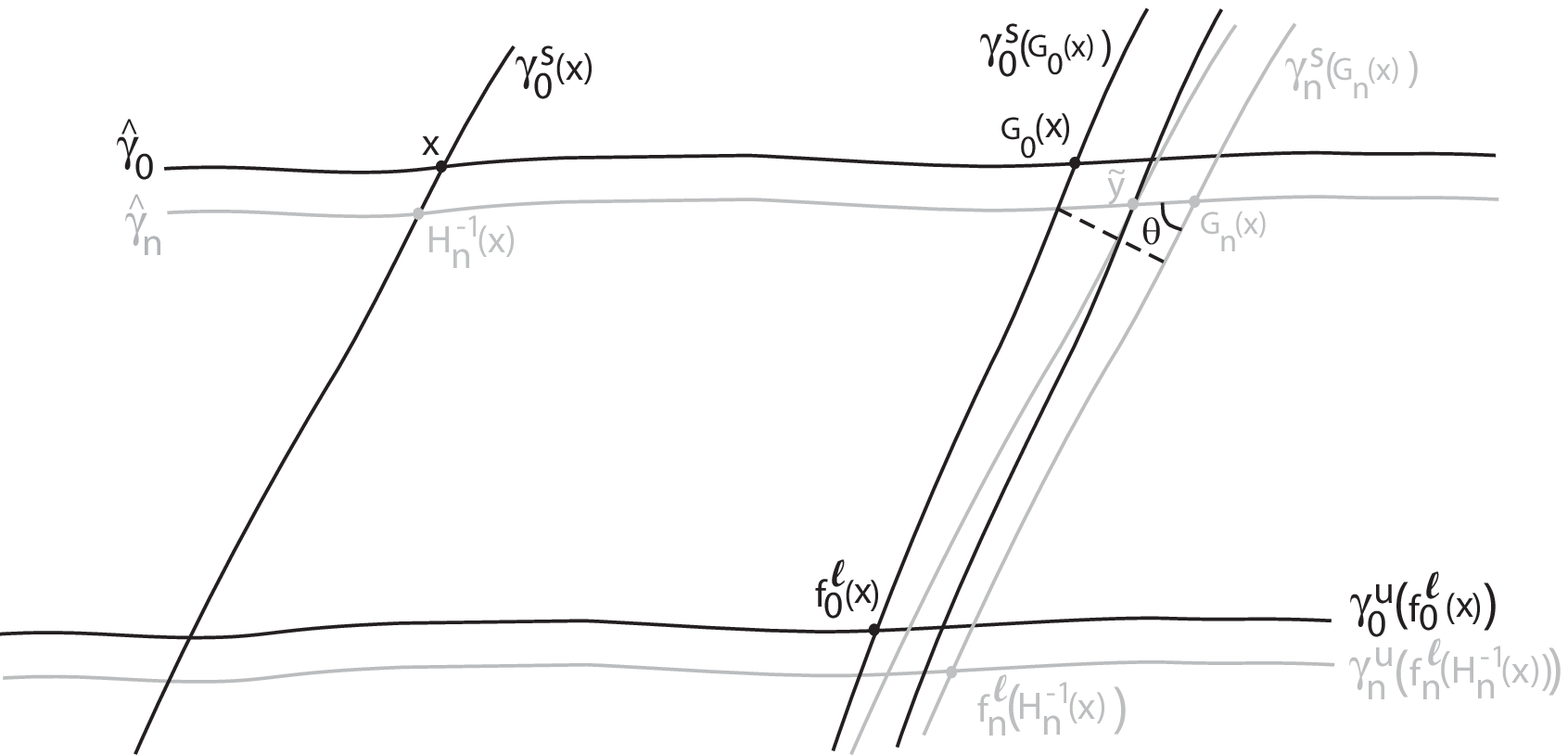}
\caption{} \label{fig:F0invariance}
\end{figure}
\end{proof}

\begin{proposition}
\label{lem:rho-infty-invariance} The measure
$
({\varrho}_\infty \circ \hat\pi_0^{-1})\bar{m}_0$ is
$\bar{F}_0$-invariant.
\end{proposition}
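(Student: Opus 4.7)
The plan is to establish the $\bar F_0$-invariance by first rephrasing it as an identity on the reference leaf $\hat\gamma_0$, then deriving the corresponding identity at level $n$ from the known $\bar F_n$-invariance of $\bar\mu_n$, and finally passing to the limit along the subsequence $(n_i)$ furnished by \eqref{eq:banach-alaoglu-sequence}.

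Since $(\hat\pi_0)_*m_{\hat\gamma_0}=\bar m_0$ and $m_{\hat\gamma_0}=\I_{\Omega_0}\l0$ by \eqref{eq:pi-chapeu}, writing $\phi_0=\bar\phi\circ\hat\pi_0$ for any continuous $\bar\phi:\bar\Lambda_0\to\R$, the desired invariance reduces to
$$
\int(\phi_0\circ G_0)\,\varrho_\infty\,d\l0=\int\phi_0\,\varrho_\infty\,d\l0.
$$
(The replacement of $m_{\hat\gamma_0}$ by $\l0$ is legitimate because each $\varrho_n$ vanishes off $\Omega_0^n\subset\Omega_0$, hence so does $\varrho_\infty$.) To produce this identity, push $\phi_0$ to $\bar\Lambda_n$ by setting $\phi_n=\phi_0\circ H_n$ on $\Omega_n\cap\Omega_n^0$ and $\bar\phi_n=\phi_n\circ\hat\pi_n^{-1}$. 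The $\bar F_n$-invariance of $\bar\mu_n=\bar\rho_n\bar m_n$ reads $\int\bar\phi_n\circ\bar F_n\,\bar\rho_n\,d\bar m_n=\int\bar\phi_n\,\bar\rho_n\,d\bar m_n$. Applying Lemma \ref{lem:rel-integration-unstab-leaves} to each side and using the identities $\bar\phi_n\circ\hat\pi_n\circ H_n^{-1}=\phi_0$ and $\bar\phi_n\circ\bar F_n\circ\hat\pi_n\circ H_n^{-1}=\phi_n\circ G_n=\phi_0\circ H_n\circ G_n$, one obtains
$$
\left|\int_{\Omega_0^n}\phi_0\circ H_n\circ G_n\,\varrho_n\,d\l0-\int_{\Omega_0^n}\phi_0\,\varrho_n\,d\l0\right|\le 2K\|\phi_0\|_\infty Q_n,
$$
with $Q_n\to 0$ by \eqref{u-matching-cantor-sets} and Remark \ref{rem:Leb-Hm*ln-l0}.

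Taking $n=n_i\to\infty$, the second integral converges to $\int\phi_0\,\varrho_\infty\,d\l0$ by the weak* convergence \eqref{eq:banach-alaoglu-sequence}. For the first, decompose
$$
\phi_0\circ H_n\circ G_n=\phi_0\circ G_0+\bigl(\phi_0\circ H_n\circ G_n-\phi_0\circ G_0\bigr);
$$
the contribution of $\phi_0\circ G_0$ converges to $\int\phi_0\circ G_0\,\varrho_\infty\,d\l0$, again by weak* convergence (note $\phi_0\circ G_0$ is bounded measurable, hence in $L^1(\l0)$). The discrepancy term is controlled using the uniform bound $\|\varrho_n\|_\infty\le K$ and a uniform continuity argument: since $\phi_0$ is uniformly continuous on the compact $\Omega_0$, fix $\varepsilon>0$ and the corresponding $\delta>0$. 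Choose $N$ so large that $\l0\{R_0>N\}<\varepsilon$, and, via \eqref{u-uniform-tail}, also that the $\l0$-measure of $H_n^{-1}\{R_n>N\}\cap\Omega_0^n$ is less than $\varepsilon$ for $n$ large. On the complement, Lemma \ref{lem:bar-F0-bar-Fni-proximity} gives $|G_n(x)-G_0(x)|<\delta/2$ for $n$ sufficiently large, while $|H_n(y)-y|\to 0$ uniformly (because $\hat\gamma_n\to\hat\gamma_0$ in $C^1$ and the stable manifolds of $\Lambda_0$ meet $\hat\gamma_0$ at angles uniformly bounded below, cf. Remark \ref{rem:Leb-Hm*ln-l0}); hence $|H_n\circ G_n(x)-G_0(x)|<\delta$ and $|\phi_0\circ H_n\circ G_n(x)-\phi_0\circ G_0(x)|<\varepsilon$ on that set. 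Combining yields the desired identity.

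The main obstacle is precisely this last step: one must interchange the weak* limit with the nonlinear composition $\phi_0\circ H_n\circ G_n$, in which $H_n$ and $G_n$ both depend on $n$ and have no nice regularity properties uniform in $n$. The decomposition above, supported by the pointwise proximity $H_n\circ G_n\to G_0$ on matched return-time strata (Lemma \ref{lem:bar-F0-bar-Fni-proximity}) and by the uniform tail assumption \eqref{u-uniform-tail} to discard unmatched strata, is what allows uniform continuity of $\phi_0$ to transfer the closeness of maps into closeness of integrals, yielding finally the $\bar F_0$-invariance of $(\varrho_\infty\circ\hat\pi_0^{-1})\bar m_0$.
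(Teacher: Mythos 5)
Your overall strategy coincides with the paper's: use the $\bar F_n$-invariance of $\bar\mu_n$, transfer both sides of that identity to the reference leaf $\hat\gamma_0$ via Lemma~\ref{lem:rel-integration-unstab-leaves}, pass to the limit along the Banach--Alaoglu subsequence \eqref{eq:banach-alaoglu-sequence}, and control the composition error using Lemma~\ref{lem:bar-F0-bar-Fni-proximity}, the tail condition and uniform continuity of the test function. However, as written there are two genuine gaps. The more serious one is in the final step: Lemma~\ref{lem:bar-F0-bar-Fni-proximity} gives $|G_n(x)-G_0(x)|<\delta$ only on strata where the return times \emph{agree}, i.e.\ on $\Omega_0\cap\Omega_0^n\cap\{R_n=\ell\}\cap\{R_0=\ell\}$. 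After discarding $\{R_0>N\}\cup\{R_n>N\}$ by \eqref{u-uniform-tail}, you are still left with the set where $R_n\neq R_0$ but both are at most $N$; the tail condition says nothing about this set, contrary to your closing claim that \eqref{u-uniform-tail} ``discards unmatched strata''. Its smallness is precisely what \eqref{u-matching-s-sublattices} provides (this is the term $E_5$ in the paper's proof, and the same point is handled again in Lemma~\ref{le.impu1}), and your argument never invokes \eqref{u-matching-s-sublattices}.

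The second gap concerns your transported test function: $\bar\phi_n=\phi_0\circ H_n\circ\hat\pi_n^{-1}$ is defined only on $\hat\pi_n(\Omega_n\cap\Omega_n^0)$, so to use the invariance identity you must extend it (say by zero); then the claimed identity $\bar\phi_n\circ\bar F_n\circ\hat\pi_n\circ H_n^{-1}=\phi_0\circ H_n\circ G_n$ fails at those $x\in\Omega_0^n$ for which $G_n(x)\notin\Omega_n\cap\Omega_n^0$, where $H_n(G_n(x))$ is not even defined. The measure of this bad set is not $\leb_{\hat\gamma_n}(\Omega_n\triangle\Omega_n^0)$ but the measure of its preimage under $G_n$; it can be bounded, for instance using the $\bar F_n$-invariance of $\bar\mu_n$ together with $K^{-1}\leq\bar\rho_n\leq K$ from Lemma~\ref{lem:small-adjust} and \eqref{u-matching-cantor-sets}, but this estimate is missing. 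Note that the paper avoids this domain problem altogether by extending $\varphi\circ\hat\pi_0$ to a continuous function $\phi$ on all of $M$ with $\|\phi\|_\infty\leq\|\varphi\|_\infty$ and comparing $\phi\circ G_{n_i}$ with $\phi\circ G_0$ directly, which is the simpler route; with that modification, plus the missing appeal to \eqref{u-matching-s-sublattices}, your argument becomes essentially the paper's proof.
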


\begin{proof}
We just have to verify that for every continuous
$\varphi:\bar{\Lambda}_0\rightarrow \R$
\begin{equation*}\label{eq.fifi}
\int(\varphi\circ\bar{F}_0)({\varrho}_\infty \circ \hat\pi_0^{-1}) d {\bar m}_0=\int
\varphi ({\varrho}_\infty \circ \hat\pi_0^{-1}) d{\bar m}_0
\end{equation*}
Given such $\varphi$, consider a continuous function
$\phi:M\rightarrow\R$ such that $\|\phi\|_\infty\leq\|\varphi\|_\infty$ and
$\phi|_{\Omega_0}=\varphi\circ\hat\pi_0$.
Since $\bar{\mu}_{n_i}=\bar\rho_{n_i}d\bar{m}_{n_i}$ is
$\bar{F}_{n_i}$-invariant we have
\begin{equation}
  \label{eq:lema-rho-infty-rhoni-invariance}
  \int (\phi \circ \hat\pi_{n_i}^{-1}\circ \bar F_{n_i})
  \bar\rho_{n_i}d\bar{m}_{n_i}=\int (\phi \circ \hat\pi_{n_i}^{-1})
  \bar\rho_{n_i}d\bar{m}_{n_i}
\end{equation}
Recalling definitions \eqref{eq:pi-chapeu},\eqref{eq:def-varrho},
the fact that $\varrho_{n_i}$ is supported on
$\Omega_0^{n_i}\subset\Omega_0$ and applying
Lemmas~\ref{lem:rel-integration-unstab-leaves} and
\ref{lem:small-adjust} we get
\begin{align*}
\left|\int (\phi \circ \hat\pi_{n_i}^{-1})\bar\rho_{n_i}\right.
&\left.d\bar{m}_{n_i}- \int
\varphi({\varrho}_\infty \circ \hat\pi_0^{-1})\; d{\bar m}_0\right|\leq \\
&\leq \left|\int(\phi \circ H_{n_i}^{-1})\varrho_{n_i}d\l0 -
\int(\varphi\circ\hat\pi_0)\varrho_\infty d\l0 \right|+Q_{n_i}\\
&=\left|\int(\phi \circ H_{n_i}^{-1})\varrho_{n_i}d\l0 -\int\phi
\varrho_\infty d\l0 \right|+Q_{n_i}\\
&\leq\left|\int(\phi \circ H_{n_i}^{-1})\varrho_{n_i}d\l0 -\int\phi
\varrho_{n_i}d\l0 \right|+\\
&\quad +\left|\int\phi\varrho_{n_i}d\l0 -\int\phi\varrho_\infty d\l0
\right|+Q_{n_i}\\
&\leq K\int\left|\phi \circ H_{n_i}^{-1}-\phi\right|d\l0
+\left|\int\phi\varrho_{n_i}d\l0 -\int\phi\varrho_\infty d\l0
\right|+Q_{n_i}\\
\end{align*}
Therefore, using \eqref{u-prox-unstable-direction} for the first
term on the right, \eqref{eq:banach-alaoglu-sequence} for the second
and \eqref{u-matching-cantor-sets} plus
Remark~\ref{rem:Leb-Hm*ln-l0} for the $Q$ term, we conclude that
\begin{equation}
\label{eq:convergence-1-lema-rho-infty} \int (\phi \circ \hat\pi_{n_i}^{-1})\bar\rho_{n_i}d\bar{m}_{n_i}
\xrightarrow[i\rightarrow\infty]{}\int
\varphi ({\varrho}_\infty \circ \hat\pi_0^{-1}) d{\bar m}_0.
\end{equation}
Once we prove the next claim, then equality
\eqref{eq:lema-rho-infty-rhoni-invariance}, the limit \eqref{eq:convergence-1-lema-rho-infty} and the uniqueness of the
limit give the desired result.
\begin{claim}
\label{lem:estimate-for-rho-infty-invariance}
$\displaystyle\int (\phi \circ \hat\pi_{n_i}^{-1}\circ \bar F_{n_i})\bar\rho_{n_i}d\bar{m}_{n_i}
\xrightarrow[i\rightarrow\infty]{}\int
\varphi\circ \bar F_{0} ({\varrho}_\infty \circ \hat\pi_0^{-1}) d{\bar m}_0.$
\end{claim}
Let
\[
E_1:=\left|
\int (\phi \circ \hat\pi_{n_i}^{-1}\circ \bar F_{n_i})\bar\rho_{n_i}d\bar{m}_{n_i}
-\int
\varphi\circ \bar F_{0} ({\varrho}_\infty \circ \hat\pi_0^{-1}) d{\bar m}_0\right|.
\]
Again, using definitions \eqref{eq:pi-chapeu},\eqref{eq:def-varrho}
and applying Lemma~\ref{lem:rel-integration-unstab-leaves} we get
\[
E_1\leq \left|
\int (\phi \circ G_{n_i})\varrho_{n_i}d\l0
-\int
(\phi\circ G_0) {\varrho}_\infty d\l0\right|+Q_{n_i}
\]
Now, observe that by \eqref{u-matching-cantor-sets} and
Remark~\ref{rem:Leb-Hm*ln-l0} the term  $Q_{n_i}$ can be made
arbitrarily small for large $i$. This leaves us with the first term
on the right that we denotee by $E_2$.  Using
Lemma~\ref{lem:small-adjust} we have
\begin{align*}
E_2
&\leq\int\left|\phi \circ G_{n_i}-\phi\circ G_0\right|\varrho_{n_i}d\l0
+\left|\int(\phi\circ G_0)\varrho_{n_i}d\l0 -\int(\phi\circ G_0)\varrho_\infty d\l0
\right|\\
&\leq K\int\left|\phi \circ G_{n_i}-\phi\circ G_0\right|d\l0
+\left|\int(\phi\circ G_0)\varrho_{n_i}d\l0 -\int(\phi\circ G_0)\varrho_\infty d\l0
\right|\\
\end{align*}
According to equation \eqref{eq:banach-alaoglu-sequence} it is clear
that the last term on the right can be made arbitrarily small
provided $i$ is large enough.  So, denote by $E_3$ the first term on
the right. Recalling the fact that $\varrho_{n_i}$ is supported on
$\Omega_0^{n_i}\subset\Omega_0$, we have for any $N$
\begin{equation*}
\begin{split}
E_3 &\leq
K\|\phi\|_\infty\sum_{\ell=N+1}^{\infty}\left(\l0(\{R_{n_i}=\ell\})+
\l0(\{R_0=\ell\})\right)\\&\quad+
K\|\phi\|_\infty\sum_{\ell=1}^{N}\l0(\{R_{n_i}=\ell\}
\bigtriangleup\{R_0=\ell\})
\\
&\quad
+K\sum_{\ell=1}^{N}\int_{\{R_{n_i}=\ell\}\cap\{R_0=\ell\}\cap\Omega_0
\cap\Omega_0^{n_i}} \left|\phi\circ G_{n_i}-\phi\circ G_0\right|
d\leb_{\hat\gamma_{0}} .
\end{split}
\end{equation*}
Denote by $E_4$, $E_5$ and $E_6$ respectively the terms in the last
sum. Having in mind \eqref{u-uniform-tail} and
Remark~\ref{rem:Leb-Hm*ln-l0}, we may choose $N\in\N$ sufficiently
large so that $E_4$ is small for large $i$. For this choice of $N$,
by \eqref{u-matching-s-sublattices}, we also have that $E_5$ is
small for large $i$. We now turn our attention to $E_6$. For
$\ell=1,\ldots,N$, let
$$E_6^\ell=\int_{\{R_{n_i}=\ell\}\cap\{R_0=\ell\}}
\left|\phi\circ G_{n_i}-\phi\circ G_0\right|
\I_{\Omega_0\cap\Omega_0^{n_i}}d\leb_{\hat\gamma_{0}} .$$ Since
$\phi$ is continuous and $M$ is compact then each $E_6^\ell$ can be
made arbitrarily small by Lemma~\ref{lem:bar-F0-bar-Fni-proximity}.
\end{proof}

\begin{corollary}\label{co:conv-densities-weak*}
Given $\phi\in L^1(\leb_{\hat\gamma_0})$, we have
\begin{equation*}
\int\phi\varrho_{n}d\leb_{\hat\gamma_0}
\xrightarrow[n\rightarrow\infty]{}
 \int\phi\varrho_0 d\leb_{\hat\gamma_0}.\quad
\end{equation*}
\end{corollary}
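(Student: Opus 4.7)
The plan is to combine weak* compactness with the invariance given by Proposition~\ref{lem:rho-infty-invariance} and uniqueness of the absolutely continuous invariant probability for $\bar F_0$. By Lemma~\ref{lem:small-adjust} we have $\|\bar\rho_n\|_\infty \le K$ for every $n$, so by \eqref{eq:def-varrho} the family $(\varrho_n)_{n\in\N}$ is bounded in $L^\infty(\l0)$. Since $\l0$ is a finite measure, it suffices to prove that every weak* accumulation point of $(\varrho_n)$ equals $\varrho_0$; the standard subsequence argument then forces weak* convergence of the full sequence to $\varrho_0$, which is the content of the corollary.

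So I would fix an arbitrary subsequence and extract a sub-subsequence $(\varrho_{n_i})$ converging weak* to some $\varrho_\infty\in L^\infty(\l0)$, as in \eqref{eq:banach-alaoglu-sequence}. Proposition~\ref{lem:rho-infty-invariance} tells us that $(\varrho_\infty\circ\hat\pi_0^{-1})\bar m_0$ is $\bar F_0$-invariant. To see that it is a probability, I would apply Lemma~\ref{lem:rel-integration-unstab-leaves} with $\bar\phi\equiv 1$ and use that $\int \bar\rho_n\,d\bar m_n = 1$ to get
$$\left|\,1-\int_{\Omega_0^n}\varrho_n\,d\l0\,\right|\le K\,Q_n,$$
and the right-hand side tends to zero by \eqref{u-matching-cantor-sets} together with Remark~\ref{rem:Leb-Hm*ln-l0}; hence $\int\varrho_n\,d\l0\to 1$. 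Testing \eqref{eq:banach-alaoglu-sequence} against $\phi\equiv 1\in L^1(\l0)$ then gives $\int\varrho_\infty\,d\l0=1$, so $(\varrho_\infty\circ\hat\pi_0^{-1})\bar m_0$ is an $\bar F_0$-invariant probability measure, absolutely continuous with respect to $\bar m_0$, with density bounded by $K$.

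To conclude I would invoke uniqueness. By hypothesis $f_0$ admits a unique SRB measure $\mu_0$. The construction in Section~\ref{sec:construction-SRB} (quotient by the stable foliation followed by saturation) sets up a correspondence that assigns to each bounded $\bar F_0$-invariant density with respect to $\bar m_0$ an SRB measure for $f_0$; in particular $\bar\mu_0=\bar\rho_0\,\bar m_0$ corresponds to $\mu_0$. Therefore the bounded absolutely continuous $\bar F_0$-invariant probability measure on $\bar\Lambda_0$ is unique, giving $\varrho_\infty\circ\hat\pi_0^{-1}=\bar\rho_0$ $\bar m_0$-almost everywhere, and thus $\varrho_\infty=\varrho_0$ $\l0$-almost everywhere.

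The main obstacle is really the uniqueness invoked in the last step: one must be sure that the passage from uniqueness of the SRB measure $\mu_0$ to uniqueness of the bounded $\bar F_0$-invariant density $\bar\rho_0$ is justified by the saturation/projection machinery of Section~\ref{sec:construction-SRB}. Once this is in place, everything else is a soft compactness-and-subsequence argument, with Lemma~\ref{lem:rel-integration-unstab-leaves} and Remark~\ref{rem:Leb-Hm*ln-l0} providing the only quantitative estimates needed.
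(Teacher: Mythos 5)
Your argument follows the paper's own route almost step for step: the uniform bound $K$ from Lemma~\ref{lem:small-adjust} plus Banach--Alaoglu, Proposition~\ref{lem:rho-infty-invariance} to get $\bar F_0$-invariance of any weak* accumulation point, identification of that point with $\bar\rho_0$ by uniqueness, and the standard subsequence trick to upgrade to convergence of the full sequence against all $\phi\in L^1(\leb_{\hat\gamma_0})$. Your explicit check that the accumulation point has total mass one (Lemma~\ref{lem:rel-integration-unstab-leaves} with $\bar\phi\equiv 1$ and $\int\bar\rho_n\,d\bar m_n=1$, then testing \eqref{eq:banach-alaoglu-sequence} against $\phi\equiv 1$) is a detail the paper leaves implicit, and it is a worthwhile addition: without it, uniqueness of the invariant \emph{probability} would only identify $\varrho_\infty\circ\hat\pi_0^{-1}$ with a scalar multiple of $\bar\rho_0$.

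The one point where you genuinely diverge is the source of the uniqueness, and there -- as you yourself flag -- your argument has a gap. You want to deduce uniqueness of the bounded absolutely continuous $\bar F_0$-invariant probability from uniqueness of the SRB measure $\mu_0$ via the lift-and-saturate machinery of Section~\ref{sec:construction-SRB}. That deduction requires the correspondence $\bar\mu\mapsto\tilde\mu\mapsto\mu$ to be injective: two distinct invariant densities for $\bar F_0$ must yield distinct SRB measures after lifting and saturating. Nothing in Section~\ref{sec:construction-SRB} establishes this (in particular, recovering $\tilde\mu$, hence $\bar\mu$, from the saturated measure $\mu^*$ is never addressed, and it is not obvious, since the saturation superposes the pushforwards $f^l_*(\tilde\mu|\{R>l\})$, some of which may charge $\Lambda$ again). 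The paper sidesteps the issue entirely: it invokes uniqueness of the absolutely continuous invariant measure for the quotient map $\bar F_0$ itself, which is the standard fact for a full-branch expanding Markov map with bounded distortion -- each $\bar\Upsilon_i$ is mapped homeomorphically onto $\bar\Lambda_0$ and Lemma~\ref{lem:jac-def}(2) provides the distortion control, giving ergodicity (indeed exactness) of $\bar m_0$ and hence uniqueness of the a.c.i.m. Replacing your SRB-uniqueness step by this quotient-level uniqueness closes the gap, and the rest of your proof goes through unchanged; note in particular that the hypothesis that $f_0$ admits a unique SRB measure is not needed at this point of the argument.
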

\begin{proof}
By uniqueness of the absolutely continuous invariant measure for $\bar F$, it follows from Proposition~\ref{lem:rho-infty-invariance} that
$\bar\rho_0=\varrho_\infty\circ\hat\pi_0^{-1}$, which immediately yields $\varrho_\infty=\varrho_0$.  Hence
\begin{equation}
\label{eq:conv-densities-weak*}
\int\phi\varrho_{n_i}d\leb_{\hat\gamma_0}
\xrightarrow[i\rightarrow\infty]{}
 \int\phi\varrho_0 d\leb_{\hat\gamma_0},\quad
\text{for all $\phi$ continuous}.
\end{equation}
The same argument proves that any subsequence of $(\varrho_{n})_n$ has a weak* convergent subsequence with limit also equal to $\varrho_{0}$. This shows that $(\varrho_{n})_n$ itself converges to $\varrho_{0}$ in the weak* topology. Since continuous functions are dense in $L^1(\leb_{\hat\gamma_0})$, using that the densities $\varrho_n$ are uniformly bounded, by Lemma~\ref{lem:small-adjust}, the result follows easily from \eqref{eq:conv-densities-weak*}.
\end{proof}

\subsection{Continuity of the SRB measures}
For each $n\ge0$ let
$\tilde{\mu}_{n}$ be the  $F_{n}$- invariant measure lifted from $\bar{\mu}_{n}$
as in
\eqref{eq:def-bowen-measure},
$\mu^*_{n}$ the saturation of
$\tilde{\mu}_{n}$ as in
 \eqref{eq:saturation-definition},  and
$\mu_{n}=\mu^*_{n}/\mu^*_{n}(M)$ the SRB measure.
The main goal of this section is to prove the following result.
\begin{proposition}
\label{prop:convergence-SRB-measures} For every continuous
$g:M\rightarrow\R$,
\[
\int gd\mu^*_{n}\xrightarrow[i\rightarrow\infty]{}\int gd\mu^*_0.
\]
\end{proposition}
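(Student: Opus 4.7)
\textbf{Plan for Proposition~\ref{prop:convergence-SRB-measures}.} The plan is to write
\begin{equation*}
\int g\,d\mu^*_n = \sum_{l=0}^\infty \int g\circ f_n^l\,\I_{\{R_n>l\}}\,d\tilde\mu_n,
\end{equation*}
truncate the tail $l>N$ uniformly in $n$ via \eqref{u-uniform-tail}, and reduce each of the finitely many remaining terms to an integral against $\varrho_n$ on the reference leaf $\hat\gamma_0$, so that Corollary~\ref{co:conv-densities-weak*} delivers the conclusion. For the tail, Lemma~\ref{lem:small-adjust} gives $\|\bar\rho_n\|_\infty\le K$, and \eqref{u-uniform-constants} keeps the distortion factors $\hat u_n$ and the normalising constants $\leb_{\hat\gamma_n}(\Lambda_n)$ uniformly controlled; combined with $(\hat\pi_n)_*m_{\hat\gamma_n}=\bar m_n$ this yields a constant $K'$, independent of $n$, with $\tilde\mu_n(\{R_n=j\})\le K'\,\leb_{\hat\gamma_n}(\{R_n=j\})$. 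A Fubini-type summation then bounds the tail by $\|g\|_\infty K'\sum_{j>N}j\,\leb_{\hat\gamma_n}(\{R_n=j\})$, arbitrarily small for $N$ and $n$ large by \eqref{u-uniform-tail}; the finitely many $n\le J(\varepsilon,N)$ are handled by enlarging $N$, and the same estimate holds for $n=0$.

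For the head $\sum_{l=0}^N$ I fix $l\le N$ and invoke Proposition~\ref{prop:properties-bowen-measure}\eqref{properties-bowen-measure-4-discrete-times-cont} (extended to $L^1$ by Remark~\ref{rem:propeties-bowen-L1}) with $\psi=g\circ f_n^l$ and $\phi=\I_{\{R_n>l\}}$, the latter being constant on stable leaves since $R_n$ is:
\begin{equation*}
\int g\circ f_n^l\,\I_{\{R_n>l\}}\,d\tilde\mu_n = \int \bigl((g\circ f_n^l)\circ F_n^k\bigr)^*\bigl(\I_{\{R_n>l\}}\circ F_n^k\bigr)^*\,d\bar\mu_n + O\bigl(\var(g\circ f_n^l)(k)\bigr).
\end{equation*}
Since the $f_n$'s share a common Lipschitz constant on the relevant compact set (a consequence of $f_n\to f_0$ in $C^k$), I may pick $k=k(\varepsilon,N)$ making $\var(g\circ f_n^l)(k)<\varepsilon$ uniformly in $l\le N$ and in $n$. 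The right-hand side is then transported to $\hat\gamma_0$ by the recipe used in Lemma~\ref{lem:rel-integration-unstab-leaves}: write $\bar\mu_n=\bar\rho_n\bar m_n$, identify $\bar m_n$ via $\hat\pi_n$ with $\hat u_n\I_{\Lambda_n}\ln/\leb_{\hat\gamma_n}(\Lambda_n)$, slide via $H_n$ to $\hat\gamma_0$, and absorb the errors $\ln(\Omega_n\bigtriangleup\Omega_n^0)$ and the discrepancy between $(H_n)_*\ln$ and $\l0$ through \eqref{u-matching-cantor-sets} and Remark~\ref{rem:Leb-Hm*ln-l0}. What results is
\begin{equation*}
\int_{\hat\gamma_0}\Phi_{n,k,l}\,\varrho_n\,d\leb_{\hat\gamma_0}+o(1),
\end{equation*}
with $\Phi_{n,k,l}$ uniformly bounded.

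To finish, I verify that $\Phi_{n,k,l}\to\Phi_{0,k,l}$ in $L^1(\leb_{\hat\gamma_0})$ and then apply Corollary~\ref{co:conv-densities-weak*}, using the uniform $L^\infty$ bound on $\varrho_n$ from Lemma~\ref{lem:small-adjust}. The continuous factor $((g\circ f_n^l)\circ F_n^k)^*\circ\hat\pi_n\circ H_n^{-1}$ converges almost everywhere by Remark~\ref{rem:iterates-cont} together with \eqref{u-proximity-stable-direction}, which ensures that the stable-leaf discretisations stabilise. The indicator factor $(\I_{\{R_n>l\}}\circ F_n^k)^*$ is more delicate: its support decomposes as a countable disjoint union of $s$-sublattices $\Upsilon^n_{i_1,\dots,i_{k+1}}$ indexed by the itineraries for which the accumulated return time first exceeds $l$ at step $k+1$, and I would truncate it by imposing $\tau^n_{i_j}\le M$ (controlling the discarded piece again by \eqref{u-uniform-tail}) so that only finitely many sublattices remain, and then match each to its $f_0$-counterpart via \eqref{u-matching-s-sublattices}. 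Arranging this multi-level matching so that all discretisation, sublattice, and tail errors close uniformly in $n$, across the $k$ nested return times and the finitely many $l\le N$ terms, is the main obstacle; once organised, a standard $\varepsilon/3$ argument delivers convergence for each $l$ and, together with the tail bound, for the full sum.
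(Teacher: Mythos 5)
Your overall architecture coincides with the paper's: decompose $\mu^*_n$ via \eqref{eq:saturation-definition}, cut the tail uniformly with \eqref{u-uniform-tail}, reduce each head term with Proposition~\ref{prop:properties-bowen-measure}\eqref{properties-bowen-measure-4-discrete-times-cont} and Remark~\ref{rem:propeties-bowen-L1}, transport to $\hat\gamma_0$ by the mechanism of Lemma~\ref{lem:rel-integration-unstab-leaves}, and conclude with Corollary~\ref{co:conv-densities-weak*} together with an $L^1(\leb_{\hat\gamma_0})$-convergence of the remaining factors. (Your explicit bound $\tilde{\mu}_n\{R_n=j\}\leq K'\leb_{\hat\gamma_n}\{R_n=j\}$ and the uniform choice of $k$ making $\var(g\circ f_n^\ell)(k)$ small are fine, and indeed make explicit two points the paper passes over quickly.) The genuine gap lies exactly where the paper does its work, namely Lemmas~\ref{lem:choice-N5}, \ref{lem:control-of-E2} and \ref{lem:control-of-E3}. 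You assert that the continuous factor $\big((g\circ f_n^\ell)\circ F_n^k\big)^\bullet\circ H_n^{-1}$ converges almost everywhere by Remark~\ref{rem:iterates-cont} together with \eqref{u-proximity-stable-direction}. This is not justified: at a point $x$ this function involves $f_n^{R_n^1+\cdots+R_n^k+\ell}$ evaluated along the $f_n$-stable leaf through $H_n^{-1}(x)$, and the exponent depends on $n$ through the return times of $f_n$, whereas Remark~\ref{rem:iterates-cont} only compares $f_n^j\circ H_n^{-1}$ with $f_0^j$ for the \emph{same} $j$. One must first discard the points where some of the first $k$ return times (for $f_n$ or for $f_0$) exceed a threshold $N$ --- this is precisely Lemma~\ref{lem:choice-N5} --- and then match the surviving depth-$k$ $s$-sublattices through \eqref{u-matching-s-sublattices}, so that on matched pieces $F_n^k=f_n^{\tau_1+\cdots+\tau_k}$ and $F_0^k=f_0^{\tau_1+\cdots+\tau_k}$ with the \emph{same} $\tau_j$'s; only then do Remark~\ref{rem:iterates-cont}, \eqref{u-proximity-stable-direction} and uniform continuity of $g$ apply. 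So the sublattice machinery is needed for the continuous factor just as much as for the indicator factor, not only for the latter as your plan suggests; and the part you yourself flag as ``the main obstacle'' is left unexecuted, while it is exactly the content of Lemmas~\ref{lem:control-of-E2} and \ref{lem:control-of-E3}. As it stands the proof is therefore incomplete.

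A secondary inaccuracy: the support of $(\I_{\{R_n>\ell\}}\circ F_n^k)^*$ is the set where the $(k+1)$-st return time $R_n^{k+1}$ exceeds $\ell$, not the set where the accumulated return time first exceeds $\ell$ at step $k+1$. With the correct description, the decomposition into depth-$(k+1)$ sublattices and the matching \eqref{u-matching-s-sublattices} do work, and in fact on each matched pair $\Upsilon^0_{j_1,\ldots,j_{k+1}}\cap H_n\big(\Upsilon^n_{j_1,\ldots,j_{k+1}}\big)$ the two discretised indicators coincide exactly, since $R_n(\Upsilon^n_{j_{k+1}})=R_0(\Upsilon^0_{j_{k+1}})$; this is how Lemma~\ref{lem:control-of-E3} closes that estimate, and it removes the need for any delicate limit argument on the indicator factor.
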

\begin{proof}
As $M$ is compact, then $g$
is uniformly continuous and $\|g\|_\infty<\infty$.
Recalling  \eqref{eq:saturation-definition} we may write for all
$n\in\N_0$ and every integer $N_0$
\[
\mu^*_n=\sum_{\ell=0}^{N_0-1}\mu_n^\ell+\eta_n,
\] where
$\mu_n^\ell=f^\ell_*(\tilde{\mu_n}|\{R_n>\ell\})$ and
$\eta_n=\sum_{\ell\ge N_0} f^\ell_*(\tilde{\mu_n}|\{R_n>l\})$. By
\eqref{u-uniform-tail}, we may choose $N_0$ so that $\eta_n(M)$ is
as small as we want, for all $n\in\N_0$.
We are left to show that for every $\ell<N_0$, if $n$ is large enough then
\[
 \left| \int (g\circ f_{n}^\ell)
 \I_{\{R_{n}>\ell\}}d\tilde{\mu}_{n}-\int (g\circ f_{0}^\ell)
\I_{\{R_{0}>\ell\}}d\tilde{\mu}_{0}\right|
\] is arbitrarily small.
 We fix $\ell<N_0$ and take $k\in\N$ large so that
$\var(g(k))$ is sufficiently small. Then, we use Proposition
\ref{prop:properties-bowen-measure}
\eqref{properties-bowen-measure-4-discrete-times-cont} and its
Remark \ref{rem:propeties-bowen-L1} to reduce our problem to
controlling the following error term:
\[
E:=\left| \int (g\circ f_{n}^\ell\circ F_{n}^k)^*
(\I_{\{R_{n}>\ell\}}\circ F_{n}^k)^*d\bar{\mu}_{n}- \int (g\circ
f_{0}^\ell \circ F_{0}^k)^* (\I_{\{R_{0}>\ell\}}\circ F_{0}^k)^*
d\bar{\mu}_{0}\right|.
\]
Let $\varrho_0:\hat\gamma_0\to\R$ be such that $\varrho_0=\bar\rho_0\circ\hat\pi_0\cdot\I_{\Omega_0}$
and define
\begin{multline*}
E_0=\left| \int \left(\left((g\circ f_{n}^\ell \circ
F_{n}^k)^\bullet
(\I_{\{R_{n}>\ell \}}\circ F_{n}^k)^\bullet \right)\, \circ H_n^{-1}\right)
\varrho_n d\l0\right.\\
-\left. \int \left((g\circ f_{0}^\ell  \circ F_{0}^k)^\bullet
(\I_{\{R_{0}>\ell \}}\circ F_{0}^k)^\bullet \right)
\varrho_0\;d\l0\right|.
\end{multline*}
By Lemma~\ref{lem:rel-integration-unstab-leaves}, we have $E\leq
E_0+K\|g\|_\infty Q_n$.
Observe that by \eqref{u-matching-cantor-sets} and
Remark~\ref{rem:Leb-Hm*ln-l0} we may consider $n$ large
enough so that
$ K\|g\|_\infty Q_n$ is negligible.
Applying the triangular inequality we get
\begin{equation*}
\begin{split}
E_0 &\leq K \int\left|(g\circ f_{n}^\ell \circ F_{n}^k)^\bullet
\circ H_n^{-1} -(g\circ f_{0}^\ell  \circ
F_{0}^k)^\bullet \right|\I_{\Omega_0\cap\Omega_0^{n}}d\l0\\
&\quad +K\|g\|_\infty\int\left|(\I_{\{R_{n}>\ell \}}\circ
F_{n}^k)^\bullet  \circ H_n^{-1}-(\I_{\{R_{0}>\ell \}}\circ
F_{0}^k)^\bullet \right|\I_{\Omega_0\cap\Omega_0^{n}}d\l0
\\&\quad +\left|\int (g\circ f_{0}^\ell  \circ F_{0}^k)^\bullet
(\I_{\{R_{0}>\ell \}}\circ F_{0}^k)^\bullet \,
\I_{\Omega_0\cap\Omega_0^{n}}\,
\left(\varrho_{n}-\varrho_0\right)d\l0\right|.
\end{split}
\end{equation*}
By Corollary~\ref{co:conv-densities-weak*} the term
 $$
\left|\int (g\circ f_{0}^\ell  \circ F_{0}^k)^\bullet
(\I_{\{R_{0}>\ell \}}\circ F_{0}^k)^\bullet\,
\I_{\Omega_0\cap\Omega_0^{n}}\,
\left(\varrho_{n}-\varrho_0\right)d\l0\right|
$$
is as small as we want as long as $n$ is large enough. The analysis
of  the remaining terms
 $$
 \int\left|(g\circ f_{n}^\ell \circ F_{n}^k)^\bullet
 \circ H_n^{-1} -(g\circ
f_{0}^\ell  \circ F_{0}^k)^\bullet
\right|\I_{\Omega_0\cap\Omega_0^{n}}d\l0
$$
and
 $$
\int\left|(\I_{\{R_{n}>\ell \}}\circ F_{n}^k)^\bullet \circ
H_n^{-1}-(\I_{\{R_{0}>\ell \}}\circ F_{0}^k)^\bullet
\right|\I_{\Omega_0\cap\Omega_0^{n}}d\l0
$$
is left to  Lemmas \ref{lem:control-of-E2} and
\ref{lem:control-of-E3}, respectively.
\end{proof}

In the proofs of Lemmas \ref{lem:control-of-E2} and
\ref{lem:control-of-E3} we have to produce a suitable positive
integer $N$ so that returns that take longer than $N$ iterations are
negligible. The next lemma provides the tools for an adequate
choice. We consider the sequence of consecutive return times for  $z\in\Lambda$
\begin{equation}
  \label{eq:def-consecutive-returns}
  R^1(z)=R(z)\quad\mbox{and}\quad R^n(z)=R\left(f^{R^1+R^2+\ldots+R^{n-1}}(z)\right).
\end{equation}

\begin{lemma}
\label{lem:choice-N5} Given $k,N\in\N$
\[
\bar m\left(\left\{z\in \Lambda: \, \exists
t\in\{1,\ldots,k\}\,\text{such that }R^t(z)>N\right\}\right)\leq k
C_1\,\bar m(\{R>N\}).
\]
\end{lemma}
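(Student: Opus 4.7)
The plan is to reduce the statement to a single-step $\bar F$-invariance estimate by exploiting the Markov partition together with the bounded distortion property of Lemma~\ref{lem:jac-def}(3). First I would observe that, because $R$ is constant on each $s$-subset $\Upsilon_i$ and $F$ maps stable leaves into stable leaves, the function $R^t=R\circ F^{t-1}$ descends to the quotient $\bar\Lambda$ and coincides there with $\bar R\circ \bar F^{t-1}$. Consequently the set appearing in the statement identifies (modulo the projection $\bar\pi$) with $\bigcup_{t=1}^{k}(\bar F^{t-1})^{-1}(\{R>N\})$, and by subadditivity of $\bar m$ it suffices to prove, for each $1\leq t\leq k$, the single-step bound
\[
\bar m\!\left((\bar F^{t-1})^{-1}(\{R>N\})\right)\leq C_1\,\bar m(\{R>N\}).
\]

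To establish this inequality I would use the Markov structure: the family of $s$-sublattices $\{\bar\Upsilon_{i_1,\ldots,i_{t-1}}\}$ of length $t-1$ forms, up to $\bar m$-null sets, a partition of $\bar\Lambda$, and on each such cylinder $\bar F^{t-1}$ is a homeomorphism onto $\bar\Lambda$ with Jacobian $J\bar F^{t-1}$ relative to $\bar m$. Fix a cylinder $\bar\Upsilon=\bar\Upsilon_{i_1,\ldots,i_{t-1}}$ and a measurable set $A\subset\bar\Lambda$; the change of variables formula gives
\[
\bar m\!\left((\bar F^{t-1})^{-1}(A)\cap\bar\Upsilon\right)=\int_A \frac{1}{J\bar F^{t-1}\bigl((\bar F^{t-1}|_{\bar\Upsilon})^{-1}(y)\bigr)}\,d\bar m(y).
\]
Lemma~\ref{lem:jac-def}(3) controls $J\bar F^{t-1}$ on $\bar\Upsilon$ within a factor $C_1$, so its reciprocal is bounded pointwise by $C_1$ times its own minimum value on $\bar\Upsilon$; and taking $A=\bar\Lambda$ in the identity above one sees that this minimum is in turn bounded by the average $\bar m(\bar\Upsilon)/\bar m(\bar\Lambda)$. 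Combining these two observations yields
\[
\bar m\!\left((\bar F^{t-1})^{-1}(A)\cap\bar\Upsilon\right)\leq C_1\,\frac{\bar m(\bar\Upsilon)}{\bar m(\bar\Lambda)}\,\bar m(A).
\]

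Summing this inequality over all cylinders of length $t-1$, whose $\bar m$-masses add up to $\bar m(\bar\Lambda)$, gives the single-step estimate $\bar m((\bar F^{t-1})^{-1}(A))\leq C_1\,\bar m(A)$; setting $A=\{R>N\}$ and adding the resulting inequalities over $t=1,\ldots,k$ via the union bound yields the claim. There is no real obstacle here; the only point worth underlining is that the constant $C_1$ coming out of Lemma~\ref{lem:jac-def}(3) is \emph{uniform in the cylinder length}, which is precisely what makes the final bound linear in $k$ rather than growing exponentially, and the case $t=1$ is trivial since $\bar F^{0}=\mathrm{id}$ and $C_1\geq 1$.
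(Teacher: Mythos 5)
Your proof is correct and follows essentially the same route as the paper's: both decompose over the Markov cylinders (of length $t-1$ in your indexing, $t$ in the paper's), use the uniform distortion bound of Lemma~\ref{lem:jac-def}(3) to compare the relative mass of the preimage of $\{R>N\}$ inside each cylinder with $\bar m(\{R>N\})$, and then sum over cylinders and over $t$. The only cosmetic differences are that the paper disjointifies the union into the sets $B_t$ (first return time exceeding $N$ occurring at step $t+1$) instead of your plain union bound, and carries out the distortion step by picking comparison points $x,y$ in each cylinder rather than via your change-of-variables, sup--inf formulation; both yield the same bound $kC_1\,\bar m(\{R>N\})$.
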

\begin{proof}
We may write
\[
\left\{z\in \Lambda: \, \exists t\in\{1,\ldots,k\}\,\text{ such that
} R^t(z)>N\right\}=\bigcup_{t=0}^{k-1} B_t,
\]
where
\[
B_t=\left\{z\in \Lambda: \,R(z)\leq N,\ldots,R^{t}(z)\leq N,
R^{t+1}(z)>N\right\}.
\]
If $R(z)\leq N,\ldots,R^{t}(z)\leq N$ then
there exist $j_1,\ldots j_t\leq N$ with $R(\Upsilon_{j_l})\leq N$
for every $l=1,\ldots,t$ and $z\in \Upsilon_{j_1,\ldots,j_t}$.
Observe that $\bar F^t\left(\Upsilon_{j_1,\ldots,j_t}\right)
=\bar\Lambda$ and there
is $y\in\Upsilon_{j_1,\ldots,j_t}$ such that $\bar
m(\bar\Lambda)\leq J\bar F^t(y).\bar m (\Upsilon_{j_1,\ldots,j_t})$.
Also, there exists $x\in\Upsilon_{j_1,\ldots,j_t}\cap\bar
F^{-t}(\{R>N\})$ such that $\bar m(\{R>N\})\geq J\bar F^t(x).\bar
m(\Upsilon_{j_1,\ldots,j_t}\cap\bar F^{-t}(\{R>N\})$. Then, using
bounded distortion we obtain
\[
\frac{\bar m(\Upsilon_{j_1,\ldots,j_t}\cap\bar F^{-t}(\{R>N\})}{\bar
m (\Upsilon_{j_1,\ldots,j_t})}\leq \frac{J\bar F^t(y)}{ J\bar
F^t(x)}\frac{\bar m(\{R>N\})} {\bar m(\bar\Lambda)}\leq C_1\,\bar
m(\{R>N\}),
\]
Finally, we conclude that
\begin{align*}
|B_t|&= \sum_{j_1,\ldots,j_t :\,
            R(\Upsilon_{j_l})\leq N,\,l=1\ldots t}
            \bar m(\Upsilon_{j_1,\ldots,j_t}\cap\bar F^{-t}(\{R>N\})
            \\
            &\leq C_1\,\bar m(\{R>N\})\sum_{j_1,\ldots,j_t :\,
            R(\Upsilon_{j_l})\leq N,\,l=1\ldots t}
            \bar m (\Upsilon_{j_1,\ldots,j_t})\\
            &\leq C_1\,\bar m(\{R>N\}).
\end{align*}
\end{proof}

\begin{lemma}
\label{lem:control-of-E2} Given $\ell,k\in\N$ and  $\varepsilon>0$
there is $J\in\N$ such that for every $n>J$
\[\int\left|(g\circ f_{n}^\ell \circ F_{n}^k)^\bullet
 \circ H_n^{-1} -(g\circ
f_{0}^\ell  \circ F_{0}^k)^\bullet
\right|\I_{\Omega_0\cap\Omega_0^{n}}d\l0<\varepsilon.
\]
\end{lemma}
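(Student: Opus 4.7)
The plan is to decompose the integration domain $\Omega_0 \cap \Omega_0^n$ into three pieces: points whose first $k$ returns under $F_0$ or (via $H_n^{-1}$) under $F_n$ include a ``long'' return exceeding a threshold $N$; a finite collection of well-matched $s$-sublattices on which the integrand is pointwise small for large $n$; and a residual ``mismatch'' set. Since $g$ is continuous on the compact ambient region, $\|g\|_\infty$ is finite and $g$ has a modulus of continuity $\omega_g$, so the integrand is pointwise bounded by $2\|g\|_\infty$ and the bad pieces need only be controlled in $\l0$-measure.

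Applying the uniform tail \eqref{u-uniform-tail} together with Lemma~\ref{lem:choice-N5}, and transferring estimates from $\ln$ to $\l0$ via Remark~\ref{rem:Leb-Hm*ln-l0}, I choose $N$ so that the $\l0$-measure of the set where some $R_0^t(x)$ or $R_n^t(H_n^{-1}(x))$ with $t \leq k$ exceeds $N$ is less than $\varepsilon/(8\|g\|_\infty)$ for all large $n$; this long-return piece then contributes less than $\varepsilon/4$. By \eqref{P-Markov}(c) only finitely many $s$-sublattices $\Upsilon^{0}_{i_1,\ldots,i_k}$ of $\Lambda_0$ satisfy $R_0(\Upsilon^{0}_{i_j}) \leq N$ for every $j$, and \eqref{u-matching-s-sublattices} associates to each one a matched $\Upsilon^{n}_{\ell_1,\ldots,\ell_k}$ with identical return times whose symmetric difference $H_n(\Upsilon^{n}_{\ell_1,\ldots,\ell_k} \cap \hat\gamma_n) \triangle (\Upsilon^{0}_{i_1,\ldots,i_k} \cap \hat\gamma_0)$ tends to $0$ in $\l0$; summed over the finite family this mismatch piece contributes at most another $\varepsilon/4$ for $n$ large.

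On the remaining well-matched part, fix $x$ in a matched pair and let $m$ denote the common sum of return times, which satisfies $m \leq kN$. Then $F_0^k(x) = f_0^m(x)$ and $F_n^k(H_n^{-1}(x)) = f_n^m(H_n^{-1}(x))$, so the integrand reduces to $|A(x) - B(x)|$ with
\[
A(x) = \inf_{\xi \in \gamma_n^s(H_n^{-1}(x))} g(f_n^{\ell+m}(\xi)), \qquad B(x) = \inf_{\eta \in \gamma_0^s(x)} g(f_0^{\ell+m}(\eta)).
\]
Given $\eta \in \gamma_0^s(x)$, property \eqref{u-proximity-stable-direction} furnishes, for $n$ large, a point $\xi \in \gamma_n^s(H_n^{-1}(x))$ with $|\xi - \eta|$ uniformly small (parametrise both leaves over the same $D^s$ and read off the $C^1$ estimate); since $f_n \to f_0$ in $C^k$ and $\ell + m \leq \ell + kN$ is bounded, $f_n^{\ell+m}$ converges uniformly on compact subsets of $M$ to the Lipschitz map $f_0^{\ell+m}$, and a triangle estimate yields $|f_n^{\ell+m}(\xi) - f_0^{\ell+m}(\eta)| \to 0$ uniformly in $\eta$ and in $x$ within the finite family. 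Uniform continuity of $g$ and symmetrising in $A,B$ then give $|A(x) - B(x)| \leq \omega_g(r_n)$ for some $r_n \to 0$, so this piece contributes at most $\omega_g(r_n)\,\l0(\Omega_0) < \varepsilon/2$ for $n$ large. The subtle point is precisely this final step: the two infima are taken over \emph{different} stable manifolds under \emph{different} dynamics, and Remark~\ref{rem:iterates-cont} alone controls only the single reference point $H_n^{-1}(x)$, so one genuinely needs the $C^1$-proximity of stable leaves from \eqref{u-proximity-stable-direction} combined with the $C^0$-uniform convergence of finite iterates on compacts to compare the two infima.
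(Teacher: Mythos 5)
Your proposal is correct and follows essentially the same route as the paper's proof: the same three-way decomposition (long returns controlled by Lemma~\ref{lem:choice-N5} together with \eqref{u-uniform-tail}, sublattice mismatch controlled by \eqref{P-Markov}(c) and \eqref{u-matching-s-sublattices}, and a pointwise comparison on matched sublattices where the return-time sums coincide), and the same final mechanism of combining uniform continuity of $g$, uniform convergence of the finitely many iterates $f_n^j$, $j\le kN+\ell$, and the stable-leaf proximity \eqref{u-proximity-stable-direction} to compare the two infima, symmetrising as in the paper's interchange of $z$ and $\zeta$. The only differences are cosmetic (choice of $\varepsilon$-splitting and phrasing the iterate estimate via Lipschitz continuity of $f_0^{\ell+m}$ instead of the paper's uniform modulus for maps in $\F$).
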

\begin{proof}
We split the argument into three steps:
\begin{enumerate}
\item \label{item:control-E2-1} We appeal to Lemma \ref{lem:choice-N5}
to choose $N\in\N$ sufficiently large so that the set
$$L:=\left\{x\in\Omega_0\cap\Omega_0^{n}:\,\exists
t\in\{1,\ldots,k\}\, R_0^t(x)>N \,\mbox{or}\,
R_{n}^t(x)>N\right\}$$ has sufficiently small mass.

\item \label{item:control-E2-2} We pick $J\in\N$ large enough to
guarantee that, according to condition
\eqref{u-matching-s-sublattices}, for every $k$ positive integers
$j_1,\ldots,j_k$ such that $R_0(\Upsilon^0_{j_l})\leq N$, for all
$i=1,\ldots,k$, each set $\Upsilon^0_{j_1,\ldots,j_k}$ and its
corresponding $\Upsilon^{n}_{j_1,\ldots,j_k}$ satisfy the condition:
$\Upsilon^0_{j_1,\ldots,j_k}\bigtriangleup
H_{n}\left(\Upsilon^{n}_{j_1,\ldots,j_k}\right)$ has sufficiently
small conditional Lebesgue measure.

\item \label{item:control-E2-3} Finally,
in each set $\Upsilon^0_{j_1,\ldots,j_k}\cap
H_{n}\left(\Upsilon^{n}_{j_1,\ldots,j_k}\right)$ we  control
\[
\left|(g\circ f_{n}^\ell \circ F_{n}^k)^\bullet
 \circ H_n^{-1} -(g\circ
f_{0}^\ell  \circ F_{0}^k)^\bullet \right|.
\]
\end{enumerate}

\noindent Step \eqref{item:control-E2-1}: From Lemma
\ref{lem:choice-N5} we have $|L|\leq  k
C_1.\left(\l0(\{R_0>N\})+\ln(\{R_{n}>N\})\right)$. So, by assumption
\eqref{u-uniform-tail}, we may choose $N$ and $J$ large enough so
that
\[
2\|g\|_\infty k
C_1.\left(\l0(\{R_0>N\})+\ln(\{R_{n}>N\})\right)<\frac{\varepsilon}{3},
\]
which implies that
\[
\int_{L}\left|(g\circ f_{n}^\ell \circ F_{n}^k)^\bullet
 \circ H_n^{-1} -(g\circ
f_{0}^\ell  \circ F_{0}^k)^\bullet
\right|\I_{\Omega_0\cap\Omega_0^{n}}d\l0< \frac{\varepsilon}{3}.
\]

\noindent Step \eqref{item:control-E2-2}: By \eqref{P-Markov}(c) it
is possible to define $V=V(N,k)$ as the total number of sets
$\Upsilon_{j_1,\ldots,j_k}$ such that $R(\Upsilon_{j_l})\leq N$ for
all $i=1,\ldots,k$. Now, using \eqref{u-matching-s-sublattices}, we
may choose $J$ so that for every $n>J$ and
$\Upsilon_{j_1,\ldots,j_k}^0$ such that $R_0(\Upsilon_{j_l}^0)\leq
N$ for all $i=1,\ldots,k$ then the corresponding
$\Upsilon_{j_1,\ldots,j_k}^{n}$ is such that
\[
\l0\left(\Upsilon^0_{j_1,\ldots,j_k}\bigtriangleup
H_{n}\left(\Upsilon^{n}_{j_1,\ldots,j_k}\right)\right)<
\frac{\varepsilon}{3}\, V^{-1}\,(2\max\{1,\|g\|_\infty\})^{-1}.
\]
Under these circumstances we have
\[
\sum_{\begin{tabular}{c}
            {\tiny $j_1,\ldots,j_k$}:\\
            {\tiny $R_0(\Upsilon_{j_l}^0)\leq N$}\\
            {\tiny $l=1,\ldots,k$}
            \end{tabular}}
\int_{\Upsilon^0_{j_1,\ldots,j_k}\bigtriangleup
H_{n}\left(\Upsilon^{n}_{j_1,\ldots,j_k}\right)} \left|(g\circ
f_{n}^\ell \circ F_{n}^k)^\bullet
 \circ H_n^{-1} -(g\circ
f_{0}^\ell  \circ F_{0}^k)^\bullet
\right|\I_{\Omega_0\cap\Omega_0^{n}}d\l0< \frac{\varepsilon}{3}.
\]

\noindent Step \eqref{item:control-E2-3}: For each $i=1,\ldots,k$,
let $\tau_{j_i}=R_0(\Upsilon_{j_i}^0)$.
 In each set
$\Upsilon_{j_1,\ldots,j_k}^0\cap \Upsilon_{j_1,\ldots,j_k}^{n}$ we
have that $F_0^k=f_0^{\tau_1+\ldots+\tau_k}$ and
$F_{n}^k=f_{n}^{\tau_1+\ldots+\tau_k}$. Since $M$ is compact, each
$f_n$ is $C^k$ and $f_n\to f_0$, as $n\to\infty$, in the $C^k$
topology then
\begin{itemize}
\item there exists $\vartheta>0$ such that $|z-\zeta|<\vartheta\Rightarrow
|g(z)-g(\zeta)|<\frac{\varepsilon}{3}\,V^{-1}$;

\item there exists $J_1$ such that for all $n>J_1$ and $z\in M$ we have
$$\max\left\{|f_0(z)-f_{n}(z)|,\ldots,
|f_0^{kN+l}(z)-f_{n}^{kN+l}(z)|\right\}<\tfrac\vartheta2;$$

\item there exists $\eta>0$ such that for all $z,\zeta\in M$ and $f\in\F$
 $$|z-\zeta|
<\eta\;\Rightarrow\;\max\left\{|f(z)-f(\zeta)|,\ldots,
|f^{kN+l}(z)-f^{kN+l}(\zeta)|\right\}<\tfrac\vartheta2.$$

\end{itemize}
Furthermore, according to \eqref{u-proximity-stable-direction},
\begin{itemize}
\item there is $J_2$ such that for every $n>J_2$ and $x
\in\Omega_0\cap\Omega_0^{n}$ we have
\[
\left|
\gamma_0^s(x)-\gamma_{n}^s(x)\right|_{C^1}<\eta.
\]
\end{itemize}

Let $n>\max\{J_1,J_2\}$, $z\in\gamma_0^s(x)$ and
 take $\zeta\in\gamma_{n}^s(x)$ such that $|z-\zeta|<\eta$.
This together with the choices of
$\eta$ and $J_1$ implies
\begin{equation*}
\begin{split}
\left|f_0^\ell\circ F_0^k(z)-f_{n}^\ell\circ F_{n}^k(\zeta)
\right|&\leq
\left|f_0^{\tau_1+\ldots+\tau_k+l}(z)-f_{0}^{\tau_1+\ldots+\tau_k+l}(\zeta)
\right|\\&\quad+
\left|f_0^{\tau_1+\ldots+\tau_k+l}(\zeta)-f_{n}^{\tau_1+\ldots+\tau_k+l}(\zeta)
\right|
\\
&< \vartheta/2+\vartheta/2=\vartheta.
\end{split}
\end{equation*}
Finally, the above considerations and the choice of $\vartheta$
allow us to conclude that for every $n>\max\{J_1,J_2\}$,
$x\in\Omega_0\cap\Omega_0^{n}$ and
$z\in\gamma_0^s(x)$, there exists
$\zeta\in\gamma_{n}^s(x)$ such that
\begin{equation}
\label{eq:control-E2-5} \left|g(f_{n}^\ell\circ
F_{n}^k(\zeta))-g(f_0^\ell\circ F_0^k(z))\right|<
\frac{\varepsilon}{3}\,V^{-1}.
\end{equation}
Attending to \eqref{eq:def-discretization}, \eqref{eq:control-E2-5}
and the fact that we can interchange the roles of $z$ and $\zeta$ in
the latter, we obtain that for every $n>\max\{J_1,J_2\}$
\[
\left|(g\circ f_{n}^\ell \circ F_{n}^k)^\bullet
 \circ H_n^{-1} -(g\circ
f_{0}^\ell  \circ F_{0}^k)^\bullet
\right|<\frac{\varepsilon}{3}\,V^{-1},
\]
from where we deduce that
\[
\sum_{\begin{tabular}{c}
           {\tiny $j_1,\ldots,j_k$}\\
            {\tiny $R_0(\Upsilon_{j_l}^0)\leq N$}\\
            {\tiny $1\le l\le k$}
            \end{tabular}}
\int_{\Upsilon^0_{j_1,\ldots,j_k}\bigtriangleup
H_{n}\left(\Upsilon^{n}_{j_1,\ldots,j_k}\right)} \left|(g\circ
f_{n}^\ell \circ F_{n}^k)^\bullet
 \circ H_n^{-1} -(g\circ
f_{0}^\ell  \circ F_{0}^k)^\bullet
\right|\I_{\Omega_0\cap\Omega_0^{n}}d\l0< \frac{\varepsilon}{3}.
\]
\end{proof}

\begin{lemma}
\label{lem:control-of-E3} Given $l,k\in\N$ and  $\varepsilon>0$
there exists $J\in\N$ such that for every $n>J$
\[
\int\left|(\I_{\{R_{n}>\ell \}}\circ F_{n}^k)^\bullet \circ
H_n^{-1}-(\I_{\{R_{0}>\ell \}}\circ F_{0}^k)^\bullet
\right|\I_{\Omega_0\cap\Omega_0^{n}}d\l0 <\varepsilon.
\]
\end{lemma}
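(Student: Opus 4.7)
My plan is to mirror the three-step structure of Lemma~\ref{lem:control-of-E2}, but exploit a crucial simplification: the integrand here is an indicator function that is piecewise constant on the Markov structure, so no uniform continuity of iterates is needed. The key observation is that for any tuple $(j_1,\ldots,j_{k+1})$ of positive integers the composition $\I_{\{R_n>\ell\}}\circ F_n^k$ is constant on $\Upsilon^n_{j_1,\ldots,j_{k+1}}$, equal to $\I_{\{\tau_{j_{k+1}}>\ell\}}$, because $F_n^k$ maps $\Upsilon^n_{j_1,\ldots,j_{k+1}}$ onto $\Upsilon^n_{j_{k+1}}$ where $R_n\equiv\tau_{j_{k+1}}$. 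Since $s$-sublattices are saturated by stable leaves, this function is constant on the $\gamma^s$-leaves through each $\Upsilon^n_{j_1,\ldots,j_{k+1}}$, and therefore the discretization $(\,\cdot\,)^\bullet$ agrees with the function itself on such sets.

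First I would apply Lemma~\ref{lem:choice-N5} together with the uniform tail condition \eqref{u-uniform-tail}, for both $f_0$ and $f_n$, to pick $N=N(\varepsilon)$ and $J_0$ so that for every $n>J_0$ the set
\[
L_n=\left\{x\in\Omega_0\cap\Omega_0^n:\exists\, t\in\{1,\ldots,k+1\},\ R_0^t(x)>N\ \text{or}\ R_n^t(H_n^{-1}(x))>N\right\}
\]
has $\l0$-measure less than $\varepsilon/2$. Because the integrand is bounded by $1$ in absolute value, this immediately controls the contribution from $L_n$.

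Second, I would enumerate the finitely many tuples $(j_1,\ldots,j_{k+1})$ with $R_0(\Upsilon^0_{j_l})\le N$ for $1\le l\le k+1$, these being finite by \eqref{P-Markov}(c), and apply \eqref{u-matching-s-sublattices} with $k+1$ in place of $k$ to obtain a matching $(\iota_1,\ldots,\iota_{k+1})$ with $R_n(\Upsilon^n_{\iota_l})=R_0(\Upsilon^0_{j_l})$ for every $l$ and with the symmetric difference of $H_n(\Upsilon^n_{\iota_1,\ldots,\iota_{k+1}}\cap\hat\gamma_n)$ and $\Upsilon^0_{j_1,\ldots,j_{k+1}}\cap\hat\gamma_0$ arbitrarily small in $\l0$-measure for $n$ large; I would choose $J\ge J_0$ so that the sum over all these finitely many symmetric differences contributes less than $\varepsilon/2$. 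On the complementary matched intersections $H_n(\Upsilon^n_{\iota_1,\ldots,\iota_{k+1}}\cap\hat\gamma_n)\cap\Upsilon^0_{j_1,\ldots,j_{k+1}}\cap\hat\gamma_0$, both discretizations collapse to the common value $\I_{\{\tau_{j_{k+1}}>\ell\}}=\I_{\{\tau_{\iota_{k+1}}>\ell\}}$ by the observation above, so the integrand vanishes identically there. Summing all contributions yields the desired bound $\varepsilon$.

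In contrast with Lemma~\ref{lem:control-of-E2}, there is no genuine analytic obstacle here, since Remark~\ref{rem:iterates-cont} and the uniform continuity of $g$ play no role for an indicator function. The only subtle bookkeeping point I anticipate is the switch from depth $k$ to depth $k+1$ in the Markov partition: this is precisely what makes $\I_{\{R>\ell\}}\circ F^k$ piecewise constant on the cells being matched by \eqref{u-matching-s-sublattices}, and it also forces the use of Lemma~\ref{lem:choice-N5} with $k+1$ rather than $k$ consecutive returns. Once this adjustment is made, the argument reduces to the Markov matching already carried out in the previous lemma.
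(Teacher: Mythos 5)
Your proposal is correct and follows essentially the same route as the paper's proof: a tail estimate via Lemma~\ref{lem:choice-N5} applied to $k+1$ consecutive returns, matching of the depth-$(k+1)$ sublattices through \eqref{u-matching-s-sublattices}, and the observation that on the matched intersections $\Upsilon^0_{j_1,\ldots,j_{k+1}}\cap H_n\bigl(\Upsilon^n_{\iota_1,\ldots,\iota_{k+1}}\bigr)$ both terms reduce to the same value $\I_{\{\tau_{j_{k+1}}>\ell\}}$, so the integrand vanishes there. Your extra remark that $\I_{\{R_n>\ell\}}\circ F_n^k$ is constant on stable leaves through each depth-$(k+1)$ cylinder, so that the discretization $(\,\cdot\,)^\bullet$ coincides with the function itself, merely makes explicit what the paper leaves implicit in its final step.
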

\begin{proof}
As in the proof of Lemma \ref{lem:control-of-E2}, we divide the
argument into three steps.

\eqref{item:control-E2-1} The condition on $N$: Consider the set
\[
L_1=\left\{x\in\Omega_0\cap\Omega_0^{n}:\,\exists
t\in\{1,\ldots,k+1\}\,\mbox{ such that }\, R_0^t(x)>N \,\mbox{ or
}\, R_{n}^t(x)>N\right\}.
\]
From Lemma \ref{lem:choice-N5} we have  $|L_1|\leq  (k+1)
C_1.\left(\l0(\{R_0>N\})+\ln(\{R_{n}>N\})\right)$. So we choose $N$
large enough so that
\[
2\|g\|_\infty (k+1)
C_1.\left(\l0(\{R_0>N\})+\ln(\{R_{n}>N\})\right)<\frac{\varepsilon}{3},
\]
which implies that
\[
\int_{L_1}\left|(\I_{\{R_{n}>\ell \}}\circ F_{n}^k)^\bullet \circ
H_n^{-1}-(\I_{\{R_{0}>\ell \}}\circ F_{0}^k)^\bullet
\right|\I_{\Omega_0\cap\Omega_0^{n}}d\l0< \frac{\varepsilon}{3}.
\]

\eqref{item:control-E2-2} Let as before $V=V(N,k+1)$ be the total
number of sets $\Upsilon_{j_1,\ldots,j_{k+1}}$ such that
$R(\Upsilon_{j_i})\leq N$ for all $i=1,\ldots,k+1$. Now, using
\eqref{u-matching-s-sublattices}, we may choose $J$ so that for
every $n>J$ and $\Upsilon_{j_1,\ldots,j_{k+1}}^0$ such that
$R_0(\Upsilon_{j_i}^0)\leq N$ for all $i=1,\ldots,k+1$ then the
corresponding $\Upsilon_{j_1,\ldots,j_{k+1}}^{n}$ is such that
\[
\l0\left(\Upsilon^0_{j_1,\ldots,j_{k+1}}\bigtriangleup
H_{n}\left(\Upsilon^{n}_{j_1,\ldots,j_{k+1}}\right)\right)<
\frac{\varepsilon}{3}\, V^{-1}\,(2\max\{1,\|g\|_\infty\})^{-1}.
\]
Let
$L_2=\Upsilon^0_{j_1,\ldots,j_{k+1}}\bigtriangleup
H_{n}\left(\Upsilon^{n}_{j_1,\ldots,j_{k+1}}\right)$ and observe that
\[
\sum_{\begin{tabular}{c}
            {\tiny $j_1,\ldots,j_{k+1}$}:\\
            {\tiny $R_0(\Upsilon_{j_l}^0)\leq N$}\\
            {\tiny $l=1,\ldots,k+1$}
            \end{tabular}}
\int_{L_2} \left|(\I_{\{R_{n}>\ell \}}\circ F_{n}^k)^\bullet \circ
H_n^{-1}-(\I_{\{R_{0}>\ell \}}\circ F_{0}^k)^\bullet
\right|\I_{\Omega_0\cap\Omega_0^{n}}d\l0< \frac{\varepsilon}{3}.
\]

\eqref{item:control-E2-3} At last, notice that in each set
$\Upsilon^0_{j_1,\ldots,j_{k+1}}\cap
H_{n}\left(\Upsilon^{n}_{j_1,\ldots,j_{k+1}}\right)$ we have
\[
\left|(\I_{\{R_{n}>l\}}\circ F_{n}^k)^\bullet\circ
H_{n}^{-1}-(\I_{\{R_{0}>l\}}\circ F_{0}^k)^\bullet\right|=0,
\]
which gives the result.
\end{proof}

\section{Entropy continuity}\label{sec:entropy-continuity}

In Proposition~\ref{prop:entropy-formula} we have seen that the SRB entropy can be written just in terms of the quotient dynamics.  Our aim now is to show that the integrals appearing in that formula are close for nearby dynamics, and this is the content of Proposition~\ref{prop:convergence-entropy}.
Notice that since the integrands are not necessarily continuous functions, the continuity of the integrals is not an immediate consequence of the statistical stability.

\subsection{Auxiliary results}\label{subsec:auxiliary-lemmas}

\begin{lemma}\label{lem:l1l1}
Let $(\varphi_n)_{n\in\N}$ be a bounded sequence of $m$-measurable
functions defined on $M$ belonging to $L^\infty(m)$. If
$\varphi_n\to\varphi$ in the $L^1(m)$-norm and $\psi\in L^1(m)$,
then
\[
\int\psi(\varphi_n-\varphi)dm\to0, \quad\mbox{when $n\to\infty$}.
\]
\end{lemma}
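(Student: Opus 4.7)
The plan is to reduce to the trivial case where the test function is bounded by truncating $\psi$, exploiting that the sequence $(\varphi_n)$ and its limit are uniformly bounded in $L^\infty(m)$.

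First, I would set $C:=\sup_{n\in\N}\|\varphi_n\|_\infty<\infty$. Since $\varphi_n\to\varphi$ in $L^1(m)$, a subsequence converges $m$-a.e.\ to $\varphi$, so $\|\varphi\|_\infty\le C$ as well. Hence $|\varphi_n-\varphi|\le 2C$ $m$-almost everywhere, uniformly in $n$.

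Next, for each $k\in\N$ consider the truncation $\psi_k=\psi\cdot\I_{\{|\psi|\le k\}}$. Then $\psi_k\in L^\infty(m)\cap L^1(m)$ with $\|\psi_k\|_\infty\le k$, and dominated convergence gives $\|\psi-\psi_k\|_1\to0$ as $k\to\infty$. Writing
\begin{equation*}
\int\psi(\varphi_n-\varphi)\,dm=\int(\psi-\psi_k)(\varphi_n-\varphi)\,dm+\int\psi_k(\varphi_n-\varphi)\,dm,
\end{equation*}
I bound the first term by $2C\,\|\psi-\psi_k\|_1$ and the second by $k\,\|\varphi_n-\varphi\|_1$. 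Given $\varepsilon>0$, I first choose $k$ so that $2C\,\|\psi-\psi_k\|_1<\varepsilon/2$, and then, with $k$ fixed, I use the $L^1$-convergence $\varphi_n\to\varphi$ to pick $N$ so that $k\,\|\varphi_n-\varphi\|_1<\varepsilon/2$ for every $n\ge N$. Combining these estimates yields $\big|\int\psi(\varphi_n-\varphi)\,dm\big|<\varepsilon$ for $n\ge N$, which is the desired conclusion.

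There is no real obstacle here: the statement is a standard two-parameter $\varepsilon/2$-argument built on the classical duality between $L^1$ and $L^\infty$, and the only point that needs a moment of care is transferring the uniform $L^\infty$-bound from the sequence to the limit, which is handled by passing to an a.e.-convergent subsequence.
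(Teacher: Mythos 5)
Your proof is correct, but it takes a different (equally elementary) route from the paper. You decompose the test function, writing $\psi=\psi_k+(\psi-\psi_k)$ with the truncation $\psi_k=\psi\,\I_{\{|\psi|\le k\}}$, and then exploit the uniform bound $|\varphi_n-\varphi|\le 2C$ on one piece and the $L^1$-convergence against the bounded piece $\psi_k$ on the other; the only subtlety, passing the uniform $L^\infty$-bound to the limit $\varphi$ via an a.e.-convergent subsequence, is handled correctly. The paper instead decomposes the domain: by absolute continuity of the integral of $\psi$ it picks $\delta>0$ with $\int_B|\psi|\,dm<\varepsilon/(4C)$ whenever $m(B)<\delta$, then sets $B_n=\{|\varphi_n-\varphi|>\varepsilon/(2\|\psi\|_1)\}$, notes (essentially by Chebyshev) that $m(B_n)<\delta$ for large $n$, and estimates the integral separately on $B_n$ and its complement. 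Both are standard $\varepsilon/2$-arguments resting on the same $L^1$--$L^\infty$ duality; your truncation version avoids invoking absolute continuity of the integral and any estimate on the measure of a bad set (and also sidesteps the paper's implicit assumption $\|\psi\|_1>0$), while the paper's version works with $\psi$ untouched and localizes the smallness where $\varphi_n-\varphi$ is large. Either argument is complete and acceptable.
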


\begin{proof}
Take any $\varepsilon>0$. Let $C>0$ be an upper bound for
$\|\varphi_n\|_\infty$. Since $\psi\in L^1(m)$, there is $\delta>0$
such that for any Borel set $B\subset M$
\begin{equation}
\label{eq:lema-l1l1} m(B)<\delta\quad\Rightarrow\quad \int_B
|\psi|dm<\frac\varepsilon{4C}.
\end{equation}
Define for each $n\geq1$
\[
B_n=\left\{x\in M:
|\varphi_n(x)-\varphi_0(x)|>\frac\varepsilon{2\|\psi\|_1}\right\}.
\]
Since $\|\varphi_n-\varphi_0\|_1\to0$ when $n\to\infty$, then there
is $n_0\in\N$ such that $m(B_n)<\delta$ for every $n\geq n_0$.
Taking into account the definition of $B_n$, we may write
\begin{align*}
  \int |\psi||\varphi_n-\varphi_0|dm&=\int_{B_n}|\psi||\varphi_n-\varphi_0|
  dm+\int_{M\setminus B_n} |\psi||\varphi_n-\varphi_0|dm\\
  &\leq2C\int_{B_n}|\psi|dm+\frac
  \varepsilon{2\|\psi\|_1}\int_{M\setminus B_n}|\psi|dm.
\end{align*}
Then, using \eqref{eq:lema-l1l1}, this last sum is upper bounded by
$\varepsilon$, as long as $n\geq n_0$.
\end{proof}

\begin{lemma}\label{lem:aux-lemma-bound-JF} There is $C_2>0$ such that
$\log J\bar F_n\le C_2\,R_n$ for every $n\ge 0$.
\end{lemma}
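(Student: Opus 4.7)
The plan is to start from the pointwise formula for the Jacobian established in equation~\eqref{eq:jacobians-rel}. By Lemma~\ref{lem:jac-def}(1), $JF_n$ is constant along stable leaves, so it descends to $J\bar F_n$ on $\bar\Lambda_n$. Taking logarithms yields
$$\log J\bar F_n(x) \;=\; \log|\det DF_n^u(x)| + \log\hat u_n(F_n(x)) - \log\hat u_n(x),$$
and the task reduces to bounding each of these three pieces in terms of $R_n$.

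First I would control the distortion terms. Applying \eqref{P-regularity-s-foliation}(a) at $n=0$ to the pair $(x,\hat x)$, and then with the roles of $x$ and $\hat x$ interchanged, yields $|\log\hat u_n(z)|\le C$ for every $z\in\Lambda_n$, where $C$ is the absolute constant provided by \eqref{u-uniform-constants}. Consequently $|\log\hat u_n(F_n(x))-\log\hat u_n(x)|\le 2C$ uniformly in $n$, contributing only an additive constant.

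Next I would handle the derivative factor. Since $F_n=f_n^{R_n}$ on each $\Upsilon_i^n$, the chain rule along unstable leaves gives
$$\log|\det DF_n^u(x)| \;=\; \sum_{j=0}^{R_n(x)-1}\log|\det Df_n^u(f_n^j(x))| \;\le\; A\cdot R_n(x),$$
with $A:=\sup_{n\ge 0}\sup_{z}\log|\det Df_n^u(z)|$. Combining this with the previous step and using $R_n\ge 1$ then gives $\log J\bar F_n(x)\le (A+2C)\,R_n(x)$, so the choice $C_2:=A+2C$ does the job.

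The only point that is not pure bookkeeping is the finiteness of $A$, i.e.\ the uniform upper bound on $\log|\det Df_n^u|$ across the family. This is, however, a soft consequence of the standing hypotheses: $\F$ is endowed with the $C^k$ topology and $f_n\to f_0$ in $C^k$, so $\|Df_n\|_\infty$ is uniformly bounded on the compact trapping region containing $\Lambda_n$, whence $\log|\det Df_n^u|$ is uniformly bounded above. Thus the lemma is essentially a combination of \eqref{eq:jacobians-rel}, \eqref{P-regularity-s-foliation}(a), and the $C^k$ proximity built into the notion of a uniform family; no new ingredient is needed.
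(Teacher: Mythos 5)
Your proposal is correct and follows essentially the same route as the paper: both start from \eqref{eq:jacobians-rel}, bound the two $\hat u$ terms by an additive constant $2C$ via \eqref{P-regularity-s-foliation}(a), and bound $\log|\det DF_n^u|$ by $R_n$ times a uniform bound on $\log|\det Df_n^u|$ obtained from compactness of $M$ and the $C^1$ convergence $f_n\to f_0$, finally absorbing the additive constant using $R_n\ge 1$. The only cosmetic difference is that the paper takes $C_2=\log L+2C$ with $L$ a uniform bound on $|\det Df_n^u|$, which matches your $C_2=A+2C$.
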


\begin{proof} Define   $L_n=\max_{x\in M}\{|\det Df_n^u(x)|\}$, for each $n\ge 0$.
  By the compactness  of $M$ and the
continuity on the first order derivative,  there is $L > 1$ such
that $L_n\le L$ for all $n\ge 0$. We have
 $$
 |\det D(\f_n)^u(x)|=\prod_{j=0}^{R_n(x)-1}|\det Df_n^u(f_n^j(x))|
 \le L^{R_n(x)}.
 $$
By \eqref{eq:jacobians-rel} it follows that
\[
\log J(F_n)(x)=\log|\det DF_n^u(x)|+\log \hat u(F_n(x))-\log \hat
u(x).
\]
Observing that by \eqref{P-regularity-s-foliation}(a) it follows
that $|\log \hat u(F_n(x))-\log \hat u(x)|\leq 2C\beta^0=2C$, we have
 $$\log J(F_n)(x) \le R_n(x)\log L+2C.$$
To conclude, we take  $C_2=\log L+2C$.
\end{proof}

\begin{lemma}\label{le.impu1}
Given $\varepsilon>0$, there
is $J\in\N$ such that for all $n>J$
\[
\int_{\Omega^n_0\cap\Omega_0}|R_n-R_0|\,d\l0\leq\varepsilon
\]
 \end{lemma}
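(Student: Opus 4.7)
The plan is to leverage the uniform tail estimate \eqref{u-uniform-tail} to dispose of contributions coming from points with long returns, and then use the matching of $s$-sublattices \eqref{u-matching-s-sublattices} to show that, on the complementary bulk where both returns are short, the integrand $|R_n - R_0|$ vanishes except on a set of negligible measure.

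First, fix $\varepsilon>0$. Using \eqref{u-uniform-tail}, I would pick $N\in\N$ and $J_0\in\N$ so that, for $n=0$ and for all $n\ge J_0$,
\[
\sum_{j=N+1}^{\infty} j\,\leb_{\hat\gamma_n}(\{R_n=j\}) < \varepsilon',
\]
for some $\varepsilon'$ depending on $\varepsilon$ and the uniform bound on the density $d(H_n)_*\ln/d\l0$ provided by Remark~\ref{rem:Leb-Hm*ln-l0}. By \eqref{P-Markov}(c) the set $I_N=\{i:R_0(\Upsilon_i^0)\le N\}$ is finite; write $V=|I_N|$. For each $i\in I_N$, apply \eqref{u-matching-s-sublattices} with $k=1$ to produce an index $\ell(i)$ with $R_n(\Upsilon_{\ell(i)}^n)=R_0(\Upsilon_i^0)$ and, for all $n\ge J_1$ (some $J_1\ge J_0$),
\[
\l0\bigl(H_n(\Upsilon_{\ell(i)}^n\cap\hat\gamma_n)\,\triangle\,(\Upsilon_i^0\cap\hat\gamma_0)\bigr) < \frac{\varepsilon}{4NV}.
\]

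Next, I would split $\Omega_0^n\cap\Omega_0$ into the ``long return'' piece $D_1=\{R_0>N\}\cup\{R_n\circ H_n^{-1}>N\}$ and the ``short return'' piece $D_2=\{R_0\le N\}\cap\{R_n\circ H_n^{-1}\le N\}$. On $D_1$, use $|R_n-R_0|\le R_n\circ H_n^{-1}+R_0$ and split each summand further according to whether the corresponding return exceeds $N$. The contribution from $R_0$ on $\{R_0>N\}$ is bounded directly by the chosen tail; the contribution on $\{R_0\le N, R_n\circ H_n^{-1}>N\}$ is bounded by $N\cdot\l0(\{R_n\circ H_n^{-1}>N\})$, which, via the bounded-density change of variables in Remark~\ref{rem:Leb-Hm*ln-l0}, is controlled by a constant times $\ln(\{R_n>N\})\le \varepsilon'/N$. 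The symmetric arguments dispose of $R_n\circ H_n^{-1}$.

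On $D_2$, every $x$ lies in a unique $\Upsilon_i^0\cap\hat\gamma_0$ with $i\in I_N$, and $H_n^{-1}(x)$ lies in a unique $\Upsilon_m^n\cap\hat\gamma_n$ with $R_n(\Upsilon_m^n)\le N$. On the matched part $H_n(\Upsilon_{\ell(i)}^n\cap\hat\gamma_n)\cap(\Upsilon_i^0\cap\hat\gamma_0)$ one has $R_n\circ H_n^{-1}=R_n(\Upsilon_{\ell(i)}^n)=R_0(\Upsilon_i^0)=R_0$, so the integrand vanishes; on the remaining set, $x$ must lie in the symmetric difference produced by \eqref{u-matching-s-sublattices} for some $i\in I_N$, and there $|R_n-R_0|\le 2N$. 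Hence the integral over $D_2$ is bounded by $2N\sum_{i\in I_N}\l0(\text{sym.\ diff.})\le 2N\cdot V\cdot \varepsilon/(4NV)=\varepsilon/2$, and summing with the $D_1$ estimate yields the conclusion.

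The main obstacle I anticipate is purely bookkeeping: matching the constants in the tail decay of \eqref{u-uniform-tail} with the density bound of Remark~\ref{rem:Leb-Hm*ln-l0} so that the conversion between integrals on $\hat\gamma_n$ and $\hat\gamma_0$ through $H_n$ produces the same $\varepsilon$-sized estimate on both sides, and making sure that the reverse matching (short $R_n$-returns not coming from any $\ell(i)$) is automatically absorbed into the symmetric differences already chosen.
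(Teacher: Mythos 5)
Your argument is correct and rests on exactly the same two pillars as the paper's proof --- the uniform tail \eqref{u-uniform-tail} (together with Remark~\ref{rem:Leb-Hm*ln-l0} to transfer $\leb_{\hat\gamma_n}$-estimates to $\leb_{\hat\gamma_0}$ through $H_n$) and the matching of $s$-sublattices \eqref{u-matching-s-sublattices} with $k=1$ --- but the bookkeeping is organized differently. The paper decomposes the \emph{function}: it writes $R_n=\sum_{j\ge0}\I_{\{R_n>j\}}$, splits $\|R_n-R_0\|_1$ into two tail sums plus the finite sum $\sum_{j=0}^{N-1}\|\I_{\{R_n\le j\}}-\I_{\{R_0\le j\}}\|_1$, and controls the latter by making $\l0(\{R_n=j\}\triangle\{R_0=j\})$ small via \eqref{u-matching-s-sublattices}. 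You instead decompose the \emph{domain} into the long-return set $D_1$ (killed by the tails) and the short-return set $D_2$, where you argue cylinder by cylinder using the finiteness of $\{i: R_0(\Upsilon_i^0)\le N\}$ from \eqref{P-Markov}(c). The two routes are of comparable length, but yours has one small advantage: the worry you raise at the end about ``reverse matching'' is in fact moot in your setup, since any point of $D_2$ where the two return times disagree lies, by disjointness of the $\Upsilon^n_m$ and injectivity of $H_n$, in the symmetric difference $(\Upsilon_i^0\cap\hat\gamma_0)\triangle H_n(\Upsilon_{\ell(i)}^n\cap\hat\gamma_n)$ for its own index $i$ with $R_0(\Upsilon_i^0)\le N$ --- so only the finitely many matched symmetric differences already controlled by \eqref{u-matching-s-sublattices} are needed; the paper's one-line claim that $\l0(\{R_n=j\}\triangle\{R_0=j\})$ is small hides essentially this same case analysis (with points of $\{R_n\le j\}\setminus\{R_0\le j\}$ having $R_0>N$ absorbed into the tail). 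The only genuinely loose point, the conversion between integrals against $\ln$ and against $\l0$ via the density of $(H_n)_*\ln$, is one the paper also dispatches by citing Remark~\ref{rem:Leb-Hm*ln-l0}, and you flag it explicitly, so no gap there either.
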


\begin{proof}
Let $\varepsilon>0$ be given. Using condition \eqref{u-uniform-tail}
and
Remark~\ref{rem:Leb-Hm*ln-l0}, take $N\geq 1$ and $J=J(N,\varepsilon)>0$ in such a way
that $\sum_{j=N}^\infty j\l0 \{R_n=j\}<\varepsilon/3$ and
$\sum_{j=N}^\infty j\l0 \{R_0=j\}<\varepsilon/3$. Since
$$R_n=\sum_{j=0}^{\infty}\I_{\{R_n>j\}},$$
 we may write
 \begin{eqnarray*}
 \|R_n-R_0\|_1
 &=&
 \big\|R_n
 -\sum_{j=0}^{N-1}\I_{\{R_n>j\}}
 +\sum_{j=0}^{N-1}\big(\I_{\{R_n>j\}}
 -\I_{\{R_0>j\}}\big)
 +\sum_{j=0}^{N-1}\I_{\{R_0>j\}}
 -R_0\big\|_1 \\
 &\leq&
 \big\|
 \sum_{j=N}^{\infty}\I_{\{R_n>j\}}\big\|_1
 +\sum_{j=0}^{N-1}\|\I_{\{R_n>j\}}
    -\I_{\{R_0>j\}}\|_1
 +
 \big\|\sum_{j=N}^{\infty}\I_{\{R_0>j\}}
 \big\|_1\\
 &=&\big\|
 \sum_{j=N}^{\infty}\I_{\{R_n> j\}}\big\|_1
 +\sum_{j=0}^{N-1}\|\I_{\{R_n\leq j\}}
    -\I_{\{R_0\leq j\}}\|_1
 +
 \big\|\sum_{j=N}^{\infty}\I_{\{R_0>j\}}
 \big\|_1.
 \end{eqnarray*}
 By the choices of $N$ and $J$,
 the first and third terms in this last sum are  smaller than
 $\varepsilon/3$.
 By \eqref{u-matching-s-sublattices}, increasing $J$ if necessary, we can make
 $\l0\left(\{R_{n}=j\} \triangle
\{R_{0}=j\}\right)$ sufficiently small in order to have the second
term smaller than $\epsilon/3$.
\end{proof}

\subsection{Convergence of metric
entropies}\label{subsec:conv-entropies}

Our aim is to show that $h_{\mu_{n}}\to h_{\mu_0}$ as $n\to \infty$, which by
Proposition~\ref{prop:entropy-formula}  can be rewritten as
\begin{equation}\label{eq.sigma}
\sigma_n^{-1}\int_{\bar\Lambda_n}\log
J\bar F_n\,d\bar \mu_n\longrightarrow \sigma_0^{-1}\int_{\bar\Lambda_0}\log J\bar F_0\,d\bar
\mu_0,\quad\text{as $n\to\infty$}.
\end{equation}
 Observing that $\sigma_n=\int_{\Lambda_n}R_n d\tilde\mu_n=\mu_n^*(M)$,
then by
Proposition~\ref{prop:convergence-SRB-measures} we have $\sigma_n\to \sigma_0$, as $n\to \infty$. Hence, \eqref{eq.sigma} is a consequence of the next result.

\begin{proposition}\label{prop:convergence-entropy}
$\displaystyle\int_{\bar\Lambda_n}\log
J\bar F_n\,d\bar \mu_n\longrightarrow \int_{\bar\Lambda_0}\log J\bar F_0\,d\bar
\mu_0$ as $n\to\infty$.
\end{proposition}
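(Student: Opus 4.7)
The plan is to push both integrals to the reference unstable leaf $\hat\gamma_0$, truncate the unbounded integrand using the bound $\log J\bar F_n\le C_2 R_n$ from Lemma~\ref{lem:aux-lemma-bound-JF}, and then combine the $L^1$-convergence of the truncated integrands on the leaf with the weak* convergence $\varrho_n\to\varrho_0$ supplied by Corollary~\ref{co:conv-densities-weak*}. The decomposition of the difference $\int\log J\bar F_n\,d\bar\mu_n-\int\log J\bar F_0\,d\bar\mu_0$ will split into three pieces: (i) tail contributions from $\{R_n>N\}$ and $\{R_0>N\}$; (ii) the error arising from transferring integration against $\bar\mu_n$ to integration against $\varrho_n\l0$ via Lemma~\ref{lem:rel-integration-unstab-leaves}; (iii) the main term comparing $\log JF_n\circ H_n^{-1}$ with $\log JF_0$ on the truncated region.

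For (i), given $\varepsilon>0$ I would use \eqref{u-uniform-tail} together with the uniform bound $\bar\rho_n\le K$ of Lemma~\ref{lem:small-adjust} to choose $N$ and $J_0$ so that for every $n>J_0$
\[
\int_{\{R_n>N\}}\log J\bar F_n\,d\bar\mu_n\le C_2 K\sum_{j>N} j\,\ln(\{R_n=j\})<\varepsilon,
\]
and similarly for $n=0$. Setting $\phi_n^N:=\log J\bar F_n\cdot\I_{\{R_n\le N\}}$, which is bounded by $C_2 N$, Lemma~\ref{lem:rel-integration-unstab-leaves} then yields
\[
\int_{\bar\Lambda_n}\phi_n^N\bar\rho_n\,d\bar m_n=\int_{\Omega_0^n}\psi_n^N\varrho_n\,d\l0+O(N\, Q_n),
\]
where $\psi_n^N:=\phi_n^N\circ\hat\pi_n\circ H_n^{-1}$, and $Q_n\to 0$ by \eqref{u-matching-cantor-sets} and Remark~\ref{rem:Leb-Hm*ln-l0}.

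The heart of the argument is to prove, for each fixed $N$, the $L^1(\l0)$-convergence $\psi_n^N\to\psi_0^N$ as $n\to\infty$. Since $\log JF_n$ is constant along each $\gamma_n^s$ by Lemma~\ref{lem:jac-def}(1), the restriction of $\phi_n^N\circ\hat\pi_n$ to $\hat\gamma_n\cap\Lambda_n$ coincides with $\log JF_n\cdot\I_{\{R_n\le N\}}$. By \eqref{P-Markov}(c) only finitely many sublattices $\Upsilon_i^0$ satisfy $R_0(\Upsilon_i^0)\le N$, and \eqref{u-matching-s-sublattices} pairs each such $\Upsilon_i^0$ with a sublattice $\Upsilon_{\ell(i)}^n$ of equal return time whose image under $H_n$ differs from $\Upsilon_i^0\cap\hat\gamma_0$ by a set of arbitrarily small $\l0$-measure. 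On the matched region I would invoke the telescoped formula
\[
\log JF_n(x)=\sum_{j=0}^{R_n-1}\varphi_n(f_n^j(x))+\sum_{j=0}^\infty\Bigl(\varphi_n(f_n^j(F_n(x)))-\varphi_n(f_n^j(\widehat{F_n(x)}))\Bigr),
\]
with $\varphi_n=\log|\det Df_n^u|$, derived in the proof of Lemma~\ref{lem:jac-def}; the infinite tail is truncated using the uniform geometric decay in \eqref{P-regularity-s-foliation}(a), while the remaining finite sums are estimated via the $C^k$ proximity of $f_n\to f_0$ from Remark~\ref{rem:iterates-cont} together with the $C^1$ proximity of stable leaves in \eqref{u-proximity-stable-direction} (which ensures $\widehat{F_n(x)}\to\widehat{F_0(x)}$). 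On the mismatched set $\{R_n\le N\}\triangle\{R_0\le N\}$ the integrand is bounded by $C_2 N$ and the $\l0$-measure of the set vanishes by \eqref{u-matching-s-sublattices} and Lemma~\ref{le.impu1}.

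Once this $L^1$-convergence is in hand, the passage to the limit follows the standard splitting
\[
\int\psi_n^N\varrho_n\,d\l0-\int\psi_0^N\varrho_0\,d\l0=\int(\psi_n^N-\psi_0^N)\varrho_n\,d\l0+\int\psi_0^N(\varrho_n-\varrho_0)\,d\l0;
\]
the first summand vanishes by Lemma~\ref{lem:l1l1} together with $\varrho_n\le K$, and the second vanishes by Corollary~\ref{co:conv-densities-weak*} since $\psi_0^N\in L^1(\l0)$. Letting $N\to\infty$ closes the argument. The hard step is clearly the $L^1$-convergence of $\log JF_n\circ H_n^{-1}$ to $\log JF_0$ on matched sublattices: it is there that the infinite tail encoded in $\hat u$ must be controlled simultaneously with dynamical proximity, and the uniform constants of \eqref{u-uniform-constants} together with the finite-order $C^k$ estimates of Remark~\ref{rem:iterates-cont} are precisely what make this feasible.
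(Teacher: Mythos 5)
Your proposal is correct and follows essentially the same route as the paper's proof: reduce everything to the reference leaf $\hat\gamma_0$ via Lemma~\ref{lem:rel-integration-unstab-leaves} and Lemma~\ref{lem:small-adjust}, discard long returns using \eqref{u-uniform-tail} and the bound $\log J\bar F_n\le C_2R_n$ of Lemma~\ref{lem:aux-lemma-bound-JF}, compare $\log JF_n\circ H_n^{-1}$ with $\log JF_0$ on matched sublattices of bounded return time using \eqref{u-matching-s-sublattices}, the truncation of $\hat u$ from \eqref{P-regularity-s-foliation}(a) and the proximity statements (Remark~\ref{rem:chato}, Lemma~\ref{lem:bar-F0-bar-Fni-proximity}), and handle the density factor with Corollary~\ref{co:conv-densities-weak*} -- exactly the ingredients of the paper's $E_0,E_1,E_2,E_3$ decomposition and of Lemma~\ref{lem:control-E1-entropy}. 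The only deviation is organizational: you truncate the integrand by $\I_{\{R_n\le N\}}$ at the outset so that Lemma~\ref{lem:rel-integration-unstab-leaves} applies to a bounded function, whereas the paper keeps the unbounded integrand and absorbs the tails inside its error terms; this is a harmless (arguably cleaner) rearrangement, not a different argument.
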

\begin{proof}
The convergence above will follow if we show that the following term
is arbitrarily small for large $n\in\N$.
\[
E:=\left|\int_{\Omega_n}(\log J\bar F_n\circ\hat\pi_n)(\bar\rho_n\circ\hat\pi_n)\,d\ln-
\int_{\Omega_0}(\log J\bar F_0\circ\hat\pi_0)\varrho_0\,d\l0\right|.
\]
Recall that $\varrho_0=\bar\rho_0\circ\hat\pi_0$ and
$\varrho_n=\bar\rho_n\circ\hat\pi_n\circ H_n^{-1}$, for every
$n\in\N$. Define
\begin{equation*}
E_0:=\left|\int_{\Omega^n_0\cap\Omega_0}(\log J\bar F_n\circ\hat\pi_n\circ H_n^{-1})\varrho_n\,d(H_n)_*\ln-
\int_{\Omega^n_0\cap\Omega_0}(\log J\bar F_0\circ\hat\pi_0)\varrho_0\,d\l0\right|.
\end{equation*}
By Lemmas~\ref{lem:small-adjust} and \ref{lem:aux-lemma-bound-JF} we
have
$$
E\leq E_0+KC_2\int_{\Omega_n\setminus\Omega_n^0}R_n d\ln+KC_2
\int_{\Omega_0\setminus\Omega^n_0}R_0d\l0.
$$
Since $R_0\in L^1(\l0)$, then, by \eqref{u-matching-cantor-sets} and
Remark~\ref{rem:Leb-Hm*ln-l0}, for large $n$, we may have
$\l0(\Omega_0\triangle\Omega^n_0)$ small so that $
\int_{\Omega_0\setminus\Omega^n_0}R_0d\l0$ becomes negligible.
Now, for each $N\in\N$
\[
\int_{\Omega_n\setminus\Omega_n^0}R_n d\ln\leq
N\int_{\Omega_n\setminus\Omega_n^0}
d\ln+\int_{\{R_n>N\}}R_nd\ln.
\]
Using condition \eqref{u-uniform-tail} we may choose $N$ so that for
all $n\in\N$ large enough the quantity
$\int_{\{R_n>N\}}R_nd\ln=\sum_{j=N+1}j\l0\{R_n=j\}$ is arbitrarily
small. Again, using \eqref{u-matching-cantor-sets}, if $n\in\N$ is
sufficiently large then $\int_{\Omega^n_0\setminus\Omega_0}d\l0$
is as small as we want. Therefore, we are reduced to estimating
$E_0$.

Note that by definition $\Omega^n_0\subset\Omega_0$. Having this
in mind, we split $E_0$ into the next three terms that we call
$E_1,E_2,E_3$ respectively.
\begin{align*}
E_0&\leq \left|\int_{\Omega^n_0}
(\log J\bar F_n\circ\hat\pi_n\circ H_n^{-1})\varrho_n\,d(H_n)_*\ln-
\int_{\Omega^n_0}
(\log J\bar F_0\circ\hat\pi_0)\varrho_n\,d(H_n)_*\ln\right|\\
&\quad+\left|\int_{\Omega^n_0}
(\log J\bar F_0\circ\hat\pi_0)\varrho_n\,d(H_n)_*\ln-\int_{\Omega^n_0}
(\log J\bar F_0\circ\hat\pi_0)\varrho_n\,d\l0 \right|\\
&\quad +\left|\int_{\Omega^n_0}
(\log J\bar F_0\circ\hat\pi_0)\varrho_n\,d\l0-\int_{\Omega^n_0}
(\log J\bar F_0\circ\hat\pi_0)\varrho_0\,d\l0\right|.
\end{align*}
Concerning $E_2$, using Lemma~\ref{lem:small-adjust} and
Lemma~\ref{lem:aux-lemma-bound-JF} we have
\begin{align*}
E_2&\leq\int_{\Omega^n_0}|\log J\bar F_0||\varrho_n|\left|\frac{d(H_n)_*\ln}{d\l0}-1\right|d\l0\\
& \leq K C_2 \int_{\Omega^n_0}
R_0\left|\frac{d(H_n)_*\ln}{d\l0}-1\right|d\l0.
\end{align*}
Now, Remark~\ref{rem:Leb-Hm*ln-l0} and Lemma~\ref{lem:l1l1}
guarantee that $E_2$ can be made arbitrarily small for
 large $n\in\N$.
Using Corollary~\ref{co:conv-densities-weak*},
$E_3$ can also be made small for large $n$.
We are left with $E_1$. By Lemma~\ref{lem:small-adjust} and Remark~\ref{rem:Leb-Hm*ln-l0}
we only need to control
$$
\int_{\Omega^n_0\cap\Omega_0}\left|(\log J\bar F_n\circ\hat\pi_n\circ H_n^{-1})-(\log J\bar F_0\circ\hat\pi_0)\right|\,d\l0
$$ whose estimation
we leave to Lemma~\ref{lem:control-E1-entropy}.
\end{proof}

\begin{remark} \label{rem:chato}
Assume that $\gamma_n$ is a
 compact unstable  manifold of the map $f_n$  for $n\ge 0$ and
$\gamma_n\to\gamma_0$, in the $C^1$ topology. The convergence of $f_n$ to $f_0$ in the $C^1$ topology ensures that given $\ell\in\N$ and
$\epsilon>0$ there exist $\delta=\delta(\ell,\epsilon)>0$ and
$J=J(\delta)\in\N$ such that for every $n>J$, $x\in\gamma_0$ and
$y\in\gamma_n$ with $|x-y|<\delta$
\begin{equation*}\label{eq:estimate-2}
\max_{j=1,\ldots,\ell}\bigg\{|f^{j}_n(y)-f^{j}_0(x)|,\,\, |\log\det(Df^{j}_n)^u(y)
-\log\det(Df^{j}_0)^u(x)|\bigg\} <\epsilon.
\end{equation*}
\end{remark}

\begin{lemma}\label{lem:control-E1-entropy}
Given any $\varepsilon>0$ there exists $J\in\N$ such that for every
$n>J$
\[
\int_{\Omega^n_0\cap\Omega_0}\left|(\log J\bar F_n\circ\hat\pi_n\circ H_n^{-1})-(\log J\bar F_0\circ\hat\pi_0)\right|\,d\l0<\varepsilon.
\]
\end{lemma}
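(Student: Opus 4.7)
The plan is to reduce the estimate to controlling $\log JF_n$ versus $\log JF_0$ on the corresponding pieces of the reference leaves, using the decomposition \eqref{eq:jacobians-rel} together with the matching of $s$-sublattices and the $C^1$ proximity of iterates.

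First I would write, using Lemma~\ref{lem:jac-def}(1) and formula \eqref{eq:jacobians-rel},
$$\log J\bar F_n\circ\hat\pi_n(x)=\log JF_n(x)=\log|\det DF_n^u(x)|+\log\hat u_n(F_n(x))-\log\hat u_n(x),$$
for $x\in\hat\gamma_n\cap\Lambda_n$, and observe that since $x\in\hat\gamma_n$, by \eqref{eq:def-u-hat} $\hat u_n(x)=1$; so the last term vanishes and we are left with two terms to compare. Next, I would truncate by fixing $N\in\N$ large and splitting the domain as
$$\Omega^n_0\cap\Omega_0=\bigcup_{\ell=1}^{N}\bigl(\{R_n\circ H_n^{-1}=\ell\}\cap\{R_0=\ell\}\bigr)\cup L,$$
where $L$ collects the leftover (points whose $R_n$ or $R_0$ exceeds $N$, or where $R_n\circ H_n^{-1}\neq R_0$). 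By Lemma~\ref{lem:aux-lemma-bound-JF} the integrand is bounded by $C_2(R_n\circ H_n^{-1}+R_0)$, so by \eqref{u-uniform-tail}, Remark~\ref{rem:Leb-Hm*ln-l0} and \eqref{u-matching-s-sublattices} (applied at level $k=1$), choosing $N$ then $J$ large makes $\int_L$ arbitrarily small.

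On each piece $\{R_n\circ H_n^{-1}=\ell\}\cap\{R_0=\ell\}$, I would further refine via \eqref{u-matching-s-sublattices} applied at $k=2$: matching the pairs $\Upsilon_{i_0,i_1}^0$ with the corresponding $\Upsilon_{\ell_0,\ell_1}^n$ for $R_0(\Upsilon^0_{i_0})\le N$ and $R_0(\Upsilon^0_{i_1})\le N$. This controls both $F_n(x)$ and $F_0(x)$: they land in matched pieces, so $\widehat{F_n(H_n^{-1}(x))}\in\hat\gamma_n$ and $\widehat{F_0(x)}\in\hat\gamma_0$ lie in matched $\Upsilon$-sets whose symmetric difference has small $\l0$-measure. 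On this bulk, I would compare term by term:

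\emph{The $\det$-term.} Since $R_n\circ H_n^{-1}=R_0=\ell\le N$ on these pieces, Remark~\ref{rem:iterates-cont} gives $\bigl|\log|\det DF_n^u(H_n^{-1}(x))|-\log|\det DF_0^u(x)|\bigr|<\varepsilon'$ uniformly for $n$ large.

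\emph{The $\hat u$-term.} Write $\log\hat u_n(F_n(H_n^{-1}(x)))=\sum_{i=0}^\infty\bigl[\varphi_n(f_n^i(F_n(H_n^{-1}(x))))-\varphi_n(f_n^i(\widehat{F_n(H_n^{-1}(x))}))\bigr]$, where $\varphi_n=\log|\det Df_n^u|$. Property \eqref{P-regularity-s-foliation}(a) bounds the tail $\sum_{i\ge M}$ by $C\beta^M$ uniformly in $n$, so it is enough to control the finite sum up to some large but fixed $M$. For the truncated sum I would use: (i) the $C^1$ proximity $f_n\to f_0$ together with Remark~\ref{rem:iterates-cont}, giving closeness of $f_n^{\ell+i}(H_n^{-1}(x))$ to $f_0^{\ell+i}(x)$ and of the Jacobian logarithms; and (ii) the stable foliation proximity \eqref{u-proximity-stable-direction}, which yields $\widehat{F_n(H_n^{-1}(x))}\to\widehat{F_0(x)}$ along the converging unstable leaves $\hat\gamma_n\to\hat\gamma_0$, hence also the closeness of $f_n^i(\widehat{F_n(H_n^{-1}(x))})$ to $f_0^i(\widehat{F_0(x)})$.

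Putting this together, on the bulk the integrand is pointwise bounded by a small constant (times a finite number of comparable pieces), which gives an integral bound $<\varepsilon/2$ for $n$ large; combined with the tail estimate $<\varepsilon/2$ this yields the lemma. The main obstacle is step (ii): tracking that the projection $x\mapsto\widehat{F_n(x)}$ onto the moving reference leaf $\hat\gamma_n$ is continuous in $n$. This is precisely what the combination of \eqref{u-prox-unstable-direction}, \eqref{u-proximity-stable-direction} and Remark~\ref{rem:Leb-Hm*ln-l0} was designed to handle, so the bookkeeping should close cleanly.
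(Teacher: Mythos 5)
Your proposal follows essentially the same route as the paper's proof: discard the large-return-time part using Lemma~\ref{lem:aux-lemma-bound-JF} together with \eqref{u-uniform-tail} and \eqref{u-matching-s-sublattices}, then compare $\log J\bar F_n\circ\hat\pi_n\circ H_n^{-1}$ and $\log J\bar F_0\circ\hat\pi_0$ pointwise on matched pieces via \eqref{eq:jacobians-rel} (with $\hat u=1$ on the reference leaves), handling the $\det DF^u$ term by $C^1$ proximity of boundedly many iterates and the $\hat u$ term by truncating the infinite product with \eqref{P-regularity-s-foliation}(a). The projection-continuity step you flag as the main obstacle is precisely Lemma~\ref{lem:bar-F0-bar-Fni-proximity}, already proved in Section~3, so your argument closes; your only deviations are cosmetic (splitting by return-time level with a $k=2$ use of \eqref{u-matching-s-sublattices}, where the paper works with the sets $\Upsilon_i^0\cap\Upsilon_i^n$ and needs only one-level matching).
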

\begin{proof}
Let $\varepsilon>0$ be given. For $n,N\in\N$ define
$A_{n,N}=\{R_n\leq N\}\cap\{R_0\leq N\}$ and
$A_{n,N}^c=\{R_n> N\}\cup\{R_0> N\}$. By
Lemma~\ref{lem:aux-lemma-bound-JF} we have
\begin{multline*}
\int_{\Omega^n_0\cap A^c_{n,N}}\left|(\log J\bar F_n\circ\hat\pi_n\circ H_n^{-1})-(\log J\bar F_0\circ\hat\pi_0)
\right|\,d\l0\leq
C_2\int_{\Omega^n_0\cap A^c_{n,N}}R_n\,d\l0\\+
C_2\int_{\Omega^n_0\cap A^c_{n,N}}R_0\,d\l0.
\end{multline*}
Since $R_0\in L^1(\l0)$, there is $\delta>0$ such that if a
measurable set $A$ has $\l0(A)<\delta$, then $\int_A
R_0d\l0<\varepsilon/(4C_2)$. According to \eqref{u-uniform-tail}, we
may pick $N\in\N$ and choose $J\in\N$ such that for every $n>J$ we
get $\l0( A^c_{n,N})<\delta$. This implies that the second term on
the right hand side of the inequality above is smaller than
$\varepsilon/4$. The same argument and Lemma~\ref{le.impu1} allow us
to conclude that for a convenient choice of $N\in\N$ and for
$J\in\N$ sufficiently large
\[
C_2\int_{\Omega^n_0\cap A^c_{n,N}}R_n\,d\l0\leq
C_2\int_{\Omega^n_0\cap A^c_{n,N}}R_0\,d\l0+
C_2\int_{\Omega^n_0}|R_n-R_0|\,d\l0\leq\frac\varepsilon4.
\]
So, assuming that $N$ has been chosen and $J$ is sufficiently large so
that
\[
\int_{\Omega^n_0\cap A^c_{n,N}}\left|(\log J\bar F_n\circ\hat\pi_n\circ H_n^{-1})-(\log J\bar F_0\circ\hat\pi_0)\right|\,d\l0\leq \varepsilon/2,
\]
we are left do deal with
\begin{multline*}
\int_{\Omega^n_0\cap  A_{n,N}}\left|(\log J\bar F_n\circ\hat\pi_n\circ H_n^{-1})-(\log J\bar F_0\circ\hat\pi_0)\right|\,d\l0\leq\\
\sum_{i:R_0(\Upsilon_i^0)\leq
N}\int_{\Upsilon_i^0\cap\Upsilon_i^n}\left|(\log J\bar F_n\circ\hat\pi_n\circ H_n^{-1})-(\log J\bar F_0\circ\hat\pi_0)\right|\I_{\Omega^n_0\cap\Omega_0}\,d\l0\\
+\sum_{i:R_0(\Upsilon_i^0)\leq
N}\int_{\Upsilon_i^0\triangle\Upsilon_i^n}\left|(\log J\bar F_n\circ\hat\pi_n\circ H_n^{-1})-(\log J\bar F_0\circ\hat\pi_0)
\right|\I_{\Omega^n_0\cap\Omega_0\cap  A_{n,N}}\,d\l0.
\end{multline*}
Denote by $S_1$ and $S_2$ respectively the first and second sums
above, and $v$  the number of terms in $S_1$ and $S_2$.
By Lemma~\ref{lem:aux-lemma-bound-JF} we have
 $$
 S_2\le
C_2\int_{\Upsilon_i^0\triangle\Upsilon_i^n}(R_n+R_0)
\I_{\Omega^n_0\cap\Omega_0\cap  A_{n,N}}\,d\l0\leq
2C_2N\l0(\Upsilon_i^0\triangle\Upsilon_i^n).
 $$
Hence, using \eqref{u-matching-s-sublattices} we consider $J\in\N$
large enough to have
$\l0(\Upsilon_i^0\triangle\Upsilon_i^n)<\varepsilon/(8C_2Nv)$, and
so $S_2\leq \varepsilon/4$.

Let $\tau_i=R_0(\Upsilon_i^0)=R_n(\Upsilon_i^n)\leq N$. We want to see that for all $n$ large enough and all
$x\in\Upsilon_i^0\cap\Upsilon_i^n$ with $\tau_i\leq N$
\begin{equation}\label{eq:final-goal}
\left|(\log J\bar F_n\circ\hat\pi_n\circ H_n^{-1})(x)-(\log J\bar F_0\circ\hat\pi_0)(x)\right|\leq \varepsilon/4v,
\end{equation}
which yields $S_1\leq\varepsilon/4$.
 Using \eqref{eq:jacobians-rel} and observing
that the curves $\hat\gamma_n,\hat\gamma_0$ are the leaves we chose
to define the reference measures $\bar m_n, \bar m_0$, then we
easily get for $y=H^{-1}_n(x)$
\begin{align*}
\left|\log J\bar F_n\circ\hat\pi_n (y)-\log J\bar F_0\circ\hat\pi_0(x)\right|&\leq
\left|\log\det(Df^{\tau_i}_n)^u(y)
-\log\det(Df^{\tau_i}_0)^u(x)\right|\\
&\quad+|\log\hat u_n(f_n^{\tau_i}(y))-\log\hat
u_0(f_0^{\tau_i}(x))|.
\end{align*}
Using Remark~\ref{rem:chato} with $\ell=N$ and
$\varepsilon/8v$ instead of $\epsilon$, and recalling that $\tau_i\leq N$, we may find
$\delta>0$ and $J\in\N$ so that for all $n>J$
\begin{equation}\label{eq:estimate-3}
\left|\log\det(Df^{\tau_i}_n)^u(y)
-\log\det(Df^{\tau_i}_0)^u(x)\right|<\varepsilon/8v.
\end{equation}
Observe that $|x-y|<\delta$ as long as $J$ is sufficiently large, since $x=H_n(y)$.

For every $n,k\in\N_0$ and $t\in\Lambda_n$, let $$\hat
u_n^{k}(t)=\prod_{j=0}^{k}\frac{\det Df_n^u(f_n^j(t))}{\det
Df_n^u(f_n^j(\hat t))}.$$ By definition of $\hat u_n$ (see
\eqref{eq:def-u-hat}) and by \eqref{P-regularity-s-foliation}(a),
 there is $k\in\N$ such that for every
$n\in\N_0$ and $t\in\Lambda_n$ we have $|\log\hat u_n(t)-\log\hat
u_n^{k}(t)|<\varepsilon/(48v)$. Thus,
\begin{eqnarray*}
|\log\hat u_n(f_n^{\tau_i}(y))-\log\hat
u_0(f_0^{\tau_i}(x))|&\leq &|\log\hat u_n(f_n^{\tau_i}(y))-\log\hat
u_n^{k}(f_n^{\tau_i}(y))|\\
 &\quad&+|\log\hat u_n^{k}(f_n^{\tau_i}(y))-\log\hat
u_0^{k}(f_0^{\tau_i}(x))|\\
 &\quad&+|\log\hat u_0^{k}(f_0^{\tau_i}(x))-\log\hat
u_0(f_0^{\tau_i}(x))|\\
&\leq&\sum_{j=0}^{k}\left| \log\det
Df_n^u(f_n^j(\zeta))-\log\det Df_0^u(f_0^j(z))\right|\\
&\quad&+\sum_{j=0}^{k}\left| \log\det Df_n^u(f_n^j(\hat\zeta))-\log\det
Df_0^u(f_0^j(\hat z))\right|\\
&\quad&+\frac\varepsilon{24v},
\end{eqnarray*}
where $z=f_0^{\tau_i}(x)$, $\zeta=f_n^{\tau_i}(y)$, $\hat z$ is the
only point on the set $\gamma_0^s(z)\cap\hat\gamma_0$ and
$\hat\zeta$ is the unique point on the set
$\gamma_n^s(\zeta)\cap\hat\gamma_n$.

Observe that since $\hat\gamma_n\to\hat\gamma_0$ and $f_n\to f_0$ in
the $C^1$ topology, and $\tau_i\leq N$, then
$\gamma_n^u(\zeta)\to\gamma_0^u(z)$, in the $C^1$ topology. 
Besides, using
Lemma~\ref{lem:bar-F0-bar-Fni-proximity}
 we also have $|\hat z-\hat\zeta|$ as small as we want for $J$ large enough. Consequently, by Remark~\ref{rem:chato}, we may
find $J\in\N$
sufficiently large so that for all $n>J$, we have
\begin{equation}\label{eq:estimate-4}
\sum_{j=0}^{k}\left| \log\det Df_n^u(f_n^j(\zeta))-\log\det
Df_0^u(f_0^j(z))\right|<\varepsilon/(24v).
\end{equation}
and
\begin{equation}\label{eq:estimate-5}
\sum_{j=0}^{k}\left| \log\det Df_n^u(f_n^j(\hat\zeta))-\log\det
Df_0^u(f_0^j(\hat z))\right|<\varepsilon/(24v).
\end{equation}
Estimates
\eqref{eq:estimate-3},\eqref{eq:estimate-4} and
\eqref{eq:estimate-5} yield \eqref{eq:final-goal}.
\end{proof}

\bibliographystyle{alpha}

\end{document}